\theoremstyle{plain}
\newtheorem{thm}{Theorem}[section]
\newtheorem{lemma}[thm]{Lemma}
\newtheorem{fact}[thm]{Fact}
\newtheorem{obs}[thm]{Observation}
\newtheorem{cor}[thm]{Corollary}
\newtheorem{prop}[thm]{Proposition}
\newtheorem*{question}{Question}
\theoremstyle{definition}
\theoremstyle{remark}
\def\R{\mathbb{R}}
\def\A{\mathcal{A}}
\def\B{\mathcal{B}}
\def\C{\mathcal{C}}
\def\D{\mathcal{D}}
\def\F{\mathcal{F}}
\def\Ho{\mathcal{H}}
\def\I{\mathcal{I}}
\def\La{\mathcal{L}}
\def\S{\mathcal{S}}
\def\V{\mathcal{V}}
\def\Av{\textnormal{Av}}
\def\gr{\textnormal{gr}}
\def\c{\circ}
\title{Composability of Permutation Classes}
\author{Mark Karpilovskij\thanks{This work was supported by the Neuron Fund for Support of Science} \\ karpilo@iuuk.mff.cuni.cz
\\ Computer Science Institute of Charles University \\ Malostranské nám. 25, 118 00 Prague 1, Czech Republic}
\date{}
\begin{document}
\maketitle

\begin{abstract}
We define the operation of composing two hereditary classes of permutations
 using the standard composition of permutations as functions and we explore
 properties and structure of permutation classes considering this operation.
 We mostly concern ourselves with the problem of whether permutation classes
 can be composed from their proper subclasses. We provide examples of classes which can be composed from two proper subclasses,
 classes which can be composed from three but not from two proper subclasses and classes which cannot be composed
from any finite number of proper subclasses.
\end{abstract}
%%% Each chapter is kept in a separate file
\section{Introduction}

Permutations of numbers or other finite sets are a~very deeply and frequently studied
combinatorial and algebraic object. There are two main structures on permutations investigated
in modern mathematics: groups, closed under the composition operator, and hereditary pattern-avoiding classes,
closed under the relation of containment. This paper is one of several texts
exploring the relation between the two notions by applying the composition operator to
permutation classes. That is, given two classes $\A$ and $\B$, we denote by $\A \c \B$ the
class of all permutations which can be written as a~composition of a~permutation from $\A$
and a~permutation from $\B$.

The oldest results combining permutation classes and groups that we know of are due
to Atkinson and Beals \cite{AtkinsonBeals01}, who consider the permutation classes
whose permutations of length $n$ form a~subgroup of $S_n$ for every $n$ and completely
characterise the types of groups which may occur this way. These results were recently
refined and extended by Lehtonen and Pöschel \cite{Lehtonen16, LehtonenPoschel16}.
In an earlier version of their paper, Atkinson and Beals \cite{AtkinsonBeals99} also deal with composing permutation classes,
showing that compositions of many pairs of finitely based classes are again finitely based.

Some permutation classes characterise permutations which can be sorted
by some sorting machine such as a~stack. In this view, a~composition of two permutation
classes can characterise permutations sortable by two corresponding sorting machines connected
serially. For example, Atkinson and Stitt \cite[Section 6.4]{AtkinsonStitt02} introduce 
the pop-stack, a~sorting machine which sorts precisely the layered permutations (see Section \ref{ch4} for a~definition),
and consider the class of permutations which can be sorted by two pop-stacks in series, i.e.
which can be written as a~composition of two layered permutations.
Using their more general results they calculate its generating function and enumerate its basis.

Albert et al. \cite{AlbertEtAl07} give more enumerative results on compositions of classes
in terms of sorting machines. 

In the present paper, we study a~different question connected to compositions of classes; 
namely whether a~permutation of a~given class $\C$ can always be written as a~composition
of two or more permutations from its subclasses, i.e. whether $\C \subseteq \C_1 \c \C_2 \c \cdots \c \C_k$
for some $\C_1, \ldots, \C_k \subsetneq \C$. If this is true, we say that the class $\C$ is \emph{composable}
and we refer to this property of $\C$ as \emph{composability}.

The paper is organised as follows. In Section \ref{ch1} we supply all the necessary
definitions and facts about permutation classes. In Section \ref{ch2}
we introduce composability and give some basic results. In Section \ref{ch3}
we explore composability of the class $\Av(k\cdots21)$. In Section \ref{ch4} we explore
composability of various classes of layered patterns. Finally in Section \ref{ch5}
we give several additional miscellaneous results.

\section{Preliminaries} \label{ch1}

For a~positive integer $n$ we let $[n]$ denote the set $\{1, 2, \ldots, n\}$. A~\emph{permutation of order}
$n$ is a~bijective function $\pi \colon [n] \longrightarrow [n]$. We
denote the order of a~permutation $\pi$ by $|\pi|$.
We may also interpret a~permutation $\pi$ as a~sequence $\pi(1), \pi(2), \ldots, \pi(n)$
of distinct elements of $[n]$, or as a~diagram in an $n \times n$ square in the plane, 
consisting of points $\{(i,\pi(i)); 1 \leq i \leq n\}$. For $n \geq 0$
let $\S_n$ denote the set of all permutations of order $n$.

If $\pi$ and $\sigma$ are two permutations of order $n$ we define their \emph{composition} $\pi \c \sigma$ 
as $(\pi \c \sigma)(i) = \pi(\sigma(i))$ for every $i \in [n]$.

We define two more permutation operators. The \emph{sum} $\pi \oplus \sigma$
of permutations $\pi \in \S_k$ and $\sigma \in \S_l$ is the permutation 
$\pi(1), \pi(2), \ldots, \pi(k), \sigma(1) + k, \sigma(2) + k, \ldots, \sigma(l) + k.$
The \emph{skew sum} $\pi \ominus \sigma$ is the permutation
$\pi(1) + l, \pi(2) + l, \ldots, \pi(k) + l, \sigma(1), \sigma(2), \ldots, \sigma(l).$
For example, $3127645 = 312 \oplus 4312$ and $6547123 = 3214 \ominus 123$ (see Figure \ref{fig_sum}).

\begin{figure}[h] 
  \centering
  \subfloat[][$3127645 = 312 \oplus 4312$] {
    \begin{tikzpicture}[line cap=round,line join=round,>=triangle 45,x=0.4cm,y=0.4cm]
    \clip(0.42,0.52) rectangle (15.46,15.45);
    \draw (1,1)-- (15,1);
    \draw (1,1)-- (1,15);
    \draw (1,15)-- (15,15);
    \draw (15,15)-- (15,1);
    \draw (19,1)-- (33,1);
    \draw (33,1)-- (33,15);
    \draw (33,15)-- (19,15);
    \draw (19,15)-- (19,1);
    \draw (1,7)-- (15,7);
    \draw (19,7)-- (33,7);
    \draw (7,1)-- (7,15);
    \draw (27,15)-- (27,1);
    \begin{scriptsize}
    \fill [color=black] (2,6) circle (2pt);
    \fill [color=black] (4,2) circle (2pt);
    \fill [color=black] (6,4) circle (2pt);
    \fill [color=black] (8,14) circle (2pt);
    \fill [color=black] (10,12) circle (2pt);
    \fill [color=black] (12,8) circle (2pt);
    \fill [color=black] (14,10) circle (2pt);
    \fill [color=black] (20,12) circle (2pt);
    \fill [color=black] (22,10) circle (2pt);
    \fill [color=black] (24,8) circle (2pt);
    \fill [color=black] (26,14) circle (2pt);
    \fill [color=black] (28,2) circle (2pt);
    \fill [color=black] (30,4) circle (2pt);
    \fill [color=black] (32,6) circle (2pt);
    \end{scriptsize}
    \end{tikzpicture}
  }
  \subfloat[][$6547123 = 3214 \ominus 123$] {
    \begin{tikzpicture}[line cap=round,line join=round,>=triangle 45,x=0.4cm,y=0.4cm]
    \clip(18.45,0.52) rectangle (33.56,15.45);
    \draw (1,1)-- (15,1);
    \draw (1,1)-- (1,15);
    \draw (1,15)-- (15,15);
    \draw (15,15)-- (15,1);
    \draw (19,1)-- (33,1);
    \draw (33,1)-- (33,15);
    \draw (33,15)-- (19,15);
    \draw (19,15)-- (19,1);
    \draw (1,7)-- (15,7);
    \draw (19,7)-- (33,7);
    \draw (7,1)-- (7,15);
    \draw (27,15)-- (27,1);
    \begin{scriptsize}
    \fill [color=black] (2,6) circle (2pt);
    \fill [color=black] (4,2) circle (2pt);
    \fill [color=black] (6,4) circle (2pt);
    \fill [color=black] (8,14) circle (2pt);
    \fill [color=black] (10,12) circle (2pt);
    \fill [color=black] (12,8) circle (2pt);
    \fill [color=black] (14,10) circle (2pt);
    \fill [color=black] (20,12) circle (2pt);
    \fill [color=black] (22,10) circle (2pt);
    \fill [color=black] (24,8) circle (2pt);
    \fill [color=black] (26,14) circle (2pt);
    \fill [color=black] (28,2) circle (2pt);
    \fill [color=black] (30,4) circle (2pt);
    \fill [color=black] (32,6) circle (2pt);
    \end{scriptsize}
    \end{tikzpicture}
  }
  \caption{An example of sums and skew sums}
  \label{fig_sum}
\end{figure}
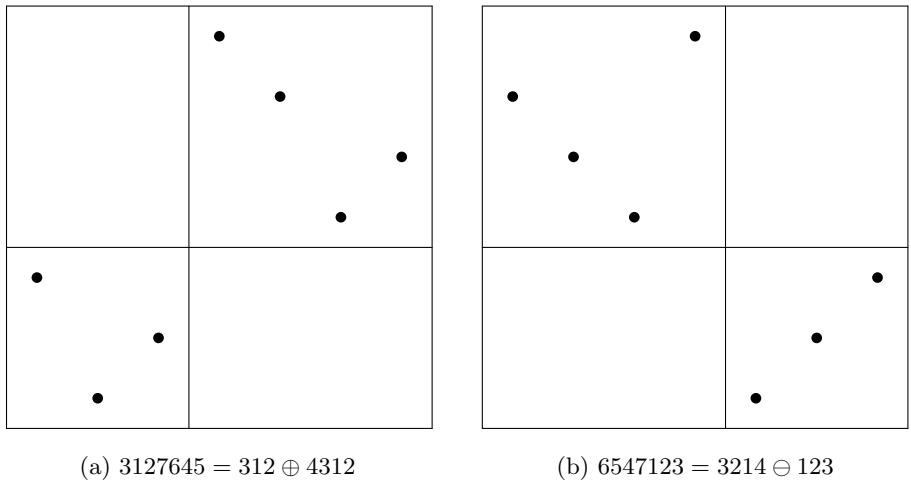

In addition, we will sometimes write $\pi_1 \oplus \pi_2 \oplus \cdots \oplus \pi_k$ as $\bigoplus_{i=1}^k\pi_i$.
 
\subsection{Permutation classes}

Two sequences of numbers $s_1, s_2, \ldots, s_n$ and $r_1, r_2, \ldots, r_n$ are
\emph{order-isomorphic} if for any two indices $i,j \in [n]$ it holds that $s_i < s_j$ 
if and only if $r_i < r_j$.

We define the following partial ordering on the set of all permutations.
We say that $\pi$ is \emph{contained in} $\sigma$ and write $\pi \leq \sigma$ if 
$\sigma$ has a~subsequence of length $|\pi|$ order-isomorphic to $\pi$.
See the example of containment in Figure \ref{fig_contain}. On the other hand,
if $\pi \nleq \sigma$, we say that $\sigma$ \emph{avoids} $\pi$.

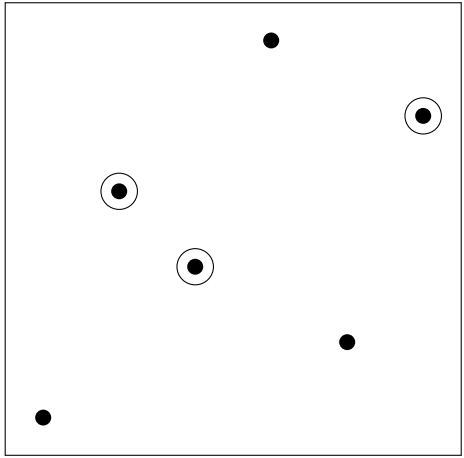
\begin{figure}[ht] 
  \centering
  \begin{tikzpicture}[line cap=round,line join=round,>=triangle 45,x=0.5cm,y=0.5cm]
    \clip(0.69,0.64) rectangle (13.4,13.35);
    \draw (1,1)-- (1,13);
    \draw (1,1)-- (13,1);
    \draw (13,1)-- (13,13);
    \draw (13,13)-- (1,13);
    \draw(4,8) circle (0.24cm);
    \draw(6,6) circle (0.24cm);
    \draw(12,10) circle (0.24cm);
    \begin{scriptsize}
    \fill [color=black] (2,2) circle (3pt);
    \fill [color=black] (4,8) circle (3pt);
    \fill [color=black] (6,6) circle (3pt);
    \fill [color=black] (8,12) circle (3pt);
    \fill [color=black] (10,4) circle (3pt);
    \fill [color=black] (12,10) circle (3pt);
    \end{scriptsize}
  \end{tikzpicture}
  \caption{The permutation 213 is contained in 143625.}
  \label{fig_contain}
\end{figure}

A set $\C$ of permutations is called a~\emph{permutation class} if for every $\pi \in \C$ and every $\sigma \leq \pi$
we have $\sigma \in \C$. We say that $\C$ \emph{avoids} a~permutation $\sigma$ if $\sigma \notin \C$, i.e. 
every $\pi \in \C$ avoids $\sigma$.
Permutation classes are often described by the patterns they avoid. If $B$ is any set of permutations,
we denote by $\Av(B)$ the set of all permutations avoiding every element of $B$.
Observe that $\C$ is a~permutation class if and only if $\C = \Av(B)$ for some set $B$. 
Indeed, if $\C$ is a~permutation class, then $\C = \Av(\S \setminus \C)$,
and if $\sigma \leq \pi \in \C$, then $\pi$ avoids all permutations of $B$ and clearly $\sigma$ avoids them too.
If $\C = \Av(B)$ and $B$ is an anti-chain with respect to containment, we call $B$ the \emph{basis} of $\C$.
Also if $B = \{\pi_1, \pi_2, \ldots, \pi_k\}$ is finite, we write just $\Av(\pi_1, \ldots, \pi_k)$
instead of $\Av(\{\pi_1, \ldots, \pi_k\})$. Finally, if $\C = \Av(\pi)$ for a~single
permutation $\pi$, we say that $\C$ is a~\emph{principal class}.

Let $s_1, s_2, \ldots, s_k$ be $k$ finite sequences of numbers. We denote their concatenation
by $s_1s_2\cdots s_k$. 
If a~sequence $s$ can be constructed by interleaving $s_1, s_2, \ldots, s_k$ in some (not necessarily unique) way,
we say that $s$ is a~\emph{merge of} or it is \emph{merged from} $s_1, s_2, \ldots, s_k$.

We define $\I_k$ resp. $\D_k$ to be the class of all permutations merged from at most $k$ increasing resp. 
decreasing subsequences. Also let $\I = \I_1$ and $\D=\D_1$, i.e. $\I = \Av(21)$ is the set of all increasing
permutations and $\D = \Av(12)$ is the set of all decreasing permutations, and for convenience let
$\I_0 = \D_0 = \S_0$. 

The classes $\I_k$ and $\D_k$ are well-known examples of principal classes.

\begin{fact}[Vatter \cite{Vatter15}] \label{basicfact}
$\I_{k-1} = \Av(k\cdots21)$ and $\D_{k-1} = \Av(12\cdots k)$ for any positive integer $k$.
\qed
\end{fact}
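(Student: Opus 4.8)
The plan is to prove the first equality $\I_{k-1} = \Av(k\cdots21)$ by a direct double inclusion, and then obtain $\D_{k-1} = \Av(12\cdots k)$ for free by symmetry. For the symmetry step I would use the reversal map $\pi \mapsto \pi^r$, where $\pi^r(i) = \pi(|\pi|+1-i)$: it carries each increasing subsequence of $\pi$ to a decreasing subsequence of $\pi^r$ and vice versa, so $\pi \in \I_{k-1}$ if and only if $\pi^r \in \D_{k-1}$; it also carries the pattern $k\cdots21$ to $12\cdots k$, so $\pi \in \Av(k\cdots21)$ if and only if $\pi^r \in \Av(12\cdots k)$. As reversal is an involution, the first equality then transfers to the second. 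The degenerate case $k=1$ is simply $\S_0 = \Av(1)$, so in what follows I would assume $k \geq 2$.

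For the inclusion $\I_{k-1} \subseteq \Av(k\cdots21)$, I would note that if $\pi$ is merged from increasing subsequences $s_1,\ldots,s_{k-1}$ (some possibly empty), then a subsequence of $\pi$ order-isomorphic to $k\cdots21$ is strictly decreasing and hence contains at most one entry from each $s_j$, because two entries lying in a common $s_j$ occur in increasing order. Such a decreasing subsequence has length at most $k-1$, so $\pi$ avoids $k\cdots21$.

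For the reverse inclusion $\Av(k\cdots21) \subseteq \I_{k-1}$ --- the substance of the proof --- I would use the standard greedy-labelling argument. Given $\pi \in \S_n$ avoiding $k\cdots21$, assign to each position $i \in [n]$ the label $f(i)$ equal to the maximal length of a decreasing subsequence of $\pi$ whose final entry is $\pi(i)$. Avoidance of $k\cdots21$ forces $f(i) \in \{1,\ldots,k-1\}$ for every $i$. The crucial point is a monotonicity property: if $i<j$ and $\pi(i) > \pi(j)$, then appending $\pi(j)$ to a longest decreasing subsequence ending at $\pi(i)$ shows $f(j) \geq f(i)+1$; equivalently, two positions carrying the same label can never be in decreasing order, so the entries in the positions of a fixed label class $f^{-1}(m)$, read left to right, form an increasing subsequence. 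Hence $\pi$ is a merge of the $k-1$ increasing subsequences supported on $f^{-1}(1),\ldots,f^{-1}(k-1)$, giving $\pi \in \I_{k-1}$.

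I do not anticipate a genuine obstacle --- the reverse inclusion is essentially Mirsky's theorem applied to the containment order on a single permutation --- but the step needing the most care is checking that the label function $f$ is well defined with all values in $\{1,\ldots,k-1\}$ and that its monotonicity property really does force each label class to be an increasing subsequence; once this is established, the merge and both inclusions follow at once.
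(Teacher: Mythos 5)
Your proof is correct. Note that the paper does not prove this statement at all: it is quoted as a known fact from Vatter's survey and marked \qed, so there is no in-paper argument to compare against. What you supply is exactly the standard proof of this classical equivalence (Mirsky/Dilworth applied to a permutation): the easy direction via the observation that a decreasing subsequence meets each increasing piece at most once, the substantive direction via the greedy labelling $f(i)=$ length of the longest decreasing subsequence ending at $\pi(i)$, whose level sets are increasing subsequences, and the transfer to $\D_{k-1}=\Av(12\cdots k)$ by the reversal involution. All steps, including the degenerate case $k=1$ under the paper's convention $\I_0=\D_0=\S_0$, check out.
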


Next we recall a~known and important property of infinite permutation classes which will become
useful in the upcoming sections.

\begin{fact}[Atkinson, Beals \cite{AtkinsonBeals01}] \label{fact_infinite}
Let $\C$ be an infinite permutation class. Then either $\I \subseteq \C$ or $\D \subseteq \C$.
\qed
\end{fact}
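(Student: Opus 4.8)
The plan is to deduce this from the classical Erd\H{o}s--Szekeres theorem. First I would note that $\C$ must contain permutations of arbitrarily large order: since each $\S_n$ is finite, a class all of whose members have order at most $N$ would be contained in the finite set $\bigcup_{n=0}^{N}\S_n$ and hence finite. So for every positive integer $m$ there is some $\pi \in \C$ with $|\pi| > (m-1)^2$.

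Next I would invoke Erd\H{o}s--Szekeres in the form: every sequence of more than $(m-1)^2$ distinct real numbers has a monotone subsequence of length $m$. Applying this to such a $\pi$ viewed as the sequence $\pi(1),\pi(2),\ldots,\pi(|\pi|)$, we get that $\pi$ contains $12\cdots m$ or it contains $m\cdots 21$; since $\C$ is closed downward under $\leq$, the same holds for $\C$ itself. Hence for every $m$, at least one of the two permutations $12\cdots m$ and $m\cdots 21$ belongs to $\C$.

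Finally, a pigeonhole argument finishes it: one of the two alternatives must occur for infinitely many values of $m$. Suppose $12\cdots m \in \C$ for infinitely many $m$. Because $12\cdots j \leq 12\cdots m$ whenever $j \leq m$, downward closure then forces $12\cdots m \in \C$ for \emph{every} $m$; as $12\cdots m$ is the unique permutation of order $m$ in $\I$, this yields $\I \subseteq \C$. If instead $m\cdots 21 \in \C$ for infinitely many $m$, the symmetric argument gives $\D \subseteq \C$.

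I do not expect any genuine obstacle here; the only point requiring a little care is the passage from ``infinitely many $m$'' to ``all $m$'', which works precisely because the increasing (resp. decreasing) permutations form a chain under containment containing exactly one permutation of each order, so that each of $\I$ and $\D$ is the downward closure of any one of its sufficiently long members.
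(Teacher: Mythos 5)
Your proof is correct: infinitude of $\C$ forces permutations of arbitrarily large order, Erd\H{o}s--Szekeres then yields $12\cdots m$ or $m\cdots 21$ in $\C$ for every $m$, and your pigeonhole plus downward-closure step legitimately upgrades ``infinitely many $m$'' to $\I \subseteq \C$ or $\D \subseteq \C$. The paper gives no proof of its own here --- it states this as a fact cited to Atkinson and Beals --- and your argument is exactly the standard proof of that result, so it matches the intended justification.
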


\subsection{Splittability}

In this section we shortly introduce another concept which has been recently used
to derive enumerative results on permutation classes and which we will also utilize in our work.

A permutation $\pi$ is \emph{merged from permutations} $\alpha$ and $\beta$ if we can
color the elements of $\pi$ with red and blue such that the red subsequence is order-isomorphic
to $\alpha$ and the blue sequence is order-isomorphic to $\beta$. Given two 
permutation classes $\A$ and $\B$ we define their \emph{merge} denoted by $\A \odot \B$
as the class of all permutations which can be merged from a~(possibly empty) permutation from $\A$
and a~(possibly empty) permutation from $\B$. For example, it is easy to see that
\begin{equation*} \I_k = \underbrace{\I \odot \I \odot \cdots \odot \I}_{k\times}. \end{equation*}

We say that a~class $\C$ is \emph{splittable} if it has two proper subclasses $\A$ and $\B$ such that $\C \subseteq \A \odot \B$.
We refer the reader to the work of Jelínek and Valtr \cite{JelinekValtr13} for an exhaustive study of splittability.

\section{The notion of composability} \label{ch2}

In the following sections we provide definitions of the key notions of this work
as well as basic facts and observations.

\subsection{Composing permutation classes}

We define the \emph{composition} of two permutation classes $\A$ and $\B$
as the set $\A \c \B = \{ \pi \c \varphi; \pi \in \A, \varphi \in \B, |\pi| = |\varphi|\}$. 

\begin{lemma} Let $\A$ and $\B$ be arbitrary permutation classes.

\begin{enumerate}[(a)]
\item  $\A \c \B$ is again a~permutation class.
\item Composing permutation classes is associative, i.e. $(\A \c \B) \c \C = \A \c (\B \c \C)$.
\end{enumerate}

\end{lemma}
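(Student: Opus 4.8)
The plan is to prove both parts directly from the definitions, using the fact that permutations act as bijections and that order-isomorphism behaves well under restriction to subsequences.

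For part (a), I must show that $\A \c \B$ is downward closed under containment. So let $\pi \c \varphi \in \A \c \B$ with $\pi \in \A$, $\varphi \in \B$, $|\pi| = |\varphi| = n$, and suppose $\tau \leq \pi \c \varphi$ with $|\tau| = k$. Then $\tau$ is order-isomorphic to the subsequence of $\pi \c \varphi$ on some index set $I = \{i_1 < i_2 < \cdots < i_k\} \subseteq [n]$. The key step is to find a $k$-element permutation pair realizing $\tau$ as a composition. I would let $J = \varphi(I) = \{\varphi(i_1), \ldots, \varphi(i_k)\}$, let $\varphi'$ be the pattern of $\varphi$ induced on the rows $I$ — i.e. the permutation of order $k$ order-isomorphic to $\varphi(i_1), \ldots, \varphi(i_k)$ — and let $\pi'$ be the pattern of $\pi$ induced on the columns $J$. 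Since $\pi' \leq \pi \in \A$ we have $\pi' \in \A$, and similarly $\varphi' \in \B$. The heart of the argument is then the identity $\pi' \c \varphi' = \tau$: unwinding the order-isomorphisms, if $\mathrm{rk}_J$ denotes the rank function $J \to [k]$ and $\mathrm{rk}_I \colon I \to [k]$ likewise, then $\varphi'(\mathrm{rk}_I(i_j)) = \mathrm{rk}_J(\varphi(i_j))$ and $\pi'(\mathrm{rk}_J(\varphi(i_j))) = \mathrm{rk}_{\varphi(I)}\big(\pi(\varphi(i_j))\big)$ — wait, I need the columns of $\pi$ that get used to be exactly $J$, which holds because $\varphi$ is a bijection and $\varphi(I) = J$ — so the composite sends $\mathrm{rk}_I(i_j)$ to the rank of $(\pi\c\varphi)(i_j)$ among $\{(\pi\c\varphi)(i_1), \ldots, (\pi\c\varphi)(i_k)\}$, which is exactly $\tau(j)$. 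I would write this out carefully as a chain of equalities; the only subtlety is bookkeeping the three rank functions correctly.

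For part (b), associativity, both sides are easily seen to be $\{\rho \c \sigma \c \varphi : \rho \in \A, \sigma \in \B, \varphi \in \C, |\rho| = |\sigma| = |\varphi|\}$. For $(\A \c \B) \c \C$: an element is $\mu \c \varphi$ where $\mu \in \A \c \B$ and $|\mu| = |\varphi|$; writing $\mu = \rho \c \sigma$ with $|\rho| = |\sigma| = |\mu| = |\varphi|$ gives the claimed form, using associativity of function composition. The reverse inclusion and the mirror computation for $\A \c (\B \c \C)$ are identical. The one point to note is that the length constraints match up: since in $\A \c \B$ we only compose equal-length permutations, $|\mu| = |\rho| = |\sigma|$, so requiring $|\mu| = |\varphi|$ is the same as requiring all of $|\rho|, |\sigma|, |\varphi|$ equal.

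The main obstacle is purely the first part: making the induced-pattern composition identity in (a) precise without drowning in notation for the rank/normalization maps. Once one fixes the convention that the pattern of $\pi$ on a row set $I$ is read off by also restricting to the column set $\pi(I)$, everything lines up, and (b) is then routine.
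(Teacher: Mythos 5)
Your proof is correct and takes essentially the same approach as the paper: given $\tau$ contained in $\pi \c \varphi$ at positions $I$, the paper likewise composes the pattern of $\varphi$ at positions $I$ with the pattern of $\pi$ at positions $\varphi(I)$, and disposes of associativity by appealing to associativity of function composition. (Your intermediate formula $\mathrm{rk}_{\varphi(I)}\bigl(\pi(\varphi(i_j))\bigr)$ should read the rank within $\pi(\varphi(I))$, but the corrected conclusion you state immediately afterwards is the right one, so the argument stands.)
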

\begin{proof}
Let $\alpha \c \beta = \pi \in \A \c \B$, so that $\alpha \in \A$ and $\beta \in \B$.
Then a~permutation contained in $\pi$ at indices $i_1 < \cdots < i_r$ is composed
of $\alpha' \leq \alpha$ and $\beta' \leq \beta$ such that $\beta'$ is contained at indices
$i_1, \ldots, i_r$ in $\beta$ and $\alpha'$ is contained at indices $\beta(i_1), \ldots, \beta(i_2)$ in $\alpha$.
Associativity follows from associativity of permutation composition.
\end{proof}

Having verified associativity of the composition operator we can now define the composition of more than two classes in a~natural 
inductive way:
\begin{equation*} \C_1 \c \C_2 \c \cdots \c \C_k = (\C_1 \c \C_2 \c \cdots \c \C_{k-1}) \c \C_k. \end{equation*}
We will also sometimes use the power notation $\underbrace{\C \c \C \c \cdots \c \C}_{k \times} = (\C)^k$.

We continue by proving several simple lemmas about composing permutations merged from few increasing sequences.
\begin{lemma} \label{lemma_kl} $\I_k \c \I_l \subseteq \I_{kl}$ for any integers $k,l \geq 0$.
\end{lemma}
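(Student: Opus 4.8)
The plan is to use the combinatorial characterisation of $\I_m$: a permutation lies in $\I_m$ precisely when its positions can be coloured with $m$ colours so that each colour class forms an increasing subsequence. So I would start by fixing $\alpha \in \I_k$ and $\beta \in \I_l$ together with such colourings: a function $a \colon [n] \to [k]$ witnessing $\alpha \in \I_k$ (each fibre $a^{-1}(s)$ is a set of positions on which $\alpha$ is increasing) and a function $b \colon [n] \to [l]$ witnessing $\beta \in \I_l$, where $n = |\alpha| = |\beta|$.

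The heart of the argument is to colour the composition $\pi = \alpha \c \beta$ with the \emph{product} of the two colourings. Concretely, assign to position $i \in [n]$ the colour $c(i) = (a(\beta(i)), b(i)) \in [k] \times [l]$, which uses $kl$ colours in total. I would then verify that each colour class is increasing in $\pi$: if $i < j$ and $c(i) = c(j)$, then first $b(i) = b(j)$ forces $\beta(i) < \beta(j)$ since $\beta$ restricted to that fibre is increasing; and then $a(\beta(i)) = a(\beta(j))$ together with $\beta(i) < \beta(j)$ forces $\alpha(\beta(i)) < \alpha(\beta(j))$, i.e. $\pi(i) < \pi(j)$. Hence $\pi$ is merged from at most $kl$ increasing subsequences, so $\pi \in \I_{kl}$.

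Finally I would dispose of the degenerate cases $k = 0$ or $l = 0$ separately (or simply note the argument is vacuously fine): since $\I_0 = \S_0$ and composition is only defined between permutations of equal order, $\I_0 \c \I_l = \S_0 = \I_0$, matching $\I_{0 \cdot l}$. There is no real obstacle here; the only thing to get right is the choice of coloring — pairing the $\beta$-colour of position $i$ with the $\alpha$-colour of position $\beta(i)$, rather than of position $i$ — and the bookkeeping that the two monotonicity conditions chain correctly through the substitution $i \mapsto \beta(i)$.
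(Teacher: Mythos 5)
Your proof is correct and is essentially the paper's argument: the paper also combines the partition of $\beta$ into $l$ increasing subsequences with the partition of $\alpha$ into $k$ increasing subsequences, splitting the image of each increasing piece of $\beta$ into at most $k$ increasing pieces, which is exactly your product colouring $c(i) = (a(\beta(i)), b(i))$. Your explicit verification of the chained monotonicity through $i \mapsto \beta(i)$ just makes the same reasoning slightly more formal.
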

\begin{proof}
Choose $\pi \in \I_k$ and $\varphi \in \I_l$, partition $\varphi$ into $l$ increasing sequences and choose
one of them at indices $i_1 < \cdots < i_r$. Then $\varphi(i_1) < \cdots < \varphi(i_r)$ and so
$\pi(\varphi(i_1)), \ldots, \pi(\varphi(i_r))$ is a~subsequence of $\pi$ and therefore it can be partitioned
into at most $k$ increasing sequences since that is the property of $\pi$. This is true for the image of
each of the $l$ increasing subsequences in $\varphi$ and therefore $\pi \c \varphi$ can
be partitioned into at most $k\cdot l$ increasing subsequences.
\end{proof}

Since $\D \c \D = \I$, the argument of the previous proof can be repeated to show that $\D_k \c \D_l \subseteq \I_{kl}$.
We can generalise this even more.
\begin{lemma} \label{lemma_extrakl} Let $k,l,m,n$ be any non-negative integers. Then
\begin{equation*} (\I_k \odot \D_m) \c (\I_l \odot \D_n) \subseteq \I_{kl + mn} \odot \D_{kn + ml}. \end{equation*}
\end{lemma}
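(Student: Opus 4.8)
The plan is to mimic the proof of Lemma \ref{lemma_kl}, tracking separately what happens to increasing and decreasing blocks under composition. Take $\pi \in \I_k \odot \D_m$ and $\varphi \in \I_l \odot \D_n$ with $|\pi| = |\varphi|$. By definition of merge, fix a colouring partitioning $\pi$ into $k$ increasing sequences and $m$ decreasing sequences, and likewise partition $\varphi$ into $l$ increasing sequences $\varphi_1, \dots, \varphi_l$ and $n$ decreasing sequences $\psi_1, \dots, \psi_n$. I want to partition $\pi \c \varphi$ into few increasing and few decreasing sequences by looking, for each block of $\varphi$, at the image of its index set under the value sequence of $\varphi$ and then intersecting with the blocks of $\pi$.

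First I would record the basic observation driving everything: if $S$ is an increasing subsequence of $\varphi$ at indices $i_1 < \cdots < i_r$, then $\varphi(i_1) < \cdots < \varphi(i_r)$, so $\pi(\varphi(i_1)), \dots, \pi(\varphi(i_r))$ is a genuine subsequence of $\pi$ (in the order the indices appear); restricting this subsequence to one of the $k$ increasing blocks of $\pi$ gives an increasing sequence, and restricting to one of the $m$ decreasing blocks gives a decreasing sequence. Hence the part of $\pi \c \varphi$ sitting on the indices of $S$ is merged from $k$ increasing and $m$ decreasing sequences. Symmetrically, if $S$ is a \emph{decreasing} subsequence of $\varphi$ at indices $i_1 < \cdots < i_r$, then $\varphi(i_1) > \cdots > \varphi(i_r)$, so reading $\pi(\varphi(i_1)), \dots, \pi(\varphi(i_r))$ in index order is the \emph{reverse} of a subsequence of $\pi$; intersecting with an increasing block of $\pi$ now yields a decreasing sequence, and intersecting with a decreasing block of $\pi$ yields an increasing sequence. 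So the part of $\pi \c \varphi$ on the indices of a decreasing block of $\varphi$ is merged from $m$ increasing and $k$ decreasing sequences.

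Next I would simply add up the contributions over all blocks of $\varphi$. The $l$ increasing blocks of $\varphi$ together contribute at most $kl$ increasing and $ml$ decreasing sequences to $\pi \c \varphi$; the $n$ decreasing blocks of $\varphi$ together contribute at most $mn$ increasing and $kn$ decreasing sequences. Since the index sets of all these blocks partition $[|\varphi|]$, concatenating the corresponding pieces of $\pi \c \varphi$ (each colour class kept as one class across all blocks is fine, since a class is allowed to be any increasing or decreasing subsequence, not necessarily built from consecutive positions) realises $\pi \c \varphi$ as a merge of at most $kl + mn$ increasing sequences and at most $kn + ml$ decreasing sequences, i.e. $\pi \c \varphi \in \I_{kl+mn} \odot \D_{kn+ml}$, as claimed.

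The only subtle point — and the one I would be careful to state cleanly — is the sign bookkeeping in the second half of the basic observation: that composing the decreasing-block behaviour of $\varphi$ with an increasing block of $\pi$ produces a \emph{decreasing} sequence and with a decreasing block of $\pi$ produces an \emph{increasing} sequence, which is exactly the source of the swapped roles of $k,m$ versus $l,n$ in the two summands. Once that is pinned down, the rest is routine counting; note also that the previously remarked identities $\D \c \D = \I$ (and more generally $\D_k \c \D_l \subseteq \I_{kl}$) fall out as the special cases $k = l = 0$, which is a useful sanity check on the formula.
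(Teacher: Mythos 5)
Your proof is correct and is essentially the argument the paper intends: its proof of Lemma~\ref{lemma_extrakl} consists of the single instruction to repeat the argument of Lemma~\ref{lemma_kl}, and you carry that out by intersecting each monotone block of $\varphi$ with each monotone block of $\pi$, with the sign bookkeeping correctly producing $kl+mn$ increasing and $kn+ml$ decreasing classes. The only wording to tighten is the parenthetical about keeping ``one class across all blocks'': the valid count is one class per pair of blocks (which is what your totals reflect), since merging pieces coming from different blocks of $\varphi$ for a fixed block of $\pi$ would in general not be monotone.
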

\begin{proof}
Use the approach identical to that of Lemma \ref{lemma_kl}.
\end{proof}

\subsection{Composability}

The main problem we are addressing in this work is whether permutations in a~given permutation class
can be constructed by composing permutations from two or more smaller classes. We formalise this as follows.
A permutation class $\C$ is said to be \emph{composable from classes} $\C_1, \ldots, \C_k$ if
$\C \subseteq \C_1 \c \cdots \c \C_k$. A~class $\C$ is \emph{$k$-composable}, if it is composable from its $k$
proper subclasses $\C_1, \ldots, \C_k$. A~class $\C$ is \emph{composable}, if it is $k$-composable for some $k \geq 2$.
Using this terminology, our goal is thus answering the question whether a~given permutation class is composable.

Clearly, for every class $\C$ we have $\C \subseteq \C \c \I$. For an infinite class we have either $\I \subseteq \C$,
which implies $\C \subseteq \C \c \C$, or $\D \subseteq \C$, which implies $\I \subseteq \C \c \C$ and $\C \subseteq \C \c \C \c \C$.
Restricting ourselves to proper subclasses in the definition of a~composable class is motivated
by these trivial inclusions.

We begin the exploration of composability by proving the following result which implies that unlike splittability,
$k$-composability for $k > 2$ does not imply $2$-composability.

\begin{thm} \label{evencomposable}
Let $\C$ be an infinite permutation class such that $\I \nsubseteq \C$. Then $\C$
is not $2k$-composable for any positive integer $k$.
\end{thm}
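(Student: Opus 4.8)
The plan is to combine Fact~\ref{fact_infinite} with the ``width'' estimates on compositions. Since $\C$ is infinite and $\I \nsubseteq \C$, Fact~\ref{fact_infinite} gives $\D \subseteq \C$, so every decreasing permutation lies in $\C$. The goal is then to show that, on the other hand, a composition of $2k$ subclasses of $\C$ can only contain decreasing permutations of bounded length, which is impossible.

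First I would convert the hypothesis $\I \nsubseteq \C$ into a concrete bound. Because $\I = \{1, 12, 123, \dots\}$ is a chain under containment, there is a least integer $M$ with $12\cdots M \notin \C$, and $M \ge 2$ since $\C$ is infinite (so $1 \in \C$). As $\C$ is a permutation class, it avoids $12\cdots M$, hence by Fact~\ref{basicfact} $\C \subseteq \Av(12\cdots M) = \D_{d}$ with $d := M-1 \ge 1$. Consequently, in any hypothetical witnessing decomposition $\C \subseteq \C_1 \c \cdots \c \C_{2k}$ with each $\C_i \subsetneq \C$, we have $\C_i \subseteq \C \subseteq \D_{d}$ for every $i$.

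Next I would estimate $(\D_d)^{2k}$. Grouping the $2k$ factors of $\D_d \c \D_d \c \cdots \c \D_d$ into $k$ consecutive pairs and using the inclusion $\D_a \c \D_b \subseteq \I_{ab}$ recorded right after Lemma~\ref{lemma_kl} (the iterated form of $\D \c \D = \I$), each pair lies in $\I_{d^2}$; then composing these $k$ copies of $\I_{d^2}$ by repeated application of Lemma~\ref{lemma_kl} together with associativity yields $(\D_d)^{2k} \subseteq \I_{d^{2k}}$. Therefore $\C \subseteq \C_1 \c \cdots \c \C_{2k} \subseteq (\D_d)^{2k} \subseteq \I_{d^{2k}}$, and by Fact~\ref{basicfact} this last class equals $\Av(N\cdots21)$, where $N := d^{2k}+1$.

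This is already the contradiction: the decreasing permutation $N\cdots21$ belongs to $\D \subseteq \C$ but is exactly the pattern forbidden by $\I_{d^{2k}}$, so $\C \not\subseteq \I_{d^{2k}}$. The one point requiring care is the parity of the number of factors: composing an \emph{odd} number of copies of $\D_d$ only gives a subclass of some $\D_N$, which still contains all of $\D$, so no contradiction arises there. Thus the argument genuinely uses that we compose an even number $2k$ of classes, which is why the theorem is stated only for $2k$-composability; I do not expect any other genuine obstacle beyond organizing these inclusions correctly.
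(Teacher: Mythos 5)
Your proof is correct and follows essentially the same route as the paper: bound $\C$ inside some $\D_d$ via Fact~\ref{basicfact}, pair up the $2k$ factors to land in $\I_{d^2}$ and then in $\I_{d^{2k}}$, and contradict $\D \subseteq \C$ from Fact~\ref{fact_infinite}. The only cosmetic difference is that you invoke the remark $\D_a \c \D_b \subseteq \I_{ab}$ together with Lemma~\ref{lemma_kl}, where the paper cites the more general Lemma~\ref{lemma_extrakl} for both steps.
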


\begin{proof}
Since $\C$ does not contain $\I$, there is an integer $n$ such that $\C$ avoids $12\cdots n(n+1)$
and therefore $\C \subseteq \D_n$ by Fact \ref{basicfact}.

Now let $\A_1, \B_1, \A_2, \B_2, \ldots, \A_k, \B_k$ be proper subclasses of $\C$
and suppose that $\C \subseteq \A_1 \c \B_1 \c \A_2 \c \B_2 \c \cdots \c \A_k \c \B_k$.
Since all these classes are subsets of $\D_n$, Lemma \ref{lemma_extrakl}
implies $\A_i \c \B_i \subseteq \I_{n^2}$ for every $i \in [k]$.
Using Lemma \ref{lemma_extrakl} again we get that 
  
\begin{equation*}\A_1 \c \B_1 \c \A_2 \c \B_2 \c \cdots \c \A_k \c \B_k
\subseteq \I_{n^2} \c \cdots \c \I_{n^2} \subseteq \I_{n^{2k}},\end{equation*}
therefore, according to our assumption, $\C \subseteq \I_{n^{2k}}$, which means that $\C$ 
does not contain a~decreasing permutation of length $n^{2k}+1$ by Fact \ref{basicfact}. 
But since $\C$ is infinite and does not contain $\I$, it has to contain $\D$
according to Fact \ref{fact_infinite}, which is a~contradiction.
\end{proof}

\subsection{Properties of symmetries}
In this section we explore how composability is preserved under
some of the usual symmetrical maps.

For a~permutation $\pi$ of length $n$ we define $\pi^r$ to be the \emph{reverse} of $\pi$, i.e. $\pi^r(k) = \pi(n-k+1)$,
and $\pi^c$ to be the \emph{complement} of $\pi$, i.e. $\pi^c(k) = n-\pi(k)+1$.
For a~permutation class $\A$ we define 
the \emph{inverse class} 
$\A^{-1} = \{\pi^{-1}; \pi \in \A\}$, 
the \emph{reverse class} $\A^r = \{\pi^r; \pi \in \A\}$,  and the \emph{complementary class} $\A^c = \{\pi^c; \pi \in \A\}$.

\begin{figure}[h]
\centering
\subfloat[][$14352$] {
\begin{tikzpicture}[line cap=round,line join=round,>=triangle 45,x=0.6cm,y=0.6cm]
\clip(-0.18,-0.72) rectangle (5.14,4.7);
\draw (0,4.5)-- (0,-0.5);
\draw (0,-0.5)-- (5,-0.5);
\draw (5,-0.5)-- (5,4.5);
\draw (5,4.5)-- (0,4.5);
\draw (5.5,4.5)-- (5.5,-0.5);
\draw (5.5,-0.5)-- (10.5,-0.5);
\draw (10.5,-0.5)-- (10.5,4.5);
\draw (10.5,4.5)-- (5.5,4.5);
\draw (11,4.5)-- (11,-0.5);
\draw (11,-0.5)-- (16,-0.5);
\draw (16,-0.5)-- (16,4.5);
\draw (16,4.5)-- (11,4.5);
\draw (16.5,4.5)-- (16.5,-0.5);
\draw (16.5,-0.5)-- (21.5,-0.5);
\draw (21.5,-0.5)-- (21.5,4.5);
\draw (21.5,4.5)-- (16.5,4.5);
\begin{scriptsize}
\fill [color=black] (0.5,0) circle (2.0pt);
\fill [color=black] (1.5,3) circle (2.0pt);
\fill [color=black] (2.5,2) circle (2.0pt);
\fill [color=black] (3.5,4) circle (2.0pt);
\fill [color=black] (4.5,1) circle (2.0pt);
\fill [color=black] (6,0) circle (2.0pt);
\fill [color=black] (8,2) circle (2.0pt);
\fill [color=black] (9,1) circle (2.0pt);
\fill [color=black] (7,4) circle (2.0pt);
\fill [color=black] (10,3) circle (2.0pt);
\fill [color=black] (15.5,0) circle (2.0pt);
\fill [color=black] (14.5,3) circle (2.0pt);
\fill [color=black] (13.5,2) circle (2.0pt);
\fill [color=black] (12.5,4) circle (2.0pt);
\fill [color=black] (11.5,1) circle (2.0pt);
\fill [color=black] (17,4) circle (2.0pt);
\fill [color=black] (17.5,1) circle (2.0pt);
\fill [color=black] (19,2.08) circle (2.0pt);
\fill [color=black] (20,0) circle (2.0pt);
\fill [color=black] (21,3) circle (2.0pt);
\end{scriptsize}
\end{tikzpicture}
}
\subfloat[][$(14352)^{-1}$] {
\begin{tikzpicture}[line cap=round,line join=round,>=triangle 45,x=0.6cm,y=0.6cm]
\clip(5.36,-0.72) rectangle (10.7,4.7);
\draw (0,4.5)-- (0,-0.5);
\draw (0,-0.5)-- (5,-0.5);
\draw (5,-0.5)-- (5,4.5);
\draw (5,4.5)-- (0,4.5);
\draw (5.5,4.5)-- (5.5,-0.5);
\draw (5.5,-0.5)-- (10.5,-0.5);
\draw (10.5,-0.5)-- (10.5,4.5);
\draw (10.5,4.5)-- (5.5,4.5);
\draw (11,4.5)-- (11,-0.5);
\draw (11,-0.5)-- (16,-0.5);
\draw (16,-0.5)-- (16,4.5);
\draw (16,4.5)-- (11,4.5);
\draw (16.5,4.5)-- (16.5,-0.5);
\draw (16.5,-0.5)-- (21.5,-0.5);
\draw (21.5,-0.5)-- (21.5,4.5);
\draw (21.5,4.5)-- (16.5,4.5);
\begin{scriptsize}
\fill [color=black] (0.5,0) circle (2.0pt);
\fill [color=black] (1.5,3) circle (2.0pt);
\fill [color=black] (2.5,2) circle (2.0pt);
\fill [color=black] (3.5,4) circle (2.0pt);
\fill [color=black] (4.5,1) circle (2.0pt);
\fill [color=black] (6,0) circle (2.0pt);
\fill [color=black] (8,2) circle (2.0pt);
\fill [color=black] (9,1) circle (2.0pt);
\fill [color=black] (7,4) circle (2.0pt);
\fill [color=black] (10,3) circle (2.0pt);
\fill [color=black] (15.5,0) circle (2.0pt);
\fill [color=black] (14.5,3) circle (2.0pt);
\fill [color=black] (13.5,2) circle (2.0pt);
\fill [color=black] (12.5,4) circle (2.0pt);
\fill [color=black] (11.5,1) circle (2.0pt);
\fill [color=black] (17,4) circle (2.0pt);
\fill [color=black] (17.5,1) circle (2.0pt);
\fill [color=black] (19,2.08) circle (2.0pt);
\fill [color=black] (20,0) circle (2.0pt);
\fill [color=black] (21,3) circle (2.0pt);
\end{scriptsize}
\end{tikzpicture}
}
\subfloat[][$(14352)^{r}$] {
\begin{tikzpicture}[line cap=round,line join=round,>=triangle 45,x=0.6cm,y=0.6cm]
\clip(10.92,-0.72) rectangle (16.16,4.7);
\draw (0,4.5)-- (0,-0.5);
\draw (0,-0.5)-- (5,-0.5);
\draw (5,-0.5)-- (5,4.5);
\draw (5,4.5)-- (0,4.5);
\draw (5.5,4.5)-- (5.5,-0.5);
\draw (5.5,-0.5)-- (10.5,-0.5);
\draw (10.5,-0.5)-- (10.5,4.5);
\draw (10.5,4.5)-- (5.5,4.5);
\draw (11,4.5)-- (11,-0.5);
\draw (11,-0.5)-- (16,-0.5);
\draw (16,-0.5)-- (16,4.5);
\draw (16,4.5)-- (11,4.5);
\draw (16.5,4.5)-- (16.5,-0.5);
\draw (16.5,-0.5)-- (21.5,-0.5);
\draw (21.5,-0.5)-- (21.5,4.5);
\draw (21.5,4.5)-- (16.5,4.5);
\begin{scriptsize}
\fill [color=black] (0.5,0) circle (2.0pt);
\fill [color=black] (1.5,3) circle (2.0pt);
\fill [color=black] (2.5,2) circle (2.0pt);
\fill [color=black] (3.5,4) circle (2.0pt);
\fill [color=black] (4.5,1) circle (2.0pt);
\fill [color=black] (6,0) circle (2.0pt);
\fill [color=black] (8,2) circle (2.0pt);
\fill [color=black] (9,1) circle (2.0pt);
\fill [color=black] (7,4) circle (2.0pt);
\fill [color=black] (10,3) circle (2.0pt);
\fill [color=black] (15.5,0) circle (2.0pt);
\fill [color=black] (14.5,3) circle (2.0pt);
\fill [color=black] (13.5,2) circle (2.0pt);
\fill [color=black] (12.5,4) circle (2.0pt);
\fill [color=black] (11.5,1) circle (2.0pt);
\fill [color=black] (17,4) circle (2.0pt);
\fill [color=black] (17.5,1) circle (2.0pt);
\fill [color=black] (19,2.08) circle (2.0pt);
\fill [color=black] (20,0) circle (2.0pt);
\fill [color=black] (21,3) circle (2.0pt);
\end{scriptsize}
\end{tikzpicture}
}
\subfloat[][$(14352)^{c}$] {
\begin{tikzpicture}[line cap=round,line join=round,>=triangle 45,x=0.6cm,y=0.6cm]
\clip(16.38,-0.72) rectangle (21.82,4.7);
\draw (0,4.5)-- (0,-0.5);
\draw (0,-0.5)-- (5,-0.5);
\draw (5,-0.5)-- (5,4.5);
\draw (5,4.5)-- (0,4.5);
\draw (5.5,4.5)-- (5.5,-0.5);
\draw (5.5,-0.5)-- (10.5,-0.5);
\draw (10.5,-0.5)-- (10.5,4.5);
\draw (10.5,4.5)-- (5.5,4.5);
\draw (11,4.5)-- (11,-0.5);
\draw (11,-0.5)-- (16,-0.5);
\draw (16,-0.5)-- (16,4.5);
\draw (16,4.5)-- (11,4.5);
\draw (16.5,4.5)-- (16.5,-0.5);
\draw (16.5,-0.5)-- (21.5,-0.5);
\draw (21.5,-0.5)-- (21.5,4.5);
\draw (21.5,4.5)-- (16.5,4.5);
\begin{scriptsize}
\fill [color=black] (0.5,0) circle (2.0pt);
\fill [color=black] (1.5,3) circle (2.0pt);
\fill [color=black] (2.5,2) circle (2.0pt);
\fill [color=black] (3.5,4) circle (2.0pt);
\fill [color=black] (4.5,1) circle (2.0pt);
\fill [color=black] (6,0) circle (2.0pt);
\fill [color=black] (8,2) circle (2.0pt);
\fill [color=black] (9,1) circle (2.0pt);
\fill [color=black] (7,4) circle (2.0pt);
\fill [color=black] (10,3) circle (2.0pt);
\fill [color=black] (15.5,0) circle (2.0pt);
\fill [color=black] (14.5,3) circle (2.0pt);
\fill [color=black] (13.5,2) circle (2.0pt);
\fill [color=black] (12.5,4) circle (2.0pt);
\fill [color=black] (11.5,1) circle (2.0pt);
\fill [color=black] (17,4) circle (2.0pt);
\fill [color=black] (18,1) circle (2.0pt);
\fill [color=black] (19,2) circle (2.0pt);
\fill [color=black] (20,0) circle (2.0pt);
\fill [color=black] (21,3) circle (2.0pt);
\end{scriptsize}
\end{tikzpicture}
}
\caption{Symmetries of the permutation 14352}
\end{figure}
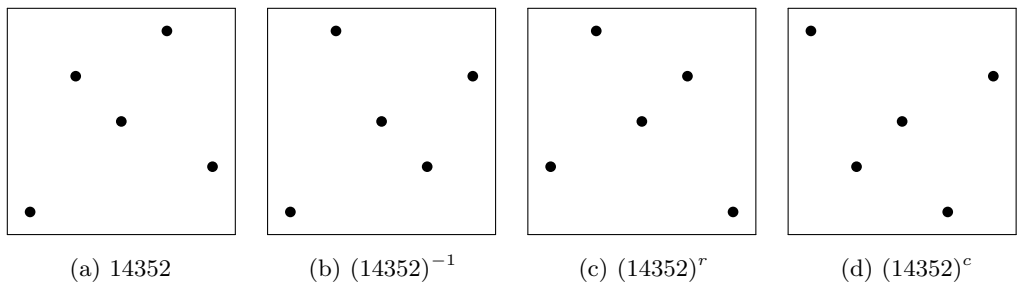

It is clear that all these class operators are involutory, i.e. $(\A^{-1})^{-1} =  \A$, $(\A^r)^r = \A$
and $(\A^c)^c = \A$. The following simple lemma describes how these operators relate to composition.

\begin{lemma} \label{lemma_basicsym}
Let $\A, \A_1, \A_2, \ldots, \A_k$ be permutation classes. Then
\begin{enumerate}[(a)]
\item $(\A_1 \c \A_2 \c \cdots \c \A_k)^{-1} = \A_k^{-1} \c \cdots \c \A_2^{-1} \c \A_1^{-1}$,
\item $\A^r = \A \c \D$ and $\A^c = \D \c \A$,
\item $(\A^r)^c = (\A^c)^r = \D \c \A \c \D$.
\end{enumerate}
\end{lemma}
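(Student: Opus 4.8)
The plan is to verify each identity by tracking what the relevant symmetry does to a permutation expressed as a composition, working at the level of individual permutations and then lifting to classes. For part (a), I would first establish the well-known fact that $(\pi \c \varphi)^{-1} = \varphi^{-1} \c \pi^{-1}$ for permutations of equal order: this is immediate since $(\pi\c\varphi)(\varphi^{-1}\c\pi^{-1}) = \pi \c (\varphi \c \varphi^{-1}) \c \pi^{-1} = \mathrm{id}$. Applying this repeatedly (formally by induction on $k$, using the inductive definition of $\C_1 \c \cdots \c \C_k$ together with associativity from the earlier lemma) gives $(\A_1 \c \cdots \c \A_k)^{-1} = \A_k^{-1} \c \cdots \c \A_1^{-1}$ at the level of classes, since the inversion map is a bijection between $\A_1 \c \cdots \c \A_k$ and $\A_k^{-1} \c \cdots \c \A_1^{-1}$ sending each witnessing factorization to its reversed inverted factorization.

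For part (b), the key observation is the identification of $\D$ as a set of permutations: $\D \cap \S_n = \{\delta_n\}$ where $\delta_n$ is the decreasing permutation $n(n-1)\cdots 21$, i.e. $\delta_n(i) = n-i+1$. Then for $\pi \in \S_n$ one computes $(\pi \c \delta_n)(i) = \pi(n-i+1) = \pi^r(i)$, so $\pi \c \delta_n = \pi^r$; ranging over $\pi \in \A$ and all $n$ gives $\A \c \D = \A^r$. Symmetrically $(\delta_n \c \pi)(i) = n - \pi(i) + 1 = \pi^c(i)$, so $\delta_n \c \pi = \pi^c$ and $\D \c \A = \A^c$. Here one should note that in $\A \c \D$ and $\D \c \A$ the order-matching condition $|\pi| = |\varphi|$ is automatically satisfied by pairing $\pi$ with the unique decreasing permutation of the same length, so no permutations are lost.

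Part (c) is then a formal consequence of (b) and associativity: $(\A^r)^c = \D \c \A^r = \D \c (\A \c \D) = \D \c \A \c \D$ and $(\A^c)^r = \A^c \c \D = (\D \c \A) \c \D = \D \c \A \c \D$, so both coincide with $\D \c \A \c \D$. I do not anticipate a genuine obstacle here; the only point requiring a little care is making sure the length-matching restriction in the definition of class composition does not cause trouble, which it does not precisely because $\D$ contains exactly one permutation of each length. The rest is routine unwinding of definitions.
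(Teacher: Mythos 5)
Your proposal is correct and follows essentially the same route as the paper: part (a) via the standard group identity for inverses of a composition, part (b) by the pointwise computation $(\alpha \c \delta_n)(i) = \alpha(n-i+1) = \alpha^r(i)$ and $(\delta_n \c \alpha)(i) = n - \alpha(i) + 1 = \alpha^c(i)$, and part (c) as an immediate consequence of (b). Your extra remark that $\D$ has exactly one permutation of each length, so the length-matching in the class composition loses nothing, is a small additional care the paper leaves implicit, but the argument is the same.
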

\begin{proof}
(a): If $\pi_i \in \A_i$ for every $i \in [k]$, then by the property of inverse elements in a~group we have
$(\pi_1 \c \pi_2 \c \cdots \c \pi_k)^{-1} = \pi_k^{-1} \c \cdots \c \pi_2^{-1} \c \pi_1^{-1}$.

(b): Let $\alpha \in \A$ and $\delta \in \D$ be permutations of order $n$. By definition $\delta(k) = n - k + 1$
for every $k \in [n]$. Therefore $\alpha(\delta(k)) = \alpha(n-k+1) = \alpha^r(k)$ and 
  $\delta(\alpha(k)) = n - \alpha(k) + 1 = \alpha^c(k)$ for every $k \in [n]$.

(c):  Apparent from (b).
\end{proof}

Using this lemma we derive several composability criteria for symmetries of a~given class,
the first of which requires no further proof as it is an immediate consequence of Lemma \ref{lemma_basicsym}.
\begin{cor} \label{cor_inversion}
Let $\A$ be a~permutation class. Then the following statements are equivalent:
\begin{enumerate}[(a)]
\item $\A$ is composable,
\item $\A^{-1}$ is composable,
\item $(\A^r)^c$ is composable.
\end{enumerate}
\end{cor}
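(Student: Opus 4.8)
The plan is to reduce everything to part (a) of Lemma \ref{lemma_basicsym}, which says that inversion reverses the order of composition, and to the observation that inversion is an involution that maps proper subclasses to proper subclasses. First I would establish the equivalence of (a) and (b). Suppose $\A$ is composable, so $\A \subseteq \C_1 \c \cdots \c \C_k$ with each $\C_i \subsetneq \A$. Applying the inversion operator to both sides and using that $\pi \mapsto \pi^{-1}$ is a bijection together with Lemma \ref{lemma_basicsym}(a), we get $\A^{-1} \subseteq \C_k^{-1} \c \cdots \c \C_1^{-1}$; since each $\C_i^{-1}$ is a permutation class (inversion clearly preserves the property of being downward closed, as $\sigma \leq \pi$ iff $\sigma^{-1} \leq \pi^{-1}$) and $\C_i^{-1} \subsetneq \A^{-1}$ because $\C_i \subsetneq \A$ and inversion is injective, this exhibits $\A^{-1}$ as $k$-composable. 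The converse follows by symmetry, applying the same argument to $\A^{-1}$ and using $(\A^{-1})^{-1} = \A$.

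Next I would handle the equivalence of (a) and (c) in the same spirit, now using Lemma \ref{lemma_basicsym}(c), which gives $(\A^r)^c = \D \c \A \c \D$. The key auxiliary facts are that composing on either side with $\D$ is an involutory operation on permutation classes at the level of the operators $\A \mapsto \A^r$ and $\A \mapsto \A^c$ (from $(\A^r)^r = \A$ and $(\A^c)^c = \A$), that $\A^r$ and $\A^c$ are genuine permutation classes, and that the maps $\A \mapsto \A^r$ and $\A \mapsto \A^c$ are bijections preserving proper inclusion. Then if $\A \subseteq \C_1 \c \cdots \c \C_k$, composing with $\D$ on the left and right and using associativity, $(\A^r)^c = \D \c \A \c \D \subseteq \D \c \C_1 \c \cdots \c \C_k \c \D = (\C_1^c)^{?} \cdots$; to keep the subclasses proper and inside $(\A^r)^c$ I would instead write it as $(\A^r)^c \subseteq (\C_1^c) \c \C_2 \c \cdots \c \C_{k-1} \c (\C_k^r)$, absorbing one $\D$ into each end. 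Each of $\C_1^c$ and $\C_k^r$ is a proper subclass of $(\A^r)^c$ by involutivity and injectivity of the symmetry operators, and the middle classes $\C_2, \dots, \C_{k-1}$ are proper subclasses of $\A$, hence of... — here one must be slightly careful, since $\C_i \subsetneq \A$ does not immediately give $\C_i \subsetneq (\A^r)^c$. To fix this cleanly I would instead compose the whole chain with $\D$ on both ends and recognise $\D \c \C_1$ as a class contained in $(\A^r)^c$'s first-factor slot that equals $\C_1^c$, so that $(\A^r)^c \subseteq \C_1^c \c \C_2 \c \cdots \c \C_k^r$ with all factors now subclasses of the respective symmetries, and observe that for the inclusion to witness composability of $(\A^r)^c$ we only need each factor to be a proper subclass of $(\A^r)^c$; since $\C_1^c \subsetneq \A^c$ and more to the point every $\C_i$ and its symmetrisation is a proper subset, one checks directly that a class $\mathcal{D}_i$ with $\mathcal{D}_i \c (\text{stuff}) \supseteq (\A^r)^c$ forces $\mathcal{D}_i$ proper. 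The cleanest route, which I will adopt, is to note that (c) is just (b) composed with two applications of the reverse and complement symmetries, each of which individually preserves composability by an argument formally identical to the $\A \leftrightarrow \A^{-1}$ one; that is, I first prove the freestanding claim ``$\A$ composable $\iff$ $\A^r$ composable'' and ``$\A$ composable $\iff$ $\A^c$ composable'' using $\A^r = \A \c \D$, $\A^c = \D \c \A$, and then chain them.

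The main obstacle I anticipate is precisely this bookkeeping about properness: when we transport an inclusion $\A \subseteq \C_1 \c \cdots \c \C_k$ through a symmetry operator, the outer factors transform nicely (e.g. $\C_1 \mapsto \C_1^c$, a proper subclass of $\A^c$) but the inner factors $\C_2, \dots, \C_{k-1}$ are a priori only proper subclasses of $\A$, not of $\A^c$ or $(\A^r)^c$. Resolving this requires the elementary but essential observation that if $\C \subseteq \D_1 \c \cdots \c \D_k$ and some $\D_j$ is \emph{not} a proper subclass of $\C$, i.e. $\D_j \supseteq \C$, then... in fact the right framing is simply: composability is a property invariant under each of the group of eight symmetries generated by inverse, reverse, complement, because each such symmetry $\Phi$ satisfies $\Phi(\A_1 \c \cdots \c \A_k) = \Phi'(\A_{\sigma(1)}) \c \cdots$ for an appropriate permutation and modified operators, maps permutation classes to permutation classes bijectively, and maps proper subclasses of $\A$ to proper subclasses of $\Phi(\A)$. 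I will state this as a short lemma, verify it for the three generators using Lemma \ref{lemma_basicsym}, and then (a)$\iff$(b)$\iff$(c) is immediate since $\mathrm{id}$, $\cdot^{-1}$, and $\cdot^{rc}$ are all in this group. Everything else is routine.
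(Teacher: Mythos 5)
Your argument for (a)$\iff$(b) is correct and is exactly what the paper has in mind: inversion is an anti-automorphism of composition by Lemma \ref{lemma_basicsym}(a), is an involution, and carries proper subclasses of $\A$ to proper subclasses of $\A^{-1}$. The route you finally adopt for (a)$\iff$(c), however, has a genuine gap. You propose to prove the freestanding claims ``$\A$ composable $\iff$ $\A^r$ composable'' and ``$\A$ composable $\iff$ $\A^c$ composable'' by an argument ``formally identical'' to the inversion one, and more generally to verify a lemma saying composability is invariant under all eight symmetries by checking the three generators $^{-1}$, $^r$, $^c$. That lemma is false as stated for two of your three generators: by Lemma \ref{lemma_basicsym}(b), $(\C_1\c\cdots\c\C_k)^r=\C_1\c\cdots\c\C_k\c\D$ and $(\C_1\c\cdots\c\C_k)^c=\D\c\C_1\c\cdots\c\C_k$, so only one outer factor gets transformed and the remaining factors are proper subclasses of $\A$, with no reason to be (proper) subclasses of $\A^r$ or $\A^c$ --- this is precisely the properness obstruction you noticed yourself midway and then set aside. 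It is not a removable bookkeeping issue: the paper only proves the conditional Lemma \ref{lemma_symcomp} (assuming $\I\subsetneq\A$, and yielding $(2k-1)$-composability because extra $\D$ factors must be inserted), and it explicitly poses the unconditional statement ``is there a composable $\A$ with $\A^r$ or $\A^c$ uncomposable?'' as an open question. So your chain (a)$\Rightarrow$($\A^r$ composable)$\Rightarrow$($(\A^r)^c$ composable) rests on an unproven (possibly false) step.

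The correct repair is the one you were circling before abandoning it: treat reverse-complement as a single operator, namely conjugation by $\D$ (Lemma \ref{lemma_basicsym}(c)), which, unlike $^r$ and $^c$ separately, is a genuine automorphism of composition because $\D\c\D=\I$. Do not absorb the two $\D$'s only at the two ends; insert $\D\c\D$ between every pair of consecutive factors. From $\A\subseteq\C_1\c\cdots\c\C_k$ with $\C_i\subsetneq\A$ you then get
\begin{equation*}
(\A^r)^c=\D\c\A\c\D\subseteq\D\c\C_1\c\cdots\c\C_k\c\D=(\D\c\C_1\c\D)\c(\D\c\C_2\c\D)\c\cdots\c(\D\c\C_k\c\D)=(\C_1^r)^c\c(\C_2^r)^c\c\cdots\c(\C_k^r)^c,
\end{equation*}
and each $(\C_i^r)^c$ is a proper subclass of $(\A^r)^c$ because $\pi\mapsto(\pi^r)^c$ is a containment-preserving involutive bijection; the converse implication follows by applying the same map once more. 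With this replacement your proof closes and coincides with what the paper means by calling the corollary an immediate consequence of Lemma \ref{lemma_basicsym}.
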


The case of the reverse and complementary operators is more complicated and requires additional assumptions.
\begin{lemma}\label{lemma_symcomp}
If $\A$ is a~$k$-composable class and $\I \subsetneq \A$, then both $\A^r$ and $\A^c$ are $(2k-1)$-composable.
\end{lemma}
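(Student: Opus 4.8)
The plan is to leverage Lemma~\ref{lemma_basicsym}(b), which tells us that $\A^r = \A \c \D$ and $\A^c = \D \c \A$. Since $\A$ is $k$-composable, we have $\A \subseteq \A_1 \c \A_2 \c \cdots \c \A_k$ for some proper subclasses $\A_1, \dots, \A_k \subsetneq \A$. The naive attempt is to write $\A^r = \A \c \D \subseteq \A_1 \c \cdots \c \A_k \c \D$, exhibiting $\A^r$ as composable from $k+1$ classes; but this has two defects. First, it gives $k+1$ rather than $2k-1$ factors, which is the wrong count (and worse when $k=2$). Second, and more seriously, the factors $\A_1, \dots, \A_k$ and $\D$ must be proper subclasses \emph{of $\A^r$}, not of $\A$, so we need to control how these classes sit inside $\A^r$.

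First I would address the properness issue. Each $\A_i$ is a proper subclass of $\A$; applying the reverse operator, $\A_i^r \subsetneq \A^r$ (the reverse operator is an order-isomorphism of the containment poset, so it preserves proper inclusion). Also $\D \subsetneq \A^r$: since $\I \subsetneq \A$ we have $\I^r = \D \subseteq \A^r$, and the inclusion is proper because $\A^r$ being infinite and strictly containing $\D$ must also contain some permutation outside $\D$. So the honest reformulation of the naive bound is to rewrite the composition of $\A$ in terms of reversed classes. The key algebraic manipulation is to interleave copies of $\D$ using $\D \c \D = \I$: we have
\begin{equation*}
\A \subseteq \A_1 \c \A_2 \c \cdots \c \A_k = \A_1 \c (\D \c \D) \c \A_2 \c (\D \c \D) \c \cdots \c (\D \c \D) \c \A_k = (\A_1 \c \D) \c (\D \c \A_2 \c \D) \c \cdots \c (\D \c \A_k),
\end{equation*}
where I have grouped the factors so that the first block is $\A_1 \c \D = \A_1^r$, each middle block is $\D \c \A_i \c \D = (\A_i^r)^c$ for $2 \le i \le k-1$ by Lemma~\ref{lemma_basicsym}(c), and the last block is $\D \c \A_k = \A_k^c$. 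Then $\A^r = \A \c \D \subseteq \A_1^r \c (\A_2^r)^c \c \cdots \c (\A_{k-1}^r)^c \c \A_k^c \c \D$; absorbing the trailing $\D$ into the last factor gives $\A_k^c \c \D = (\A_k^c)^r = (\A_k^r)^c$, so we land on exactly $k$ factors again — still not $2k-1$. To reach $2k-1$, I instead expand each of the $k-1$ internal gaps into $\D \c \D$ only once, producing $k$ factors from the $\A_i$'s plus $k-1$ copies of $\D$, and do \emph{not} recombine them: $\A \subseteq \A_1 \c \D \c \D \c \A_2 \c \D \c \D \c \cdots \c \D \c \D \c \A_k$ has $2(k-1)$ copies of $\D$, too many; the right count comes from a single chain of $\D$'s, $\A \c \D \subseteq \A_1 \c \D \c \A_2 \c \D \c \cdots \c \D \c \A_k \c \D$ — wait, this needs the intermediate $\D$'s to cancel, which they do not. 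The correct bookkeeping is: from $\A^r = \A \c \D \subseteq \A_1 \c \A_2 \c \cdots \c \A_k \c \D$, rewrite $\A_i = \A_i \c \D \c \D = \A_i^r \c \D$ for $i < k$, yielding $\A^r \subseteq \A_1^r \c \D \c \A_2^r \c \D \c \cdots \c \D \c \A_{k-1}^r \c \D \c \A_k \c \D = \A_1^r \c \D \c \A_2^r \c \D \c \cdots \c \A_{k-1}^r \c \D \c \A_k^r$, which is an alternating composition of $k$ classes of the form $\A_i^r$ and $k-1$ copies of $\D$, totalling $2k-1$ factors, each a proper subclass of $\A^r$ (using $\A_i^r \subsetneq \A^r$ and $\D \subsetneq \A^r$ as established). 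The case of $\A^c$ is symmetric, starting from $\A^c = \D \c \A$ and using $\A_i = \D \c \D \c \A_i = \D \c \A_i^c$.

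The main obstacle I anticipate is not the algebra but the careful verification that every factor appearing in the final composition is a \emph{proper} subclass of $\A^r$ (respectively $\A^c$). For the $\A_i^r$ this is immediate from the poset-isomorphism property of the reverse map. For the copies of $\D$, the argument $\D = \I^r \subseteq \A^r$ with properness from $\I \subsetneq \A$ needs the observation that $\A$ (hence $\A^r$) contains a permutation not lying in $\I$ — equivalently, $\A$ is not a subclass of $\I$, which follows since $\I \subsetneq \A$ means $\A$ strictly contains $\I$ and in particular $\A \ne \I$, but we need more: we need $\A \not\subseteq \I$, i.e. $21 \in \A$. Indeed $\I \subsetneq \A$ gives some $\pi \in \A \setminus \I$, so $\pi$ contains $21$, hence $21 \in \A$ and $21 \in \A^r$, so $\D \subsetneq \A^r$. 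One should double-check the edge behaviour at $k=2$, where $2k-1 = 3$: the formula gives $\A^r \subseteq \A_1^r \c \D \c \A_2^r$, three proper subclasses, consistent with the claim and genuinely weaker than $2$-composability, matching the spirit of Theorem~\ref{evencomposable}. I would close by remarking that this is tight in the sense one cannot in general do better, referencing the parity obstruction already established.
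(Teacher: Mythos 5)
Your final argument is correct and is essentially the paper's own proof: insert $\D \c \D = \I$ to rewrite $\A^r = \A \c \D \subseteq \A_1 \c \cdots \c \A_k \c \D$ as $\A_1^r \c \D \c \A_2^r \c \D \c \cdots \c \A_k^r$ ($k$ reversed factors alternating with $k-1$ copies of $\D$, i.e.\ $2k-1$ factors), check $\A_i^r \subsetneq \A^r$ and $\D \subsetneq \A^r$ using $\I \subsetneq \A$, and treat $\A^c$ symmetrically via $\A^c = \D \c \A$. One cosmetic remark: to get properness of $\D \subsetneq \A^r$ you should exhibit $12 = (21)^r \in \A^r \setminus \D$ rather than $21 \in \A^r$ (which lies in $\D$), but the intended reasoning is clearly right.
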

\begin{proof}
Let $\A$ be composable from its proper subclasses $\A_1, \A_2, \ldots, \A_k$.
Then
\begin{equation*} \A^r = \A \c \D \subseteq \A_1 \c \A_2 \c \cdots \c \A_k \c \D =
(\A_1^r \c \D) \c (\A_2^r \c \D) \c \cdots \c (\A_k^r \c \D) \c \D.\end{equation*}
It holds that $\D \c \D = \I$, so we have
\begin{equation*} \A^r \subseteq \A_1^r \c \D \c \A_2^r \c \D \c \cdots \c \A_k^r.\end{equation*}
Clearly $\A_i^r \subsetneq \A^r$ and since $\I \subsetneq \A$, we have $\D \subsetneq \A^r$,
so the proper subclass criterion is met and $\A^r$ is therefore $(2k-1)$-composable.
Analogously we show that
\begin{equation*} \A^c \subseteq \A_1^c \c \D \c \A_2^c \c \D \c \cdots \c \A_k^c. \qedhere \end{equation*}
\end{proof}

\section{On permutations avoiding a decreasing sequence} \label{ch3}

Recall that $\I_k = \Av((k+1)\cdots21)$ is the class of permutations merged from $k$ increasing sequences, 
or equivalently those avoiding a~decreasing sequence of length $k+1$.
In this section, we prove that $\I_k$ is 2-composable and show several
examples of how $\I_k$ can be composed from two proper subclasses. 
      
\subsection{Vertical and horizontal merge}

Let $\C_1, \ldots, \C_k$ be any permutation classes. We define the \emph{vertical merge} of these classes as the class 
of permutations that can be written as a~concatenation $s_1s_2\cdots s_k$ of $k$ (possibly empty) sequences 
such that $s_i$ is order-isomorphic
to a~permutation of $\C_i$. We write this class as $\V(\C_1, \ldots, \C_k)$. In addition,
if $\C_1 = \C_2 = \cdots = \C_k = \I$, we let $\V_k$ denote the class $\V(\C_1, \ldots, \C_k)$. Similarly we define
the $\emph{horizontal merge}$ of these classes as the class of permutations that
can be written as a~merge of $k$ (possibly empty) sequences $s_1, s_2, \ldots, s_k$
such that each $s_i$  is order-isomorphic to $\pi_i \in \C_i$ and every element of $s_i$
is smaller than every element of $s_{i+1}$ for $1 \leq i \leq k-1$. Note that this implies
that each $s_i$ uses a set of consecutive integers.
We let $\Ho(\C_1, \ldots, \C_k)$ denote the horizontal merge of classes $\C_1, \ldots, \C_k$ and if 
$\C_1 = \C_2 = \cdots = \C_k = \I$ we write $\Ho_k = \Ho(\C_1, \ldots, \C_k)$.

Alternatively, we can observe that $\pi \in \V(\C_1, \ldots, \C_k)$ resp. $\pi \in \Ho(\C_1, \ldots,\C_k)$ 
if and only if its plot in $\R^2$ can be separated by vertical resp.
horizontal lines into at most $k$ parts , $i$-th of them containing a~sequence order-isomorphic
to a~permutation in $\C_i$ (see Figure \ref{figure_VH}), hence the names of the classes.

\begin{figure}[h]
  \centering
  \subfloat[][An element of the vertical merge $\V_k$]{
    \begin{tikzpicture}[line cap=round,line join=round,>=triangle 45,x=1.0cm,y=1.0cm]
      \clip(1.66,0.44) rectangle (8.4,5.5);
      \draw (3,1)-- (7,1);
      \draw (7,1)-- (7,5);
      \draw (7,5)-- (3,5);
      \draw (3,5)-- (3,1);
      \draw (3.84,5)-- (3.84,1);
      \draw (4.86,1)-- (4.86,5);
      \draw (6,5)-- (6,1);
      \begin{scriptsize}
      \fill [color=black] (3.24,1.44) circle (1.5pt);
      \fill [color=black] (3.48,2.56) circle (1.5pt);
      \fill [color=black] (3.66,4.26) circle (1.5pt);
      \fill [color=black] (4.06,1.14) circle (1.5pt);
      \fill [color=black] (4.28,1.94) circle (1.5pt);
      \fill [color=black] (4.5,3.18) circle (1.5pt);
      \fill [color=black] (4.74,4.78) circle (1.5pt);
      \fill [color=black] (5.04,1.36) circle (1.5pt);
      \fill [color=black] (5.22,2.18) circle (1.5pt);
      \fill [color=black] (5.38,2.82) circle (1.5pt);
      \fill [color=black] (5.62,3.94) circle (1.5pt);
      \fill [color=black] (5.82,4.54) circle (1.5pt);
      \fill [color=black] (6.18,1.64) circle (1.5pt);
      \fill [color=black] (6.3,2.4) circle (1.5pt);
      \fill [color=black] (6.58,3.44) circle (1.5pt);
      \fill [color=black] (6.84,4.1) circle (1.5pt);
      \end{scriptsize}
    \end{tikzpicture}
  }
  \subfloat[][An element of the horizontal merge $\Ho_k$]{
    \begin{tikzpicture}[line cap=round,line join=round,>=triangle 45,x=1.0cm,y=1.0cm]
      \clip(6.8,0.44) rectangle (13.22,5.24);
      \draw (6,5)-- (6,1);
      \draw (8,1)-- (12,1);
      \draw (12,1)-- (12,5);
      \draw (12,5)-- (8,5);
      \draw (8,5)-- (8,1);
      \draw (8,4.12)-- (12,4.12);
      \draw (8,2.96)-- (12,2.96);
      \draw (8,2.14)-- (12,2.14);
      \begin{scriptsize}
      \fill [color=black] (8.72,1.12) circle (1.5pt);
      \fill [color=black] (9.26,1.38) circle (1.5pt);
      \fill [color=black] (10.22,1.68) circle (1.5pt);
      \fill [color=black] (11.34,1.96) circle (1.5pt);
      \fill [color=black] (8.34,2.24) circle (1.5pt);
      \fill [color=black] (9.86,2.56) circle (1.5pt);
      \fill [color=black] (10.94,2.86) circle (1.5pt);
      \fill [color=black] (8.1,3.06) circle (1.5pt);
      \fill [color=black] (8.96,3.42) circle (1.5pt);
      \fill [color=black] (10.42,3.68) circle (1.5pt);
      \fill [color=black] (11.88,3.98) circle (1.5pt);
      \fill [color=black] (8.62,4.2) circle (1.5pt);
      \fill [color=black] (9.66,4.48) circle (1.5pt);
      \fill [color=black] (11.24,4.78) circle (1.5pt);
      \end{scriptsize}
    \end{tikzpicture}   
  }
\caption{Examples of vertical and horizontal merges}
\label{figure_VH}
\end{figure}
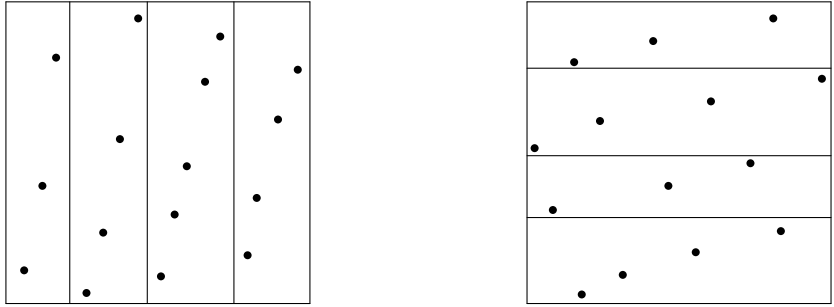
In addition we define $\Ho = \Ho_2$ and $\V = \V_2$ for future convenience. 
We continue by observing an important connection between the horizontal and vertical merge.

\begin{lemma}\label{lemma_VHinvert}
Let $\C_1, \ldots, \C_k$ be any permutation classes. Then
\begin{equation*} \Ho(\C_1, \ldots, \C_k) = \left(\V(\C_1^{-1}, \ldots, \C_k^{-1})\right)^{-1}. \end{equation*}
\end{lemma}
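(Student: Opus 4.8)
The plan is to unwind both sides of the claimed identity directly from the definitions, using the fact from Lemma \ref{lemma_basicsym}(a) that inversion reverses composition, together with the elementary observation that taking the inverse of a permutation swaps the roles of ``position'' and ``value''. Concretely, a permutation $\pi$ lies in $\V(\C_1, \ldots, \C_k)$ exactly when its diagram splits into $k$ vertical strips (consecutive blocks of columns), the $i$-th strip carrying a pattern in $\C_i$; and $\pi$ lies in $\Ho(\C_1, \ldots, \C_k)$ exactly when its diagram splits into $k$ horizontal strips (consecutive blocks of rows), the $i$-th carrying a pattern in $\C_i$. Since applying $^{-1}$ to a permutation is precisely reflection of the diagram across the main diagonal, it turns vertical strips into horizontal strips and vice versa, and it turns the pattern in the $i$-th strip into its inverse.

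First I would spell out this reflection argument carefully for a single permutation: if $\pi$ has order $n$ and its columns are partitioned as $\{1,\ldots,a_1\} \cup \{a_1+1,\ldots,a_2\} \cup \cdots$, with the subsequence of $\pi$ on block $i$ order-isomorphic to some $\sigma_i \in \C_i^{-1}$, then I claim $\pi^{-1}$ has its rows partitioned the same way, with the subsequence on row-block $i$ order-isomorphic to $\sigma_i^{-1} \in \C_i$. This is just the statement that the positions of $\pi$ lying in column-block $i$ correspond, under $\pi \mapsto \pi^{-1}$, to the values of $\pi^{-1}$ lying in that same index range, i.e. to the entries of $\pi^{-1}$ in row-block $i$; and that restricting $\pi$ to a set of columns and then taking the inverse pattern gives the same thing as taking $\pi^{-1}$ and restricting to the corresponding set of rows. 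Running this equivalence in both directions gives $\pi \in \V(\C_1^{-1},\ldots,\C_k^{-1}) \iff \pi^{-1} \in \Ho(\C_1,\ldots,\C_k)$, which, since $\pi \mapsto \pi^{-1}$ is an involution, is exactly the asserted set equality after applying $^{-1}$ to one side.

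I expect the main obstacle to be purely bookkeeping: making precise the claim that ``the inverse of a pattern induced on a sub-block of columns equals the pattern induced on the corresponding sub-block of rows of the inverse permutation,'' i.e. that localizing and inverting commute in the right sense. This needs a clean argument that if $T \subseteq [n]$ is the set of indices (columns) of a block, then $\pi(T)$ is the corresponding set of rows in $\pi^{-1}$, and that the order-isomorphism type of $\pi|_T$ inverts to the order-isomorphism type of $\pi^{-1}|_{\pi(T)}$. Once that commutation lemma is stated, the proof is a two-line chain of equivalences. An alternative, perhaps slicker, route avoids the diagram language entirely: one can note $\V(\C_1,\ldots,\C_k) = \Ho(\C_1,\ldots,\C_k)^{-1}$ would follow if one already had a ``horizontal is to vertical as composition-on-one-side is'' description, but since the paper only has Lemma \ref{lemma_basicsym} available, I would stick with the direct diagram-reflection argument as the cleanest self-contained proof.
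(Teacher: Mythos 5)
Your proof is correct and follows essentially the same route as the paper: the paper's own (two-sentence) argument is exactly the observation that inversion swaps positions and values, so a concatenation into column blocks order-isomorphic to patterns in $\C_i^{-1}$ becomes, after inverting, a partition into consecutive-value blocks order-isomorphic to the inverse patterns in $\C_i$, giving $\pi \in \V(\C_1^{-1},\ldots,\C_k^{-1}) \iff \pi^{-1} \in \Ho(\C_1,\ldots,\C_k)$. Your version just spells out the bookkeeping (the commutation of restriction with inversion) that the paper leaves implicit.
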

\begin{proof}

If $\pi \in \V(\C^{-1}_1, \ldots, \C^{-1}_k)$, we have that $\pi = s_1s_2\cdots s_k$ such that $s_i$ is order-isomorphic
to $\pi_i \in \C_i^{-1}$. For every $i \in [k]$, $\pi^{-1}$ contains a~set of consecutive integers
on indices $(s_i)_1, (s_i)_2, \ldots, (s_i)_{|s_i|}$ and the sequence at these indices is order-isomorphic
 to $\pi_i^{-1} \in \C_i$.

The opposite inclusion is equally straightforward.
\end{proof}

When composed with any other class $\A$, the classes $\Ho_k$, $\V_k$ and $\I_k$ can be viewed 
as a~unary operator transforming $\A$ in a~specific way.
We formalise this approach in the following lemma.
\begin{lemma} \label{lemma_behaviour}
Let $\A$ be an arbitrary permutation class. Then
\begin{enumerate}[(a)]
\item $\A \c \Ho_k$ is precisely the class of permutations which can be obtained from a~permutation of $\A$
  by dividing it into at most $k$ contiguous subsequences and interleaving them in any way,
\item $\A \c \V_k$ is precisely the class of permutations which can be obtained from a~permutation of $\A$
by dividing it into at most $k$ subsequences and concatenating them,
\item $\A \c \I_k$ is precisely the class of permutations which can be obtained from a~permutation of $\A$
by dividing it into at most $k$ subsequences and interleaving them in any way.
\end{enumerate}
\end{lemma}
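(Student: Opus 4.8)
The plan is to prove all three parts by the same template: in each case exhibit the "dividing and recombining" operation explicitly as composition with the relevant class, using the structure of $\Ho_k$, $\V_k$, $\I_k$ described earlier, and argue set equality by two inclusions. I would begin with part (b), which is the cleanest, then derive (a) from (b) via the inverse relation of Lemma \ref{lemma_VHinvert}, and finally obtain (c) by combining (a) and (b).

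For (b): fix $\alpha \in \A$ of order $n$ and $\varphi \in \V_k$ of order $n$. By definition of $\V_k$, the sequence $\varphi(1), \ldots, \varphi(n)$ splits as a concatenation $s_1 s_2 \cdots s_k$ where each $s_i$ is an increasing block of values occupying a contiguous set of positions. Since $\varphi$ is a permutation, the values appearing in each block $s_j$ form some subset $T_j \subseteq [n]$ with $[n] = T_1 \sqcup \cdots \sqcup T_k$, and within a block the values appear in increasing order. Now $(\alpha \c \varphi)(i) = \alpha(\varphi(i))$: reading $\alpha \c \varphi$ from left to right, we first see $\alpha$ restricted (and order-preservingly re-indexed) to the positions in $T_1$, then to $T_2$, and so on — precisely $\alpha$ cut into $k$ subsequences (indexed by the $T_j$) which are then concatenated in the order $T_1, \ldots, T_k$. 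Conversely, given any partition of $[n]$ into $k$ parts $T_1, \ldots, T_k$, define $\varphi$ to list the elements of $T_1$ in increasing order, then those of $T_2$, etc.; this $\varphi$ lies in $\V_k$ and $\alpha \c \varphi$ realizes the stated concatenation. This gives equality of the two classes as sets; since both are permutation classes (the left side by Lemma 2.1(a)), the lemma follows for (b).

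For (a): by Lemma \ref{lemma_VHinvert}, $\Ho_k = (\V_k)^{-1}$ since $\I^{-1} = \I$. Using Lemma \ref{lemma_basicsym}(a), $\A \c \Ho_k = \A \c (\V_k)^{-1}$, and taking inverses, $(\A \c \Ho_k)^{-1} = \V_k \c \A^{-1}$. By part (b) applied to $\A^{-1}$, the class $\V_k \c \A^{-1}$ — wait, part (b) is stated with $\V_k$ on the right; instead I would argue directly. Fix $\alpha \in \A$ of order $n$ and $\psi \in \Ho_k$ of order $n$. The permutation $\psi$ merges $k$ increasing blocks where block $j$ uses a consecutive range of values $R_j$, with $R_1 < R_2 < \cdots < R_k$ as value-intervals partitioning $[n]$, and the positions used by block $j$ form an arbitrary subset $P_j$ of $[n]$ (arranged increasingly in value). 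Then $\alpha \c \psi$ at the positions $P_j$ equals $\alpha$ restricted to the value-interval $R_j$ — so $\alpha$ has been cut into $k$ \emph{contiguous} subsequences (the portions landing in $R_1, \ldots, R_k$) which are then re-interleaved according to the position sets $P_j$. Conversely any such cut-and-interleave is realized by choosing the $P_j$ accordingly and letting $\psi \in \Ho_k$ place the $j$-th contiguous chunk's worth of values consecutively. This proves (a).

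For (c): this is immediate from (a) together with Lemma \ref{lemma_kl} and the behaviour of $\I_k$, but more directly: fix $\alpha \in \A$ of order $n$ and $\varphi \in \I_k$. Partition $\varphi$ into $k$ increasing subsequences, occupying position sets $P_1, \ldots, P_k$ and value sets $T_1, \ldots, T_k$ with $[n] = T_1 \sqcup \cdots \sqcup T_k$ (each $s_j$ increasing means the values in $T_j$ appear in increasing order at the positions $P_j$). Then $\alpha \c \varphi$ restricted to positions $P_j$ is order-isomorphic to $\alpha$ restricted to the value set $T_j$; so $\alpha$ is cut into $k$ arbitrary subsequences (indexed by the $T_j$) and these are interleaved arbitrarily (according to the $P_j$). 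Conversely, any cut of $\alpha$ into subsequences $\alpha|_{T_1}, \ldots, \alpha|_{T_k}$ followed by an arbitrary interleaving into position sets $P_1, \ldots, P_k$ is realized by the $\varphi \in \I_k$ that sends $P_j$ increasingly onto $T_j$. Equality as sets follows, and both sides are permutation classes.

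The main obstacle is bookkeeping: one must be careful to track which role (positions vs.\ values) each factor of the composition plays, since composing $\alpha \c \varphi$ means $\varphi$ chooses \emph{which value of the outer permutation} is read at each position, so the "blocks" of $\varphi$ (contiguous in position for $\V_k$, contiguous in value for $\Ho_k$, arbitrary for $\I_k$) translate into the corresponding structural feature of how $\alpha$ is dissected and reassembled. Making the order-isomorphism claims precise — e.g.\ that $\alpha \c \varphi$ on positions $P_j$ is order-isomorphic to $\alpha$ on values $T_j$, in exactly the order dictated by the block — is the one spot where a careful (but routine) verification is needed; the converse directions are straightforward constructions.
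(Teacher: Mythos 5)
Your proposal is correct and follows essentially the same route as the paper: for each part you verify both inclusions directly by translating the block structure of the permutation in $\V_k$, $\Ho_k$ or $\I_k$ (its value sets acting as position sets of $\alpha$) into the cut-and-recombine description, which is exactly the paper's argument. The briefly attempted detour via Lemma \ref{lemma_VHinvert} for part (a) is abandoned and replaced by the same direct bookkeeping, so nothing substantive differs.
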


\begin{proof}
Let $\alpha \in \A$, $\eta \in \Ho_k$, $\nu \in \V_k$ and $\iota \in \I_k$.

(a):
Consider the permutation $\alpha \c \eta \in \A \c \Ho_k$. Then $\eta$ is merged from $k$ (possibly empty) sequences
of consecutive integers $s_1, \ldots, s_k$. Now we define $k$ sequences $r_1, \ldots, r_k$
such that $|s_i| = |r_i|$ and $(r_i)_j = \alpha((s_i)_j)$ for every $i \in [k]$ and $j \in [|s_i|]$
. Every $r_i$ is a~contiguous subsequence of $\alpha$ and at the same time $\alpha \c \eta$
is merged from $r_1, \ldots, r_k$.

On the other hand, if a~permutation $\pi$ is obtained from $\alpha \in \A$ by dividing it into $k$~contiguous subsequences
$r_1, \ldots, r_k$ and merging them in some way, we define $k$~sequences $s_1, \ldots, s_k$
such that $s_i$ is the sequence of indices of the elements of $r_i$ in $\alpha$. Then by definition $\alpha((s_i)_j) = (r_i)_j$
for any suitable $i$~and $j$, and since we divided $\alpha$
into contiguous subsequences, each $s_i$ is a~sequence of consecutive integers.
Now consider the permutation $\eta$ created by replacing the subsequence $r_i$ by the sequence $s_i$ in $\pi$ for every $i$.
Then $\eta$ is merged from $s_1, \ldots, s_k$, which are sequences of consecutive integers, therefore $\eta \in \Ho$.
At the same time, for any $m \in \{1, 2, \ldots, |\pi|\}$ there are indices $i$~and $j$~such that $\pi(m) = (r_i)_j =
\alpha((s_i)_j) = \alpha(\eta(m))$, where the last equality holds because we replaced $(r_i)_j$ by $(s_i)_j$ when
constructing $\eta$ from $\pi$. Therefore $\pi = \alpha \c \eta$.

%On the other hand, if a~permutation $\pi$ is obtained from $\alpha \in \A$ by dividing it into $k$ contiguous subsequences
%$r_1, \ldots, r_k$ and merging them in some way, we define $k$ sequences of consecutive integers $s_1, \ldots, s_k$
%such that $s_1s_2\cdots s_k = 123\cdots|\pi|$ and $|s_i| = |r_i|$ for every $i$. From this definition we get
%that $\alpha((s_i)_j) = (r_i)_j$. Now we replace each subsequence $r_i$ in $\pi$ by $s_i$, creating a new permutation
%$\eta$ merged from the sequences $s_1, \ldots, s_k$. Since $s_i$ were defined as sequences of consecutive integers, we 
%get $\eta \in \Ho_k$. It remains to observe that $\alpha \c \eta = \pi$ because $\alpha((s_i)_j) = (r_i)_j$.

(b):
Consider the permutation $\alpha \c \nu \in \A \c \V_k$. The permutation $\nu$ is formed by concatenating $k$ increasing sequences
$s_1, \ldots, s_k$. Define $k$ sequences $r_1, \ldots, r_k$ such that $|r_i| = |s_i|$ and $(r_i)_j= \alpha((s_i)_j)$.
Each $r_i$ is a~subsequence of $\alpha$ and at the same time $\alpha \c \nu = r_1r_2\cdots r_k$.
 
On the other hand, if a~permutation $\pi$ is obtained from $\alpha \in \A$ by dividing it into $k$ subsequences
$r_1, \ldots, r_k$ and then concatenating them, we define $k$ sequences $s_1, \ldots, s_k$ such that
$s_i$ is the sequence of indices of elements of $r_i$ in $\alpha$. Thus every $s_i$ is an increasing sequence
and $\alpha((s_i)_j) = (r_i)_j$.
Consider a permutation $\nu$ created by replacing the subsequence $r_i$ by the sequence $s_i$ in $\pi$ for every $i$.
Since $\pi$ is a concatenation of $r_1, \ldots, r_k$, we get that $\nu$ is a concatenation of $s_1, \ldots, s_k$
and thus $\nu \in \V_k$.
Also, for any $m \in \{1, 2, \ldots, |\pi|\}$ there are indices $i$~and $j$~such that $\pi(m) = (r_i)_j =
\alpha((s_i)_j) = \alpha(\nu(m))$, where the last equality holds because we replaced $(r_i)_j$ by $(s_i)_j$ when
constructing $\nu$ from $\pi$. Therefore $\pi = \alpha \c \nu$.
%On the other hand, if a~permutation $\pi$ is obtained from $\alpha \in \A$ by dividing it into $k$ subsequences
%$r_1, \ldots, r_k$ and then concatenating them, we define $k$ sequences $s_1, \ldots, s_k$ such
%that $|s_i| = |r_i|$ and $(s_i)_j = \alpha^{-1}((r_i)_j)$ and let $\nu = s_1s_2\cdots s_k$.
%Since $s_i$ is by definition increasing for every $i$, we get $\nu \in \V_k$ and since
%$\alpha((s_i)_j) = (r_i)_j$, we get $\pi = \alpha \c \nu$.

(c): The proof is similar to the proofs of (a) and (b).
%Consider the permutation $\alpha \c \iota \in \A \c \I_k$. The permutation $\iota$ is merged from $k$
%increasing subsequences $s_1, \ldots, s_k$. We define $k$ sequences $r_1, \ldots, r_k$
%such that $|s_i| = |r_i|$ and $(r_i)_j = \alpha((s_i)_j)$. Clearly $\alpha \c \iota$ is merged 
%from the sequences $r_1, \ldots, r_k$ and at the same time each $r_i$ is a~subsequence of $\alpha$.
%
%On the other hand, if a~permutation $\pi$ is obtained from $\alpha \in \A$ by dividing it into $k$
%subsequences $r_1, \ldots, r_k$ and the merging them, we define $k$ sequences $s_1, \ldots, s_k$
%such that $|s_i| = |r_i|$ and $(s_i)_j = \alpha^{-1}((r_i)_j)$. We then define the permutation $\iota$
%as $\iota(\pi^{-1}((r_i)_j)) = (s_i)_j$. Every $s_i$ is by definition increasing and $\iota$ is merged from
%$s_1, \ldots, s_k$, therefore $\iota \in \I_k$. Also, since $\alpha((s_i)_j) = (r_i)_j$, we get
%$\pi = \alpha \c \iota$ from the definition of $\iota$ by multiplying by $\alpha$ from the left.
\end{proof}

%%%%%%%%%%%%%%%%%%%%%%%%%%%%%%%%%
%%%%%%%%% Section 2 %%%%%%%%%%%%%
%%%%%%%%%%%%%%%%%%%%%%%%%%%%%%%%%

\subsection{Composability results}

Using the machinery introduced in the previous section
we  now prove a~key lemma which we will use to show several composability results.

\begin{lemma}\label{important_lemma}

Let $\C_1, \C_2, \ldots, \C_k$ be arbitrary permutation classes.
Then \begin{equation*}\C_1 \odot \C_2 \odot \cdots \odot \C_k \subseteq \V(\C_1, \ldots, \C_k) \c \Ho_k.\end{equation*}
\end{lemma}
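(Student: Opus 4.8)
The plan is to take an arbitrary permutation $\pi \in \C_1 \odot \C_2 \odot \cdots \odot \C_k$ and exhibit it as a composition $\nu \c \eta$ with $\nu \in \V(\C_1, \ldots, \C_k)$ and $\eta \in \Ho_k$. By definition of the iterated merge, $\pi$ comes with a colouring of its positions into $k$ colour classes, where the subsequence of colour $i$ is order-isomorphic to some permutation in $\C_i$. The key observation is that this colouring partitions the \emph{values} $\{1, \ldots, n\}$ of $\pi$ into $k$ sets $V_1, \ldots, V_k$ according to the colour of the position carrying that value. I would then use $\eta \in \Ho_k$ to ``sort the values by colour'': choose $\eta$ to be the permutation that relabels positions so that all positions whose value lies in $V_1$ come first, then those in $V_2$, and so on, keeping the relative order of positions within each block. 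Concretely, $\eta^{-1}$ takes the indices of colour $1$ (in increasing order), then the indices of colour $2$, etc.; since within each colour block $\eta^{-1}$ is increasing and the blocks are consecutive, $\eta \in \Ho_k$ — actually it is cleaner to describe $\eta$ directly via Lemma~\ref{lemma_behaviour}(a): $\A \c \Ho_k$ is exactly ``divide into $k$ contiguous subsequences and interleave''.

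That suggests the slicker route: by Lemma~\ref{lemma_behaviour}(a), $\nu \c \Ho_k$ (for a single $\nu$) is the class of all permutations obtainable from $\nu$ by cutting $\nu$ into at most $k$ contiguous blocks and interleaving those blocks arbitrarily. So it suffices to find $\nu \in \V(\C_1, \ldots, \C_k)$ such that $\pi$ arises from $\nu$ by such a cut-and-interleave operation. Given $\pi$ with its colouring, define $\nu$ as follows: list the elements of $\pi$ of colour $1$ in their original left-to-right order, then the elements of colour $2$, and so on, but relabel the values so that within block $i$ we keep the relative order of the values as they appeared in $\pi$ — in other words, $\nu = s_1 s_2 \cdots s_k$ where $s_i$ is order-isomorphic to the colour-$i$ subsequence of $\pi$, laid out so that $\nu$ is a genuine permutation. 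Since the colour-$i$ subsequence of $\pi$ is order-isomorphic to a permutation of $\C_i$, this $\nu$ is a concatenation of blocks of the required order-types, hence $\nu \in \V(\C_1, \ldots, \C_k)$ by definition. Now $\pi$ is obtained from $\nu$ precisely by cutting $\nu$ into the $k$ contiguous blocks $s_1, \ldots, s_k$ and interleaving them in the way dictated by the original positions of the colour classes in $\pi$; by Lemma~\ref{lemma_behaviour}(a) this means $\pi \in \nu \c \Ho_k \subseteq \V(\C_1, \ldots, \C_k) \c \Ho_k$.

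The one point demanding care is the relabelling of values when forming $\nu$: cutting $\nu$ into contiguous blocks and re-interleaving them is a purely positional operation and must recover $\pi$'s \emph{pattern} exactly, so I need the value-order within each block $s_i$ of $\nu$ to match the value-order of the corresponding colour class in $\pi$, and I need the across-block comparisons to come out right as well. The clean way to guarantee this is to not worry about across-block value comparisons in $\nu$ at all — Lemma~\ref{lemma_behaviour}(a) is a statement about the permutation class $\A \c \Ho_k$, so I am free to pick whichever representative $\nu$ of the right $\V$-pattern makes the bookkeeping trivial; any permutation whose block-decomposition $s_1 \cdots s_k$ has $s_i$ order-isomorphic to $\pi$'s colour-$i$ subsequence will do, and standardising the merged sequence in the obvious way produces one. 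So the main (minor) obstacle is just to state this standardisation precisely and check that the cut-and-interleave operation of Lemma~\ref{lemma_behaviour}(a) applied to it returns $\pi$; everything else is immediate from the definitions and Lemma~\ref{lemma_behaviour}(a).
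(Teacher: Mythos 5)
Your construction is essentially the paper's proof: concatenate the colour-class subsequences of $\pi$ to obtain $\nu=s_1s_2\cdots s_k\in\V(\C_1,\ldots,\C_k)$, observe that $\pi$ arises from $\nu$ by cutting it back into these $k$ contiguous blocks and interleaving them according to the original positions, and conclude via Lemma~\ref{lemma_behaviour}(a). That is exactly the argument in the paper, and it is correct.

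One assertion in your final paragraph, however, is false and you should delete it rather than lean on it: it is \emph{not} true that ``any permutation whose block-decomposition $s_1\cdots s_k$ has $s_i$ order-isomorphic to $\pi$'s colour-$i$ subsequence will do.'' The across-block value comparisons do matter. For instance, take $\pi=132\in\I\odot\I$ with colour classes $13$ and $2$; the permutation $\nu'=231$ has block decomposition $(2,3)(1)$ with the same block patterns ($12$ and $1$), yet the cut-and-interleave operation of Lemma~\ref{lemma_behaviour}(a) applied to $\nu'$ only yields $231$, $213$, $123$, never $132$. The point is that you do not need this freedom at all: since the colour classes partition the values $\{1,\ldots,n\}$, the concatenation of the actual colour subsequences of $\pi$ (keeping their original values) is already a permutation of $[n]$ --- no relabelling or standardisation is required --- each of its blocks is order-isomorphic to a member of $\C_i$, so it lies in $\V(\C_1,\ldots,\C_k)$, and re-interleaving its blocks into the original positions returns $\pi$ on the nose, values included. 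With that sentence replaced by this observation, your proof coincides with the paper's.
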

\begin{proof}
Consider a~permutation $\pi \in \C_1 \odot \cdots \odot \C_k$ and divide
it into $k$ sequences $s_1, \ldots, s_k$ such that $s_i$ is isomorphic to a~permutation from $\C_i$.
The permutation $\nu = s_1s_2\cdots s_k$ then lies in $\V(\C_1, \ldots, \C_k)$,
which together with Lemma \ref{lemma_behaviour}(a) implies $\pi \in \V(\C_1, \ldots, \C_k) \c \Ho_k$.
\end{proof}

By reformulating the previous statement we immediately get the following.

\begin{cor}\label{cor_splitcompose}
Let $\A$, $\B$ and $\C$ be permutation classes such that $\C \subseteq \A \odot \B$. Then $\C \subseteq \V(\A,\B) \c \Ho$.
\end{cor}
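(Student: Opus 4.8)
The plan is to simply specialise Lemma \ref{important_lemma} to the case $k=2$. Setting $\C_1 = \A$ and $\C_2 = \B$ in that lemma yields
\begin{equation*} \A \odot \B \subseteq \V(\A, \B) \c \Ho_2. \end{equation*}
Recalling the convention $\Ho = \Ho_2$ introduced just after Figure \ref{figure_VH}, the right-hand side is exactly $\V(\A,\B) \c \Ho$.

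It then remains only to combine this with the hypothesis $\C \subseteq \A \odot \B$. Since set inclusion is transitive, we obtain $\C \subseteq \A \odot \B \subseteq \V(\A,\B) \c \Ho$, which is the desired conclusion.

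There is no real obstacle here: the entire content has already been carried out inside the proof of Lemma \ref{important_lemma} (divide a permutation of $\A \odot \B$ into a part isomorphic to a member of $\A$ followed by a part isomorphic to a member of $\B$ to land in $\V(\A,\B)$, then invoke Lemma \ref{lemma_behaviour}(a) to absorb the interleaving into a composition with $\Ho$). The only thing to be careful about is the bookkeeping of which class plays the role of $\C_i$ and noting that $\Ho_2 = \Ho$ by definition; both are routine.
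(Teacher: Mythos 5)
Your proof is correct and matches the paper exactly: the paper also obtains the corollary as an immediate specialisation of Lemma \ref{important_lemma} to $k=2$ (with $\C_1=\A$, $\C_2=\B$ and $\Ho_2=\Ho$), combined with transitivity of inclusion.
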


Using what has already been shown in this section it is now elementary to show that $\I_k$ is 2-composable.

\begin{thm} \label{thm_Ik}
The class $\I_k$ is 2-composable for every $k \geq 2$. In particular, $\I_k \subseteq \V_k \c \Ho_k$.
\end{thm}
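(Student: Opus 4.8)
The plan is to derive the two asserted statements from the machinery already built up, with the inclusion $\I_k \subseteq \V_k \c \Ho_k$ being the crux and the $2$-composability then following by a quick subclass-properness check. First I would recall that by definition $\I_k = \I \odot \I \odot \cdots \odot \I$ ($k$ times), which was observed explicitly in the excerpt. Then I would simply invoke Lemma~\ref{important_lemma} with $\C_1 = \C_2 = \cdots = \C_k = \I$: it gives
\begin{equation*}
\I_k = \underbrace{\I \odot \cdots \odot \I}_{k} \subseteq \V(\I, \ldots, \I) \c \Ho_k = \V_k \c \Ho_k,
\end{equation*}
where the last equality is just the definition of $\V_k$ (the vertical merge of $k$ copies of $\I$). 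This establishes the ``in particular'' clause essentially for free.

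Next I would verify that this inclusion actually witnesses $2$-composability, i.e.\ that both $\V_k$ and $\Ho_k$ are \emph{proper} subclasses of $\I_k$. For the containment $\V_k \subseteq \I_k$: a permutation that is a concatenation of $k$ increasing runs is in particular merged from $k$ increasing subsequences, so $\V_k \subseteq \I_k$; and it is proper because, say, the decreasing permutation $k(k-1)\cdots 21$ lies in $\I_k$ but not in $\V_k$ for $k \geq 2$ (concatenating increasing blocks cannot produce a fully decreasing permutation of length $k$ unless each block has length $1$, which forces an increasing permutation). For $\Ho_k$: by Lemma~\ref{lemma_VHinvert} we have $\Ho_k = (\V_k)^{-1}$, and since $\I_k = \Av((k+1)\cdots 21)$ is closed under inversion (the inverse of a permutation avoiding a decreasing pattern of length $k+1$ again avoids it, as containment is inverse-symmetric), $\Ho_k \subseteq \I_k$ properly as well — alternatively one checks directly that $\Ho_k \subsetneq \I_k$ by the same kind of witness. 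Hence $\I_k$ is composable from the two proper subclasses $\V_k$ and $\Ho_k$, which is exactly $2$-composability.

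The only mildly delicate point — and what I would treat as the main obstacle — is the properness of $\V_k$ (equivalently $\Ho_k$) inside $\I_k$; the inclusions themselves and the reduction to Lemma~\ref{important_lemma} are routine. I would make the properness precise by exhibiting an explicit separating permutation: for $\V_k$, the permutation $21 \ominus 21 \ominus \cdots$ or more simply the single block $k\cdots 21$ is in $\I_k$ (it is merged from $k$ singletons, each increasing) but any writing of it as a concatenation $s_1 \cdots s_k$ of increasing sequences would need each $s_i$ to be a single point (since the whole permutation is strictly decreasing and any two points within one block would violate increasingness), forcing $k$ blocks of size one in a permutation of size $k$, hence the permutation itself must be increasing — contradiction for $k \geq 2$. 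This finishes the argument; the statement for general $k \geq 2$ follows uniformly with no case analysis.
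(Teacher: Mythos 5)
Your main step is exactly the paper's: recall $\I_k = \I \odot \cdots \odot \I$ and apply Lemma~\ref{important_lemma} with $\C_1 = \cdots = \C_k = \I$ to get $\I_k \subseteq \V_k \c \Ho_k$. That part is correct and identical in spirit to the published proof, which then simply declares $\V_k \subsetneq \I_k$ and $\Ho_k \subsetneq \I_k$ to be trivial. The genuine problem is in the part you single out as the delicate point: your properness witness is wrong. The decreasing permutation $\delta_k = k(k-1)\cdots 21$ \emph{does} belong to $\V_k$, since it is the concatenation of $k$ singleton blocks, each of which is (trivially) increasing; nothing in the definition of $\V_k$ requires the blocks to use consecutive values or to have length greater than one. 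Your parenthetical ``each block has length $1$, which forces an increasing permutation'' is a non sequitur: blocks of length $1$ impose no constraint at all, and indeed every permutation of length at most $k$ lies in $\V_k$. The same objection kills the alternative witness $21 \ominus 21 \ominus \cdots$, which is again just a decreasing permutation. So any separating permutation must have length greater than $k$.

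The gap is easily repaired. Take $\pi = \bigoplus_{i=1}^{k} 21 = 2143\cdots(2k)(2k-1)$. This is a layered permutation with all layers of size $2$, so its longest decreasing subsequence has length $2$; hence $\pi \in \I_2 \subseteq \I_k$ for $k \geq 2$. On the other hand, $\pi$ has $k$ descents, while any member of $\V_k$ has descents only at the at most $k-1$ boundaries between its increasing blocks, so $\pi \notin \V_k$ and $\V_k \subsetneq \I_k$. Since $\pi$ is an involution and $\Ho_k = (\V_k)^{-1}$ by Lemma~\ref{lemma_VHinvert} (using $\I^{-1} = \I$), the same permutation shows $\Ho_k \subsetneq \I_k$; your inversion argument for the inclusion $\Ho_k \subseteq \I_k$ is fine. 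With this substitution your proof becomes a correct, slightly more detailed version of the paper's.
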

\begin{proof}
Trivially $\V_k \subsetneq \I_k$ and $\Ho_k \subsetneq \I_k$.
Next we recall that
\begin{equation*}\I_k = \underbrace{\I \odot \cdots \odot \I}_{k\times}\end{equation*} 
and use Lemma \ref{important_lemma}  for $\C_1 = \C_2 = \cdots = \C_k = \I$.
\end{proof}

We proceed by proving a~result in some sense opposite to that of Lemma \ref{lemma_kl}, namely
we show that $\I_k$ may be constructed from smaller $\I_a, \I_b$ using composition.
\begin{thm}\label{thm_k+l-1}
$\I_{k+l-1} \subseteq \I_k \c \I_l$ for all integers $k, l \geq 2$.
\end{thm}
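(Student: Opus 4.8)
The plan is to realize every permutation in $\I_{k+l-1}$ as a composition of a permutation from $\I_k$ with a permutation from $\I_l$, using Lemma~\ref{lemma_behaviour}(c), which says that $\A \c \I_l$ is exactly the class of permutations obtainable from a permutation of $\A$ by splitting it into at most $l$ subsequences and interleaving them arbitrarily. So it suffices to show: every $\pi \in \I_{k+l-1}$ can be split into at most $l$ subsequences, each of which, after relabelling, lies in $\I_k$; equivalently, $\pi$ can be partitioned into $k+l-1$ increasing subsequences $T_1, \ldots, T_{k+l-1}$ which can be grouped into at most $l$ blocks so that within each block, the union of the chosen increasing runs forms a permutation merged from at most $k$ increasing sequences.

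The key point is the following: if I take a partition of $\pi$ into $k+l-1$ increasing sequences (which exists by Fact~\ref{basicfact}) and simply group them as $l-1$ singleton blocks $T_1, \ldots, T_{l-1}$ together with one block $T_l \cup T_{l+1} \cup \cdots \cup T_{k+l-1}$ containing the remaining $k$ increasing sequences, then each of the first $l-1$ blocks is a single increasing sequence (hence trivially in $\I_1 \subseteq \I_k$), and the last block is by construction a merge of $k$ increasing sequences, hence order-isomorphic to a permutation in $\I_k$. Thus $\pi$ is obtained from a permutation $\alpha \in \I_k$ (namely: reindex the block $T_l \cup \cdots \cup T_{k+l-1}$ to occupy positions $1, \ldots, |T_l| + \cdots + |T_{k+l-1}|$ and positions/values consistently, appending the relabelled singletons) by dividing $\alpha$ into $l$ subsequences and interleaving them to recover $\pi$. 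Formally I would spell out the relabelling: list $\pi$'s entries grouped block by block, let $\alpha$ be the pattern of that concatenated listing, and let $\varphi \in \I_l$ be the permutation recording which block each position of $\pi$ came from and in what order — then $\pi = \alpha \c \varphi$ with $\alpha \in \I_k$ and $\varphi \in \I_l$ by Lemma~\ref{lemma_behaviour}(c).

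I expect the main obstacle to be purely bookkeeping: making the relabelling precise so that $\alpha$ genuinely lies in $\I_k$ and the interleaving permutation genuinely lies in $\I_l$. The conceptual content is minimal — it is essentially the observation that $\I_{k+l-1} = \I_k \odot \I_{l-1}$ combined with the fact that merging with $\I_{l-1}$ extra increasing sequences is a special case of the "divide into $l$ pieces and interleave" operation, since $l-1$ of the pieces are forced to be increasing. Indeed a cleaner route may be: by $\I_{k+l-1} = \I_k \odot \underbrace{\I \odot \cdots \odot \I}_{l-1}$, write $\pi \in \I_{k+l-1}$ as merged from $\alpha' \in \I_k$ and increasing sequences $s_2, \ldots, s_l$; the underlying permutation $\alpha$ obtained by placing $\alpha'$ first, then $s_2, \ldots, s_l$ after it (suitably relabelled) lies in $\I_k$ because appending increasing runs at the top can only be absorbed — wait, one must instead simply note $\alpha$ is order-isomorphic to an element of $\I_k \odot \I_{l-1}$, not $\I_k$; so the grouping-into-$l$-blocks description above is the safe one, and I would present that. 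The verification that the last block, a merge of exactly $k$ increasing sequences, is order-isomorphic to a member of $\I_k$ is immediate from the definition $\I_k = \I \odot \cdots \odot \I$, and that $\varphi \in \I_l$ is immediate because $\varphi$ is a merge of $l$ increasing sequences (one per block, each listing that block's positions in increasing order).
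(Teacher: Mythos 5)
Your reduction to Lemma~\ref{lemma_behaviour}(c) is the right framework, and the claim that the interleaving permutation lies in $\I_l$ is fine, but the central claim of your proposal --- that the auxiliary permutation $\alpha$ obtained by listing the big block first and then appending the $l-1$ increasing runs (with their original values) lies in $\I_k$ --- is false in general, and that is exactly where the real work is needed. A concatenation of a block order-isomorphic to a member of $\I_k$ with $l-1$ increasing blocks is only guaranteed to lie in $\I_{k+l-1}$: a decreasing subsequence of length $k+1$ can take $k$ descending elements inside the big block followed by a later element of an appended run whose value is smaller. Concretely, take $k=l=2$, $\pi=321\in\I_3$, and the partition $T_1=(1)$, $T_2=(3)$, $T_3=(2)$; your recipe gives the big block $(3,2)$ followed by the singleton $(1)$, i.e.\ $\alpha=321\notin\I_2$. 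The objection you raise yourself against the ``cleaner route'' (that $\alpha$ is only order-isomorphic to an element of $\I_k\odot\I_{l-1}$, not of $\I_k$) applies verbatim to the grouping-into-$l$-blocks version, since both constructions produce the same kind of concatenation; declaring the latter ``safe'' does not close the gap, and you give no rule for choosing the partition or grouping that would.

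The paper's proof supplies precisely the missing idea. Write $\pi$ as a merge of $a$ (merged from $k$ increasing sequences $s_1,\dots,s_k$) and $b$ (merged from $l-1$ increasing sequences $s_{k+1},\dots,s_{k+l-1}$), sort the elements of $b$ into one increasing sequence $c$, and insert $c$ into $a$ so that $c$ and $s_k$ together form a single increasing sequence. The resulting permutation $\sigma$ has the same values as $\pi$, is merged from the $k$ increasing sequences $s_1,\dots,s_{k-1},\,s_k\cup c$, hence genuinely lies in $\I_k$, and each $s_{k+j}$ is still a subsequence of $\sigma$ appearing in the same (increasing) relative order as in $\pi$, because sorting $b$ does not disturb an increasing subsequence of $b$. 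Dividing $\sigma$ into the $l$ parts $a,s_{k+1},\dots,s_{k+l-1}$ and interleaving then recovers $\pi$, so Lemma~\ref{lemma_behaviour}(c) applies. The step you are missing is this re-positioning (sorting and fusing) of the extra sequences' values so that they are absorbed into one of the $k$ increasing sequences, rather than appended where their original values can extend a descent.
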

\begin{proof}

Consider a~permutation $\pi \in \I_{k+l-1}$, merged from two sequences $a$ and $b$
such that $a$ is merged from $k$ increasing sequences $s_1, \ldots, s_k$ and $b$ is merged from $l-1$ increasing sequences 
$s_{k+1}, \ldots, s_{k+l-1}$. Let $c$ be the increasing sequence created by sorting the elements of $b$.
Consider a~permutation $\sigma$ created by merging the sequences $a$ and $c$ so that $c$ and $s_k$ form a~single
increasing sequence. Clearly $\sigma \in \I_k$ and sequences $s_{k+1}, \ldots, s_{k+l-1}$ are subsequences
of $\sigma$, since they are increasing and therefore were not affected by sorting $b$.

According to Lemma \ref{lemma_behaviour}(c) the class $\I_k \c \I_l$ contains all permutations we can create from $\sigma$ by dividing
it into $l$ subsequences and merging them in any way. It is therefore enough to find a~way to divide $\sigma$
into $l$ subsequences which can be merged into $\pi$. A~simple choice of $l$ such subsequences is
$a, s_{k+1}, \ldots, s_{k+l-1}$.
\end{proof}

This theorem raises the question whether we could construct a~bigger class from given $\I_k$ and $\I_l$.

\begin{question}
Given positive integers $k$ and $l$, what is the largest integer $m = m(k,l)$ such that $\I_m \subseteq \I_k \c \I_l$?
\end{question}

So far we have shown that $m(k,l) \leq kl$ (Lemma \ref{lemma_kl})  and that \\
$m(k,l) \geq k+l-1$ (Theorem \ref{thm_k+l-1}). It is also not difficult to show the sharp
inequality $m(k,l) < kl$ by constructing a permutation $\pi \in \I_{kl} \setminus (\I_k \c \I_l)$.

\section{Classes of layered patterns}
\label{ch4}

In this section we cover classes of permutations which can be written as a~sum or as a~skew sum
of increasing or decreasing permutations. Among these classes we  provide
infinitely many examples of composable classes as well as several examples of classes which are
not composable.

Let $\iota_k$ denote the increasing permutation of order $k$ and
$\delta_k$ denote the decreasing permutation of order $k$.
A permutation is \emph{layered} if it is a~sum of decreasing permutations
which are then called \emph{layers}. We let $\La$ denote the class of all layered permutations. We let $\La_k$ denote
the class of permutations which are sums of at most $k$ layers. 
The complement of a~layered permutation is clearly a~skew sum of increasing permutations and we 
call such a~permutation \emph{co-layered}. The class $\La^c$ consists of precisely the co-layered permutations.

\begin{figure}[h]
\centering
\subfloat[][Layered permutation] {
\begin{tikzpicture}[line cap=round,line join=round,>=triangle 45,x=0.5cm,y=0.5cm]
\clip(-4.12,-6.72) rectangle (8.12,5.62);
\draw [line width=1.2pt] (-4,5.5)-- (-4,-6.5);
\draw [line width=1.2pt] (-4,-6.5)-- (8,-6.5);
\draw [line width=1.2pt] (8,-6.5)-- (8,5.5);
\draw [line width=1.2pt] (8,5.5)-- (-4,5.5);
\draw [line width=1.2pt] (8.5,5.5)-- (8.5,-6.5);
\draw [line width=1.2pt] (8.5,-6.5)-- (20.5,-6.5);
\draw [line width=1.2pt] (20.5,-6.5)-- (20.5,5.5);
\draw [line width=1.2pt] (20.5,5.5)-- (8.5,5.5);
\begin{scriptsize}
\fill [color=black] (-3.5,-4) circle (2.0pt);
\fill [color=black] (-2.5,-5) circle (2.0pt);
\fill [color=black] (-1.5,-6) circle (2.0pt);
\fill [color=black] (-0.5,-2) circle (2.0pt);
\fill [color=black] (0.5,-3) circle (2.0pt);
\fill [color=black] (1.5,4) circle (2.0pt);
\fill [color=black] (2.5,3) circle (2.0pt);
\fill [color=black] (3.5,2) circle (2.0pt);
\fill [color=black] (4.5,1) circle (2.0pt);
\fill [color=black] (5.5,0) circle (2.0pt);
\fill [color=black] (6.5,-1) circle (2.0pt);
\fill [color=black] (7.5,5) circle (2.0pt);
\fill [color=black] (9,3) circle (2.0pt);
\fill [color=black] (10,4) circle (2.0pt);
\fill [color=black] (11,5) circle (2.0pt);
\fill [color=black] (12,-1) circle (2.0pt);
\fill [color=black] (13,0) circle (2.0pt);
\fill [color=black] (14,1) circle (2.0pt);
\fill [color=black] (15,2) circle (2.0pt);
\fill [color=black] (16,-3) circle (2.0pt);
\fill [color=black] (17,-2) circle (2.0pt);
\fill [color=black] (18,-6) circle (2.0pt);
\fill [color=black] (19,-5) circle (2.0pt);
\fill [color=black] (20,-4) circle (2.0pt);
\end{scriptsize}
\end{tikzpicture}
}
\subfloat[][Co-layered permutation] {
\begin{tikzpicture}[line cap=round,line join=round,>=triangle 45,x=0.5cm,y=0.5cm]
\clip(8.42,-6.72) rectangle (20.66,5.62);
\draw [line width=1.2pt] (-4,5.5)-- (-4,-6.5);
\draw [line width=1.2pt] (-4,-6.5)-- (8,-6.5);
\draw [line width=1.2pt] (8,-6.5)-- (8,5.5);
\draw [line width=1.2pt] (8,5.5)-- (-4,5.5);
\draw [line width=1.2pt] (8.5,5.5)-- (8.5,-6.5);
\draw [line width=1.2pt] (8.5,-6.5)-- (20.5,-6.5);
\draw [line width=1.2pt] (20.5,-6.5)-- (20.5,5.5);
\draw [line width=1.2pt] (20.5,5.5)-- (8.5,5.5);
\begin{scriptsize}
\fill [color=black] (-3.5,-4) circle (2.0pt);
\fill [color=black] (-2.5,-5) circle (2.0pt);
\fill [color=black] (-1.5,-6) circle (2.0pt);
\fill [color=black] (-0.5,-2) circle (2.0pt);
\fill [color=black] (0.5,-3) circle (2.0pt);
\fill [color=black] (1.5,4) circle (2.0pt);
\fill [color=black] (2.5,3) circle (2.0pt);
\fill [color=black] (3.5,2) circle (2.0pt);
\fill [color=black] (4.5,1) circle (2.0pt);
\fill [color=black] (5.5,0) circle (2.0pt);
\fill [color=black] (6.5,-1) circle (2.0pt);
\fill [color=black] (7.5,5) circle (2.0pt);
\fill [color=black] (9,3) circle (2.0pt);
\fill [color=black] (10,4) circle (2.0pt);
\fill [color=black] (11,5) circle (2.0pt);
\fill [color=black] (12,-1) circle (2.0pt);
\fill [color=black] (13,0) circle (2.0pt);
\fill [color=black] (14,1) circle (2.0pt);
\fill [color=black] (15,2) circle (2.0pt);
\fill [color=black] (16,-3) circle (2.0pt);
\fill [color=black] (17,-2) circle (2.0pt);
\fill [color=black] (18,-6) circle (2.0pt);
\fill [color=black] (19,-5) circle (2.0pt);
\fill [color=black] (20,-4) circle (2.0pt);
\end{scriptsize}
\end{tikzpicture}
}
\caption{Examples of layered and co-layered patterns}
\end{figure}
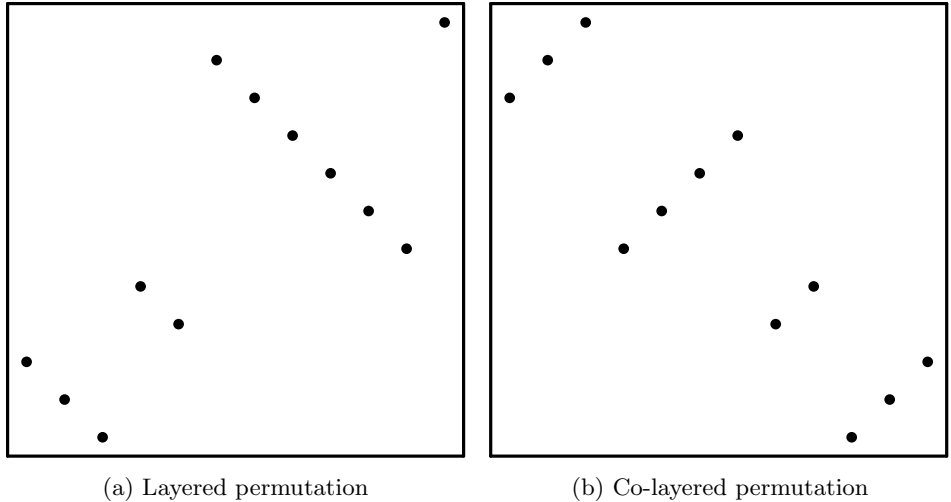

We start by proving that $\La_2$ is not composable using a~counting argument. As it turns out,
proper subclasses of $\La_2$ are asymptotically too small to build the entire $\La_2$ class
using composition.

\begin{thm} \label{l2}
The class $\La_2$ is not composable.
\end{thm}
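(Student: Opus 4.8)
The plan is to argue by a counting argument: I will show that $\La_2$ contains many more permutations of length $n$ than any composition $\C_1 \c \cdots \c \C_k$ of finitely many proper subclasses can produce. First I observe that $|\La_2 \cap \S_n| = n$, since a permutation in $\La_2$ of length $n$ is determined by the size of its first (decreasing) layer, which can be any value in $\{0, 1, \ldots, n\}$ — but accounting for the degenerate cases this gives exactly $n$ distinct permutations of each length $n \geq 1$ (the layered permutations $\delta_j \oplus \delta_{n-j}$). So $\La_2$ grows linearly.

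Next I analyse proper subclasses of $\La_2$. Since every permutation of $\La_2$ has the form $\delta_j \oplus \delta_{n-j}$, a proper subclass $\C \subsetneq \La_2$ must avoid some $\delta_a \oplus \delta_b \in \La_2$; I claim this forces $\C$ to be finite. Indeed, if $\C$ avoids $\delta_a \oplus \delta_b$ with $a, b \geq 1$, then for $\pi = \delta_j \oplus \delta_{n-j}$ to lie in $\C$ we cannot have both $j \geq a$ and $n - j \geq b$, so either $j < a$ or $n - j < b$; hence $n < a + b$ whenever $\pi$ is nontrivial in both layers, and the only permutations of arbitrarily large length that could survive are the single-layer ones $\delta_n$ and $\iota_n = \delta_1 \oplus \cdots$ — but these also contain $\delta_a \oplus \delta_b$ once $n \geq \max(a,b) + 1$ in the decreasing case, and $\iota_n$ already contains $\delta_a \oplus \delta_b$ only if $a = b = 1$. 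Handling the boundary patterns $\delta_a \oplus \delta_1 = \delta_a \oplus 1$ and $1 \ominus$-type degeneracies carefully, one concludes: every proper subclass of $\La_2$ is finite, or at least has at most linearly many — actually boundedly many beyond a point — permutations of each length. The cleanest statement to extract is that there is a constant $c_\C$ with $|\C \cap \S_n| \leq c_\C$ for all $n$ when $\C \subsetneq \La_2$, or even $\C$ is finite; I will verify which of these holds and use whichever is true.

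Now suppose for contradiction that $\La_2 \subseteq \C_1 \c \cdots \c \C_k$ with each $\C_i \subsetneq \La_2$. Since $\C_i \c \cdots$ only composes permutations of equal length, the number of permutations of length $n$ in $\C_1 \c \cdots \c \C_k$ is at most $\prod_{i=1}^k |\C_i \cap \S_n|$. If each $\C_i$ is finite this product is eventually $0$, already a contradiction with $|\La_2 \cap \S_n| = n \to \infty$. If instead the bound is $|\C_i \cap \S_n| \leq c_i$ for a constant, then $|\C_1 \c \cdots \c \C_k \cap \S_n| \leq \prod c_i$, a constant independent of $n$, again contradicting $|\La_2 \cap \S_n| = n$. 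Either way we reach a contradiction, so $\La_2$ is not composable.

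The main obstacle is the structural claim that every proper subclass of $\La_2$ has boundedly many (ideally: finitely many in total) permutations of each length. This requires a careful case analysis of which basis permutations can be avoided: the "generic" patterns $\delta_a \oplus \delta_b$ with $a, b \geq 2$ clearly truncate the class to finitely many permutations, but one must also treat patterns like $21 \oplus \iota_{b}$ (i.e. $\delta_2 \oplus \iota_b$), $\iota_a \oplus 21$, and the increasing and decreasing patterns $\iota_m$, $\delta_m$ themselves, checking in each case that the surviving subclass of $\La_2$ is either finite or has a uniformly bounded number of elements per length. Once that structural dichotomy is nailed down, the counting contradiction is immediate.
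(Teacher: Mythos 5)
Your strategy is exactly the paper's: show that every proper subclass of $\La_2$ contains only a bounded number of permutations of each length, bound the composition $\C_1 \c \cdots \c \C_k$ by the product of these per-length counts, and contradict the fact that $|\La_2 \cap \S_N| = N$ grows with $N$. That skeleton is sound, and your product bound and final contradiction are correct. The one place your write-up does not hold together is the structural claim itself, which you leave explicitly unverified and whose attempted justification contains two errors: from ``$j < a$ or $N-j < b$'' you cannot conclude $N < a+b$ (take $j=1$, $a=2$ and $N-j$ huge), and the alternative you float, that every proper subclass of $\La_2$ is finite, is false --- $\D = \Av(12)$ is an infinite proper subclass of $\La_2$. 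The correct statement is the bounded-per-length one, and it needs no case analysis on the shape of the avoided pattern: if $\C \subsetneq \La_2$ avoids some $\pi \in \La_2$, choose $m$ with $\pi \leq \delta_m \oplus \delta_m$; then every $\delta_j \oplus \delta_{N-j} \in \C$ must have $j < m$ or $N-j < m$, so $\C$ has at most $2m$ permutations of each length $N$. This is precisely how the paper argues, and with this constant $c_i$ per class your concluding count $\prod_i c_i < N$ for large $N$ finishes the proof.
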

\begin{proof}
Suppose that $\La_2 \subseteq \C_1 \c \C_2 \c \cdots \c \C_k$ such that $\C_i \subsetneq \La_2$ for every $i \in [k]$.
Each of these subclasses avoids at least one permutation of $\La_2$. In other words
for every $\C_i$ there is a~$\pi_i \in \La_2$ such that $\C_i \subseteq \La_2 \cap \Av(\pi_i)$. 
Considering a~sufficiently large $n$ so that 
$\pi_i \leq \delta_n \oplus \delta_n$ for every $i \in [k]$
we get that $\C_i \subseteq \La_2 \cap \Av(\delta_n \oplus \delta_n)$ for every $i$, in other words
every permutation in these subclasses has one of its two layers shorter than $n$.
It follows that for a~fixed integer $N$ there are at most $2(n-1)$ permutations of order $N$
in any $\C_i$, therefore there are at most $(2n-2)^k$ permutations in $\C_1 \c \cdots \c \C_k$.
But $\La_2$ contains $N$ permutations of order $N$ for any $N$, therefore we obtain
a contradiction by choosing $N > (2n-2)^k$.
\end{proof}

The number of permutations of order $n$ in $\La_2$ is linear in $n$ while any proper subclass contains
only constantly many permutations of fixed order. We can use the same approach using the asymptotic
jump from polynomial to exponential functions to show that a~different class of permutations cannot be composable.
Namely, let $\F_2$ be the class of layered permutations with layers of size 1~or 2.

\begin{thm}\label{f2}
The class $\F_2$ is not composable.
\end{thm}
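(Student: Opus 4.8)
The plan is to adapt the counting argument from Theorem \ref{l2}, but exploiting the jump from polynomial to exponential growth rather than from linear to constant. First I would recall that a layered permutation with all layers of size $1$ or $2$ and total order $n$ is determined by a composition of $n$ into parts equal to $1$ or $2$; the number of such compositions is the Fibonacci number $F_{n+1}$, so $\F_2$ contains exponentially many (roughly $\varphi^n$) permutations of each order $n$. The strategy is then to show that any proper subclass $\C_i \subsetneq \F_2$ contains only polynomially many permutations of each fixed order, so that a composition $\C_1 \c \cdots \c \C_k$ of $k$ proper subclasses has only polynomially many permutations of each order, contradicting the exponential lower bound once $n$ is large.

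The key step is the subclass bound. If $\C_i \subsetneq \F_2$, then $\C_i$ avoids some $\pi_i \in \F_2$; since $\pi_i$ is itself a layered permutation with layers of size $1$ or $2$, it is contained in $\delta_2^{\oplus m} = \delta_2 \oplus \delta_2 \oplus \cdots \oplus \delta_2$ ($m$ copies) for $m = |\pi_i|$. Hence $\C_i \subseteq \F_2 \cap \Av(\delta_2^{\oplus m})$, and taking $m$ to be the maximum of the $|\pi_i|$ we may assume the same $m$ works for all $i$. A permutation in $\F_2 \cap \Av(\delta_2^{\oplus m})$ cannot have $m$ or more layers of size $2$ (those would form a copy of $\delta_2^{\oplus m}$), so it has at most $m-1$ layers of size $2$ and the rest of size $1$. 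Such a permutation of order $n$ is specified by choosing the positions of the at most $m-1$ two-element layers among at most $n$ layers, giving at most $\binom{n}{\le m-1} \cdot m = O(n^{m-1})$ possibilities — polynomial in $n$ with degree depending only on $m$, not on $n$.

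Putting the pieces together: if $\C_1, \ldots, \C_k \subsetneq \F_2$ and $\F_2 \subseteq \C_1 \c \cdots \c \C_k$, then for each fixed order $N$ the class $\C_1 \c \cdots \c \C_k$ contains at most $\bigl(c N^{m-1}\bigr)^k = c^k N^{k(m-1)}$ permutations (since a permutation in the composition is determined by one permutation of order $N$ from each factor, and composition does not increase order). But $\F_2$ contains $F_{N+1} \ge c' \varphi^N$ permutations of order $N$, and $\varphi^N$ eventually exceeds any fixed polynomial in $N$; choosing $N$ large enough yields $F_{N+1} > c^k N^{k(m-1)}$, a contradiction. I expect the main (modest) obstacle to be stating the subclass counting bound cleanly — in particular confirming that a proper subclass really is forced to avoid some $\delta_2^{\oplus m}$ and hence to have a bounded number of size-$2$ layers — but this is essentially the same move as in the proof of Theorem \ref{l2}, merely with a different pattern family, so no new ideas beyond the polynomial-versus-exponential gap are needed.
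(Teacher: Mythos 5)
Your proposal is correct and follows essentially the same route as the paper's proof: both force each proper subclass into $\F_2 \cap \Av(\delta_2 \oplus \cdots \oplus \delta_2)$, bound the number of size-2 layers to get a polynomial count per order, and contrast this with the exponential (Fibonacci) growth of $\F_2$ to reach a contradiction. Your per-subclass bound $O(N^{m-1})$ is slightly sharper than the paper's $nN^n$, but the argument is the same.
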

\begin{proof}
Suppose $\F_2$ is composable from $k$ of its proper subclasses $\C_1, \C_2, \ldots, \C_k$.
We choose a~permutation from $\F_2 \setminus C_i$ for every $i$ and we select $n$ large enough so that
every chosen permutation is contained in $\pi = \sum_{i=1}^n\delta_2$. Then if $\C = \F_2 \cap \Av(\pi)$,
we get that $\F_2 \subseteq (\C)^k$. Every permutation in $\C$ contains fewer than $n$ layers of size 2, otherwise
it would contain $\pi$. Clearly there are at most $N^a$ permutations of $\F_2$ that have order $N$
and exactly $a$ layers of size 2. Therefore $\C$ contains at most $N^1 + N^2 + \cdots + N^{n-1} \leq nN^n$
permutations of order $N$ and the composition $(\C)^k$ then contains at most $n^kN^{nk}$ permutations of 
order $N$, which is a~number polynomial in $N$. As mentioned in \cite[Chapter 4]{Vatter15}, 
the number of permutations of order $N$ of $\F_2$
is counted by the Fibonacci numbers  which grow exponentially, therefore there is $N$ large enough so that 
$\F_2$ has more permutations of order $N$ than $(\C)^k$. 

Note that this result also follows immediately from the theorem of Kaiser and Klazar (\cite[3.4]{KaiserKlazar03}),
which states that if the number of permutations of order $n$ in a~permutation class is less than the $n$-th
Fibonacci number for at least one value of $n$, then it is eventually polynomial in $n$. This implies
that every class counted by the Fibonacci numbers is uncomposable.
\end{proof}

The argument used in the proofs above cannot be used for $\La_3$,
so we need a~different approach to show that this class too is not composable. We will make use of the following 
property of $\La_2 \cup \La_2^r$.

\begin{lemma} \label{l2group}
$(\La_2 \cup \La_2^r) \cap \S_n$ is a~subgroup of $\S_n$ for every $n$, i.e. it is closed under composition.
\end{lemma}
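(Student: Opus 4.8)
The plan is to write down the set $(\La_2 \cup \La_2^r) \cap \S_n$ completely explicitly: it turns out to be a copy of the dihedral group of order $2n$, with the permutations of $\La_2$ playing the role of the reflections and those of $\La_2^r$ the role of the rotations. Once the elements are presented via arithmetic modulo $n$, composition of functions becomes addition modulo $n$, and closure is a short check.

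First I would enumerate the two classes. A permutation of order $n$ that is a sum of at most two decreasing permutations is either $\delta_n$ or of the form $\delta_a \oplus \delta_{n-a}$ with $1 \le a \le n-1$; inspecting the one-line notation, each of these equals the permutation $\rho_k \in \S_n$ defined by letting $\rho_k(i)$ be the element of $[n]$ congruent to $k-i$ modulo $n$ (indeed $\delta_n = \rho_1$ and $\delta_a \oplus \delta_{n-a} = \rho_{a+1}$), so $\La_2 \cap \S_n = \{\rho_k : k \in \mathbb{Z}/n\mathbb{Z}\}$, a set of $n$ permutations. Since reversal is an order-isomorphism, $\La_2^r \cap \S_n = \{\rho_k^r : k \in \mathbb{Z}/n\mathbb{Z}\}$, and the identity $\rho_k^r(i) = \rho_k(n+1-i)$ gives $\rho_k^r = \lambda_{k-1}$, where $\lambda_j \in \S_n$ is defined by letting $\lambda_j(i)$ be the element of $[n]$ congruent to $j+i$ modulo $n$; in particular $\lambda_0 = \iota_n$. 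Hence $(\La_2 \cup \La_2^r) \cap \S_n = \{\rho_k\}_k \cup \{\lambda_j\}_j$.

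The heart of the matter is four congruence computations. Since $\rho_k(i) \equiv k-i$ and $\lambda_j(i) \equiv j+i$ modulo $n$, composing the functions merely adds the parameters: $\rho_k \c \rho_{k'} = \lambda_{k-k'}$, $\lambda_j \c \lambda_{j'} = \lambda_{j+j'}$, $\rho_k \c \lambda_j = \rho_{k-j}$ and $\lambda_j \c \rho_k = \rho_{j+k}$. Every product lies again in $\{\rho_k\}_k \cup \{\lambda_j\}_j$, so the set is closed under composition; moreover it contains the identity $\lambda_0 = \iota_n$ and, by the same identities, the inverse of every element ($\rho_k^{-1} = \rho_k$ and $\lambda_j^{-1} = \lambda_{n-j}$), so it is a subgroup of $\S_n$. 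These relations also reveal that $\{\rho_k\}_k$ and $\{\lambda_j\}_j$ are exactly the reflections and the rotations of the dihedral group of order $2n$, which is the structural reason the union is closed.

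I do not anticipate a genuine obstacle. The two points to watch are: first, putting the one-layer permutations $\delta_n$ and $\iota_n$ on the same footing as the two-layer ones, which the description ``the element of $[n]$ congruent to $k-i$ modulo $n$'' does automatically; and second, noticing that it is $\La_2^r$, not $\La_2$, that supplies the rotations $\lambda_j$ needed for closure. For $n \le 2$ the sets $\{\rho_k\}_k$ and $\{\lambda_j\}_j$ coincide (and fill all of $\S_n$ when $n=2$), but the computations above go through unchanged, so small $n$ needs no separate treatment.
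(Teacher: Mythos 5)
Your proof is correct and rests on the same idea as the paper's: describing the elements of $(\La_2 \cup \La_2^r)\cap \S_n$ by arithmetic modulo $n$, with $\La_2^r$ giving the cyclic shifts and the union forming the dihedral-type structure closed under composition. The only cosmetic difference is that the paper parametrizes just the shifts of $\La_2^r$ explicitly and settles the mixed cases by composing with $\delta_n$ (via $\La_2 \c \D = \La_2^r = \D \c \La_2$), whereas you write explicit mod-$n$ formulas for both families ($\rho_k$ and $\lambda_j$) and check all four products directly.
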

\begin{proof}
In this proof, we consider an additive group structure on the set $[n]$
with the neutral element $n$ and an operator $+_n$ defined as 
\begin{equation*} a +_n b = 1 + (a + b - 1)\mod n. \end{equation*}
First we prove that $\La_2^r \cap \S_n$ by itself is a~subgroup of $\S_n$.
Observe that $\La_2^r \cap \S_n$ contains exactly permutations $\pi$ such that there
is a~shifting number $k$ with $\pi(i) = i+_nk$ for every $i \in [n]$.
Indeed, if $\pi = \iota_a \ominus \iota_b$ then for any $i \in [n]$ we have 
$\pi(i) = i+_nb$ and conversely if $\pi(i) = i+_nk$
for every $i\in [n]$ then $\pi = \iota_{n-k} \ominus \iota_k$. Now for two permutations $\pi, \sigma \in \La^r_2$
with shifting numbers $k,l$ respectively we have $\pi(\sigma(i)) =
i +_n l +_n k$ for any $i \in [n]$, therefore $\pi \c \sigma \in \La_2^r$ since it has
a shifting number $k+_nl$.

It trivially holds that $\La_2 \c \D =  \La_2^r = \La_2^c = \D \c \La_2$. Considering
$\pi, \sigma \in (\La_2 \cup \La_2^r) \cap \S_n$ it remains to distinguish the following four cases:
\begin{enumerate}[(i)]
\item $\pi \in \La_2^r$ and $\sigma \in \La_2^r$, then $\pi \c \sigma \in \La_2^r$ by the discussion above,
\item $\pi \in \La_2^r$ and $\sigma \in \La_2$, then $\pi \c \sigma = (\pi \c \sigma^r) \c \delta_n \in \La_2$,
\item $\pi \in \La_2$ and $\sigma \in \La_2^r$, then $\pi \c \sigma = (\delta_n \c \pi^c) \c \sigma =
\delta_n \c (\pi^r \c \sigma) \in \La_2$,
\item $\pi \in \La_2$ and $\sigma \in \La_2$, then $\pi \c \sigma = \pi^r \c (\delta_n \c \delta_n)  \c \sigma^c =
\pi^r \c \sigma^r \in \La^r_2$.
\qedhere
\end{enumerate}
\end{proof}

\begin{thm}\label{l3}
The class $\La_3$ is not composable.
\end{thm}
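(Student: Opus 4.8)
The plan is to argue by contradiction and, as in the proofs of Theorems~\ref{l2} and~\ref{f2}, to begin by replacing the hypothetical proper subclasses with ones avoiding a single permutation with three long layers. The essential difference from those proofs is that a purely quantitative argument is no longer available: $\La_3$ has $\Theta(N^2)$ permutations of order $N$, whereas a composition of $k$ of its proper subclasses could a~priori have up to $\Theta(N^{2k})$ of them. So after the reduction the group structure provided by Lemma~\ref{l2group} has to be brought in.

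Concretely, suppose $\La_3 \subseteq \C_1 \c \C_2 \c \cdots \c \C_k$ with $\C_i \subsetneq \La_3$ for every $i$. For each $i$ choose $\pi_i \in \La_3 \setminus \C_i$, and pick $n$ so large that $\pi_i \le \delta_n \oplus \delta_n \oplus \delta_n$ for all $i$; then $\C_i \subseteq \La_3 \cap \Av(\delta_n \oplus \delta_n \oplus \delta_n)$, so every permutation occurring in any $\C_i$ has the form $\delta_a \oplus \delta_b \oplus \delta_c$ in which at least one of $a$, $b$, $c$ is smaller than $n$ — it has a~\emph{short} layer.

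The heart of the proof is then the claim that composing permutations each having a~short layer can never produce a~permutation of $\La_3$ all of whose layers are long; quantitatively, that there is an $m = m(n,k)$ such that every permutation of $\La_3$ lying in the $k$-fold composition of $\La_3 \cap \Av(\delta_n \oplus \delta_n \oplus \delta_n)$ with itself avoids $\delta_m \oplus \delta_m \oplus \delta_m$. To get this I would analyse a~composition $\sigma_1 \c \cdots \c \sigma_k$ of such factors by isolating in each $\sigma_i$ the contribution of its two long layers — which, after the short layer is absorbed into an adjacent one, carries a~permutation of $\La_2 \cup \La_2^r$, hence a~member of the group $(\La_2 \cup \La_2^r)\cap\S_N$ of Lemma~\ref{l2group} — from the bounded effect of the short layer. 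Since these group parts compose to a~single element of $\La_2 \cup \La_2^r$, and since — as follows from the description of $\La_2^r \cap \S_N$ as the cyclic shifts given in the proof of Lemma~\ref{l2group} — no permutation of order at least $4$ in $(\La_2 \cup \La_2^r)\cap\La_3$ has three nonzero layers, one should be able to conclude that any member of the composition which happens to be layered still has a~layer of length bounded in terms of $n$ and $k$. As $\delta_m \oplus \delta_m \oplus \delta_m$ lies in $\La_3$ but has all three layers of length $m$, taking $m > m(n,k)$ and $N = 3m$ then contradicts $\La_3 \subseteq \C_1 \c \cdots \c \C_k$.

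The main obstacle is precisely this last claim, and it is genuinely delicate. A~permutation of $\La_3$ with two long layers and one short layer need not be close, in any naive sense, to an element of $\La_2 \cup \La_2^r$ — for instance when the short layer sits in the middle — and the partial products $\sigma_1 \c \cdots \c \sigma_j$ are in general not layered, so one cannot simply iterate a~statement about $\La_3$. One therefore has to follow all $k$ factors simultaneously, tracking how the bounded effects of the short layers interact with the group part, and invoke the requirement that the total product be in $\La_3$ only at the very end. Distinguishing the three positions the short layer may occupy (first, middle, last), which give the group part and the correction genuinely different shapes, is where the bookkeeping is heaviest; here Corollary~\ref{cor_inversion} and Lemma~\ref{lemma_basicsym}(b), which let one pass to inverses or compose with $\delta_n$ on one side, help to normalise several of these configurations.
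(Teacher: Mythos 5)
Your overall strategy matches the paper's: the same reduction to subclasses of $\La_3 \cap \Av(\delta_n \oplus \delta_n \oplus \delta_n)$ (so every factor has a short layer), the same appeal to the group structure of $\La_2 \cup \La_2^r$ from Lemma~\ref{l2group}, and the same final contradiction with a permutation of $\La_3$ whose three layers are all long. But the heart of the argument --- your claim that a $k$-fold composition of short-layer factors cannot contain $\delta_m \oplus \delta_m \oplus \delta_m$ for large $m$ --- is only asserted, with a plan (``absorb the short layer into an adjacent long one, track the bounded correction to a group element, one should be able to conclude\dots'') that you yourself identify as the main unresolved obstacle. This is a genuine gap: the difficulties you list (a middle short layer makes the factor far, in value terms, from anything in $\La_2 \cup \La_2^r$; partial products are not layered, so no statement about $\La_3$ can be iterated) are exactly what defeats the closeness-based route unless one invests the kind of machinery used later for Theorem~\ref{l}, and you do not carry that out.

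The paper's proof fills precisely this hole with a different, simpler device that your proposal is missing: it works with \emph{deletion of elements} rather than approximation of values. Each factor avoids $\delta_n\oplus\delta_n\oplus\delta_n$, so deleting its short layer (at most $n-1$ elements) leaves a subsequence order-isomorphic to a two-layered permutation --- regardless of whether the short layer is first, middle or last, so no case distinction or normalisation via Corollary~\ref{cor_inversion} is needed. One then proves by induction on $k$ that from $\pi_1 \c \cdots \c \pi_k$ one can delete at most $(n-1)k$ elements to obtain a two-layered or two-co-layered pattern: in the step, restrict $\sigma \c \pi_k$ (where $\sigma = \pi_1 \c \cdots \c \pi_{k-1}$) to the index set $S = \{\pi_k^{-1}(i_1),\ldots,\pi_k^{-1}(i_a)\}\cap\{j_1,\ldots,j_b\}$ coming from the two surviving index sets; the restrictions of $\pi_k$ and of $\sigma$ are then both two-layered or two-co-layered patterns, and Lemma~\ref{l2group} says their composition is again one. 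Note that the inductive object is a pattern \emph{contained in} the partial product, never the partial product itself, which is why the non-layeredness of partial products is harmless. The conclusion is that every permutation of order $N$ in $\C_1\c\cdots\c\C_k$ contains a two-layered or two-co-layered pattern on at least $N-k(n-1)$ points, and $\bigoplus_{i=1}^{3}\delta_{k(n-1)+1}$ fails this, giving the contradiction. Without this deletion/restriction formulation (or a completed version of your closeness argument), your proof is incomplete at its central step.
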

\begin{proof}
Suppose that $\La_3 \subseteq \C_1 \c \C_2 \c \cdots \c \C_k$ such that $\C_i \subsetneq \La_3$ for any $i$.
Using the same initial argumentation as in the proof of Theorem \ref{l2} we get
that there is an $n$ such that $\La_3 \subseteq (\La_3 \cap \Av(\delta_n \oplus \delta_n \oplus \delta_n))^k$,
meaning that every permutation of $\La_3$ can be composed from $k$ permutations having at least one
of the three layers shorter than $n$.

Let $\pi_i \in \C_i$ for $1 \leq i \leq k$ and $\pi = \pi_1 \c \pi_2 \c \cdots \c \pi_k$.
We now claim that it is possible to remove at most $(n-1)k$ elements from $\pi$
to obtain a~two-layered or a~two-co-layered permutation. We will prove this by induction on $k$.
The case $k=1$ is easy since $\pi = \pi_1$ avoids $\delta_n \oplus \delta_n \oplus \delta_n$,
so it has a~layer of length shorter than $n$ whose removal creates a~two-layered pattern.

For $k > 1$ let $\sigma = \pi_1 \c \cdots \c \pi_{k-1}$ and $\pi = \sigma \c \pi_k$.
Let all these permutations have the order $N$.
By the induction hypothesis, there are $a$ indices $i_1, \ldots i_a$ such
that $a \geq N - (n-1)(k-1)$ and $\sigma$ restricted to these indices has the two-layer 
or the two-co-layer pattern. Also there are $b$ indices $j_{1}, \ldots, j_{b}$
such that $b \geq N - (n-1)$ and $\pi_k$ restricted to these indices forms the two-layer
or the two-co-layer pattern.

Let us now restrict the function $\sigma \c \pi_k$ to the set
$S = \{\pi_k^{-1}(i_1), \ldots, \pi_k^{-1}(i_a)\} \cap \{j_{1}, \ldots, j_{b}\}$ whose size is at least $N - (n-1)k$.
Then both $\pi_k(S)$ and $\sigma(\pi_k(S))$ are still two-layer or two-co-layer patterns,
which implies the same for their composition according to Lemma \ref{l2group}.
Therefore $\pi$ restricted to $S$ forms a~two-layer or two-co-layer pattern
and $N - |S| \leq (n-1)k$ which completes the induction step.

Consequently, any permutation of order $N$ in $\C_1 \c \C_2 \c \cdots \c \C_k$ 
contains a~two-layered or a~two-co-layered pattern of size at least  $N - k(n-1)$.
But choosing $N = 3(k(n-1)+1)$ and considering the permutation $\bigoplus_{i=1}^{3}\delta_{k(n-1)+1} \in \La_3$
we obtain a~contradiction.
\end{proof}

If we allow more than three but still constantly many layers, we always get a~composable class.

\begin{thm} \label{l4}
The class $\La_k$ is $3$-composable for every $k \geq 4$.
\end{thm}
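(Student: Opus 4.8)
The plan is to show that for $k \geq 4$ the class $\La_k$ decomposes as a composition of three proper subclasses, using the vertical/horizontal merge machinery from Section~\ref{ch3} together with the structural description of $\La_k\c\V_m$ and $\La_k\c\Ho_m$ provided by Lemma~\ref{lemma_behaviour}. The guiding intuition is that a permutation with $k$ layers can be obtained by starting from something with about $k/2$ layers, cutting it into pieces, and rearranging those pieces; so the goal is to realise $\La_k$ as $\A\c\B\c\C$ where each factor only ``knows about'' fewer than $k$ layers at a time.

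\medskip

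\textbf{Step 1: reduce to a merge statement.} First I would observe, analogously to the identity $\I_k=\I\odot\cdots\odot\I$, that a layered permutation with $k$ layers is nothing but a horizontal merge (not an unrestricted merge!) of its layers, each layer being a decreasing permutation; so $\La_k=\Ho(\D,\D,\ldots,\D)$ with $k$ copies of $\D$. More usefully, I would split the layers into two groups: $\La_k\subseteq\Ho(\La_{\lceil k/2\rceil},\La_{\lfloor k/2\rfloor})$, simply by drawing one horizontal line between two consecutive layers. Since for $k\geq 4$ we have $\lceil k/2\rceil\leq k-1$ and $\lfloor k/2\rfloor\leq k-1$, both factors are proper subclasses of $\La_k$. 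So the task becomes: express the ``horizontal merge of two things'' as a composition of three classes, each proper in $\La_k$.

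\medskip

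\textbf{Step 2: realise a horizontal two-block merge via composition.} The block structure of a horizontal merge $\Ho(\A,\B)$ is: the plot is split by a horizontal line, the bottom block is (order-isomorphic to) a permutation of $\A$ occupying the low values and arbitrary positions, the top block likewise for $\B$. Equivalently, $\Ho(\A,\B)=\left(\V(\A,\B)\right)$ acted on by a value-permutation that is itself increasing-within-blocks-but-block-order-preserving\,---\,but in fact the cleanest route is $\Ho(\A,\B)=(\A\ominus\B)\c(\text{something in }\I_2)$ at the level of individual permutations, then upgrade to classes. Concretely: if $\pi\in\Ho(\A,\B)$ with bottom block order-isomorphic to $\alpha\in\A$ at positions forming a set $P$ and top block $\beta\in\B$ at positions $Q=[N]\setminus P$, then placing $\alpha$ on an initial segment of positions and $\beta$ after it gives $\alpha\ominus\beta\in\La_k$ (it has at most $k$ layers since a skew sum of two layered things with $\leq k-1+1$... careful here: $\alpha\ominus\beta$ where $\alpha$ has $\leq\lceil k/2\rceil$ layers and $\beta$ has $\leq\lfloor k/2\rfloor$ layers is itself layered with $\leq k$ layers, hence lies in $\La_k$), and $\pi$ is obtained from $\alpha\ominus\beta$ by dividing it into two contiguous subsequences and interleaving them, i.e.\ $\pi\in(\alpha\ominus\beta)\c\Ho_2$ by Lemma~\ref{lemma_behaviour}(a). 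Thus $\La_k\subseteq\Ho(\La_{\lceil k/2\rceil},\La_{\lfloor k/2\rfloor})\subseteq(\La_{\lceil k/2\rceil}\cup\La_{\lfloor k/2\rfloor})^{\ominus}\c\Ho_2$ where by $\mathcal D^{\ominus}$ I informally mean the class of skew-sums; but I must be careful that the class of skew sums $\alpha\ominus\beta$ with $\alpha\in\La_a,\beta\in\La_b$ is contained in $\La_{a+b}=\La_k$, which is true, and more importantly that it is a \emph{proper} subclass, which holds because, e.g., $\delta_{k-1}\oplus 1\oplus\cdots$ (a genuinely ``$\oplus$-indecomposable into two skew halves'' layered permutation) is excluded\,---\,this is the point that needs the most care.

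\medskip

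\textbf{Step 3: absorb $\Ho_2$ and split the middle.} Now $\Ho_2\subsetneq\La_k$ trivially (it is contained in $\I_2\subsetneq\La_k$ for $k\geq 2$). So we already have $\La_k\subseteq\mathcal S\c\Ho_2$ with two proper factors; to get \emph{three} proper factors and in particular to make the first factor $\mathcal S$ proper, I would not use skew sums directly but instead compose once more: write the skew-sum class as $\La_{k-1}\c(\text{a shift class})$ or, more in the spirit of Theorem~\ref{thm_k+l-1}, express $\La_k\subseteq\La_{k-1}\c\I_2$ or $\La_k\subseteq\V(\La_{a},\La_{b})\c\Ho_2$ via Corollary~\ref{cor_splitcompose} applied to a splitting $\La_k\subseteq\La_a\odot\La_b$. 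Indeed $\La_k\subseteq\La_{k-1}\odot\D$ (peel off one layer), giving $\La_k\subseteq\V(\La_{k-1},\D)\c\Ho_2$ by Corollary~\ref{cor_splitcompose}; here $\V(\La_{k-1},\D)$ consists of a $\La_{k-1}$-permutation on an initial block of positions followed by a decreasing permutation on the rest, which is layered with at most $k$ layers but is a proper subclass of $\La_k$ (it cannot contain, say, $\delta_2$ on the last two positions preceded by a long decreasing run that would need to merge\,---\,again this exclusion point is where to be careful), and $\Ho_2\subsetneq\La_k$. That is only two factors again though; to genuinely need three rather than two, I expect the argument actually produces $\La_k\subseteq\La_{k-1}\c\I_2\c\Ho_2$ or similar, where the necessity of the third factor is forced because Theorem~\ref{l3} already shows two factors do not suffice for $\La_3$ and a counting/structural obstruction shows two do not suffice here either. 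So the final shape I would aim for is
\begin{equation*}
\La_k \;\subseteq\; \V(\La_{k-1},\D) \;\c\; \I_2 \;\c\; \Ho_2,
\end{equation*}
with all three factors proper subclasses of $\La_k$ for $k\geq 4$, the middle $\I_2$ factor being what lets the pieces produced by the first factor be shuffled in stages.

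\medskip

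\textbf{The main obstacle} will be verifying properness of the first factor, i.e.\ exhibiting a concrete layered permutation with at most $k$ layers that provably lies outside whichever ``vertical-merge-of-smaller-layered-classes'' I choose as the head of the composition, while simultaneously keeping enough room so that the three factors really do cover all of $\La_k$; balancing ``large enough to cover'' against ``small enough to be proper'' is delicate and is exactly why $k\geq 4$ (and not $k=3$, which fails by Theorem~\ref{l3}) is the threshold. I would handle this by choosing the split of the $k$ layers as evenly as possible ($\lceil k/2\rceil$ and $\lfloor k/2\rfloor$, both $\leq k-1$ once $k\geq 3$, and for the three-factor version arranging the bookkeeping so each factor sees at most $k-1$ layers), and then writing down an explicit ``hard'' permutation such as $\bigoplus_{i=1}^{k}\delta_{2}$ (or a suitable truncation) and checking layer counts against each factor's definition.
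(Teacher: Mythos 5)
Your plan has a structural flaw that cannot be repaired within the framework you chose: every composition you propose uses factors that are not even subclasses of $\La_k$, let alone proper ones. Since the increasing permutation of length $k+1$ has $k+1$ layers, $\I \nsubseteq \La_k$; consequently $\I_2$ and $\Ho_2$ are not subclasses of $\La_k$ (so the claim ``$\Ho_2\subsetneq\La_k$ trivially'' is false, and in any case $\I_2\nsubseteq\La$ because $231\in\I_2$ is not layered), and $\V(\La_{k-1},\D)\nsubseteq\La_k$ either, because a vertical merge puts no constraint on values (again $231\in\V(\La_{k-1},\D)$). Hence the target inclusion $\La_k\subseteq\V(\La_{k-1},\D)\c\I_2\c\Ho_2$, even if true, would not witness $3$-composability. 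Step 2 has an independent error: a skew sum of layered permutations is in general not layered ($12\ominus12=3412\notin\La$), so the parenthetical claim that $\alpha\ominus\beta\in\La_k$ is wrong; and if you repair the orientation and use $\alpha\oplus\beta$ instead, the head class becomes (the downward closure of) all such sums, which is $\La_k$ itself --- not proper. Indeed, because the layers of a layered permutation are contiguous in both position and value, the $\Ho_2$ ``shuffle'' factor acts vacuously on $\La_k$, so the Section~\ref{ch3} machinery (built for $\I_k$, whose merges are unrestricted) gives no traction here; the ``main obstacle'' you flag is not a delicate balance but an impossibility for these factors.

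The paper's proof is a short explicit computation of a different nature, exploiting that layered permutations are involutions (so $\sigma\c\sigma\c\sigma=\sigma$ for layered $\sigma$). If $\pi\in\La_k$ has fewer than $k$ layers then $\pi=\pi\c\delta_n\c\delta_n$; otherwise $k\geq4$ lets one write $\pi=\delta_a\oplus\delta_b\oplus\delta_c\oplus\delta_d\oplus\pi'$ and check directly that
\begin{equation*}
\pi=(\delta_{a+b}\oplus\delta_c\oplus\delta_d\oplus\pi')\c(\delta_{a+b}\oplus\delta_{c+d}\oplus\pi')\c(\delta_a\oplus\delta_b\oplus\delta_{c+d}\oplus\pi'),
\end{equation*}
where each factor is obtained from $\pi$ by merging adjacent layers and hence lies in $\La_{k-1}$, $\La_{k-2}$, $\La_{k-1}$ respectively; these are proper subclasses, giving $\La_k\subseteq\La_{k-1}\c\La_{k-2}\c\La_{k-1}$. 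Any successful argument must, like this one, use factors consisting of layered permutations with fewer layers; factors containing $\I$ (such as $\Ho_2$, $\I_2$, or vertical/horizontal merges of $\I$'s) are ruled out from the start.
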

\begin{proof}
We will show that $\La_k \subseteq \La_{k-1} \c \La_{k-2} \c \La_{k-1}$.

If $\pi \in \La_k$ of order $n$ has fewer than $k$ layers, then $\pi = \pi \c \delta_n \c \delta_n \in \La_{k-1} \c \La_{k-2} \c \La_{k-1}$.
Otherwise $\pi$ has at least 4~layers and has the form $\pi = \delta_a \oplus \delta_b \oplus \delta_c \oplus \delta_d \oplus \pi'$
for some positive $a$, $b$, $c$, $d$. Since for every layered $\sigma$ we have $\sigma \c \sigma \c \sigma = \sigma$
it is not hard to check that
\begin{equation*}\pi = (\delta_{a+b} \oplus \delta_c \oplus \delta_d \oplus \pi') \c
        (\delta_{a+b} \oplus \delta_{c+d} \oplus \pi') \c
        (\delta_a \oplus \delta_b \oplus \delta_{c+d} \oplus \pi') \in \La_{k-1} \c \La_{k-2} \c \La_{k-1}.
        \end{equation*}
The situation is represented in Figure \ref{figure_l4}.
\end{proof}

\begin{figure}[h]
\begin{tikzpicture}[line cap=round,line join=round,>=triangle 45,x=0.74cm,y=0.74cm]
\clip(2,1.8) rectangle (20.6,6.18);
\draw (2,2)-- (6,2);
\draw (6,2)-- (6,6);
\draw (6,6)-- (2,6);
\draw (2,6)-- (2,2);
\draw [dash pattern=on 1pt off 1pt] (2,2.61)-- (6,2.61);
\draw [dash pattern=on 1pt off 1pt] (2.61,2)-- (2.61,6);
\draw [dash pattern=on 1pt off 1pt] (3.62,6)-- (3.62,2);
\draw [dash pattern=on 1pt off 1pt] (4.86,2)-- (4.86,6);
\draw [dash pattern=on 1pt off 1pt] (2,4.86)-- (6,4.86);
\draw [dash pattern=on 1pt off 1pt] (6,3.62)-- (2,3.62);
\draw (6.86,2.01)-- (10.86,2.01);
\draw (10.86,2.01)-- (10.86,6.01);
\draw (10.86,6.01)-- (6.86,6.01);
\draw (6.86,6.01)-- (6.86,2.01);
\draw [dash pattern=on 1pt off 1pt] (6.86,2.63)-- (10.86,2.63);
\draw [dash pattern=on 1pt off 1pt] (7.47,2.01)-- (7.47,6.01);
\draw [dash pattern=on 1pt off 1pt] (8.47,6.01)-- (8.47,2.01);
\draw [dash pattern=on 1pt off 1pt] (9.71,2.01)-- (9.71,6.01);
\draw [dash pattern=on 1pt off 1pt] (6.86,4.87)-- (10.86,4.87);
\draw [dash pattern=on 1pt off 1pt] (10.86,3.63)-- (6.86,3.63);
\draw (16.43,2.01)-- (20.43,2.01);
\draw (20.43,2.01)-- (20.43,6.01);
\draw (20.43,6.01)-- (16.43,6.01);
\draw (16.43,6.01)-- (16.43,2.01);
\draw [dash pattern=on 1pt off 1pt] (16.43,2.63)-- (20.43,2.63);
\draw [dash pattern=on 1pt off 1pt] (17.05,2.01)-- (17.05,6.01);
\draw [dash pattern=on 1pt off 1pt] (18.05,6.01)-- (18.05,2.01);
\draw [dash pattern=on 1pt off 1pt] (19.29,2.01)-- (19.29,6.01);
\draw [dash pattern=on 1pt off 1pt] (16.43,4.87)-- (20.43,4.87);
\draw [dash pattern=on 1pt off 1pt] (20.43,3.63)-- (16.43,3.63);
\draw (11.63,1.99)-- (15.63,1.99);
\draw (15.63,1.99)-- (15.63,5.99);
\draw (15.63,5.99)-- (11.63,5.99);
\draw (11.63,5.99)-- (11.63,1.99);
\draw [dash pattern=on 1pt off 1pt] (11.63,2.61)-- (15.63,2.61);
\draw [dash pattern=on 1pt off 1pt] (12.24,1.99)-- (12.24,5.99);
\draw [dash pattern=on 1pt off 1pt] (13.24,5.99)-- (13.24,1.99);
\draw [dash pattern=on 1pt off 1pt] (14.48,1.99)-- (14.48,5.99);
\draw [dash pattern=on 1pt off 1pt] (11.63,4.85)-- (15.63,4.85);
\draw [dash pattern=on 1pt off 1pt] (15.63,3.61)-- (11.63,3.61);
\draw (6,4.2) node[anchor=north west] {$ = $};
\draw (10.9,4.2) node[anchor=north west] {$\circ$};
\draw (15.7,4.2) node[anchor=north west] {$\circ$};
\draw (2,2.61)-- (2.61,2);
\draw (2.61,3.62)-- (3.62,2.61);
\draw (3.62,4.86)-- (4.86,3.62);
\draw (4.86,6)-- (6,4.86);
\draw (6.86,3.63)-- (8.47,2.01);
\draw (8.47,4.87)-- (9.71,3.63);
\draw (9.71,6.01)-- (10.86,4.87);
\draw (11.63,3.61)-- (13.24,1.99);
\draw (13.24,5.99)-- (15.63,3.61);
\draw (16.43,2.63)-- (17.05,2.01);
\draw (17.05,3.63)-- (18.05,2.63);
\draw (18.05,6.01)-- (20.43,3.63);
\end{tikzpicture}
\caption{$\delta_a \oplus \delta_b \oplus \delta_c \oplus \delta_d=(\delta_{a+b} \oplus \delta_c \oplus \delta_d) \c
        (\delta_{a+b} \oplus \delta_{c+d}) \c
        (\delta_a \oplus \delta_b \oplus \delta_{c+d})
$}
\label{figure_l4}
\end{figure}
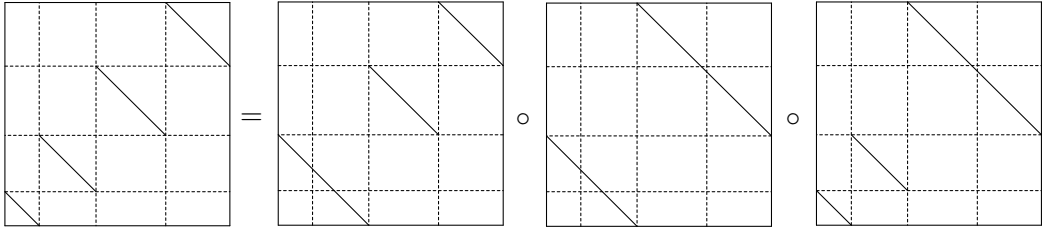

This theorem raises the question whether $\La_k$ could be 2-composable for $k \geq 4$. Our work
from Section \ref{ch2} quickly determines that this is not the case.
\begin{prop}
$\La_k$ for $k\geq 4$ is not 2-composable. In particular, it is not $n$-composable for
any even number $n$.
\end{prop}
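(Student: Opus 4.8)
The plan is to obtain this as an immediate corollary of Theorem~\ref{evencomposable}: I just need to check that $\La_k$ satisfies its two hypotheses, namely that $\La_k$ is an infinite permutation class and that $\I \nsubseteq \La_k$. That $\La_k$ is a permutation class is clear, since a subpermutation of a sum of at most $k$ decreasing permutations is again a sum of at most $k$ decreasing permutations (each layer can only shrink or vanish when passing to a subpermutation). It is infinite because $\delta_n \in \La_1 \subseteq \La_k$ for every positive integer $n$.

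The one genuinely substantive point is verifying $\I \nsubseteq \La_k$, which I would do by exhibiting an increasing permutation lying outside $\La_k$; the natural candidate is $\iota_{k+1}$. Suppose $\iota_{k+1} = \delta_{a_1} \oplus \delta_{a_2} \oplus \cdots \oplus \delta_{a_m}$ were a sum of $m \leq k$ layers. Then $a_1 + a_2 + \cdots + a_m = k+1 > m$, so some $a_j \geq 2$, which forces $\iota_{k+1}$ to contain the pattern $21$, a contradiction. Hence $\iota_{k+1} \notin \La_k$ and therefore $\I \nsubseteq \La_k$.

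With both hypotheses established, Theorem~\ref{evencomposable} applied to $\C = \La_k$ yields that $\La_k$ is not $2j$-composable for any positive integer $j$; taking $j = 1$ gives in particular that $\La_k$ is not $2$-composable, and the general statement about even $n$ is exactly the assertion for all $j$.

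I do not expect any real obstacle in this argument — all the difficulty is already packed into Theorem~\ref{evencomposable} and Lemma~\ref{lemma_extrakl} behind it. The only thing to watch is a clash of notation: the parameter $k$ counting layers of $\La_k$ should be kept distinct from the parameter indexing $2k$-composability in Theorem~\ref{evencomposable}, which is why I phrase the conclusion using a fresh index $j$.
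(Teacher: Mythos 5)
Your argument is correct and matches the paper's proof, which likewise deduces the proposition directly from Theorem~\ref{evencomposable} after noting that $\La_k$ is infinite and does not contain $\I$; you simply spell out the (easy) verifications of these two hypotheses, including the observation that $\iota_{k+1}\notin\La_k$.
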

\begin{proof}
Since $\La_k$ is an infinite class which does not contain $\I$ the statement  directly follows from Theorem \ref{evencomposable}.
\end{proof}

We have now covered the classes $\La_k$ for all $k \geq 2$.
It remains to consider the class~$\La$, which we show to be uncomposable. Before we proceed with the 
proof, we introduce an additional useful concept.
We call a~subsequence $s$ of a~permutation $\pi$ a~\emph{block} if $s$ is either an increasing
or a~decreasing contiguous subsequence of consecutive integers.
We then call $\pi$ a~$k$\emph{-block} if it is a~concatenation of at most $k$ blocks (see Figure \ref{figure_blocks}).

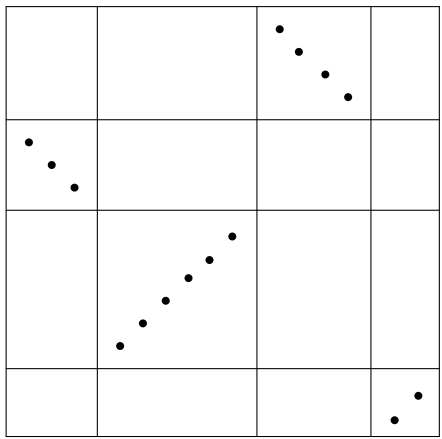
\begin{figure}[ht]
\centering
\begin{tikzpicture}[line cap=round,line join=round,>=triangle 45,x=0.6cm,y=0.6cm]
\clip(0.25,-6.39) rectangle (10.17,3.7);
\draw (0.5,3.5)-- (0.5,-6);
\draw (0.5,-6)-- (10,-6);
\draw (10,-6)-- (10,3.5);
\draw (10,3.5)-- (0.5,3.5);
\draw (0.5,1)-- (10,1);
\draw (0.5,-1)-- (10,-1);
\draw (0.5,-4.5)-- (10,-4.5);
\draw (2.5,3.5)-- (2.5,-6);
\draw (6,-6)-- (6,3.5);
\draw (8.5,3.5)-- (8.5,-6);
\begin{scriptsize}
\fill [color=black] (1,0.5) circle (1.5pt);
\fill [color=black] (1.5,0) circle (1.5pt);
\fill [color=black] (2,-0.5) circle (1.5pt);
\fill [color=black] (3,-4) circle (1.5pt);
\fill [color=black] (3.5,-3.5) circle (1.5pt);
\fill [color=black] (4,-3) circle (1.5pt);
\fill [color=black] (4.5,-2.5) circle (1.5pt);
\fill [color=black] (4.96,-2.1) circle (1.5pt);
\fill [color=black] (5.46,-1.58) circle (1.5pt);
\fill [color=black] (6.5,3) circle (1.5pt);
\fill [color=black] (6.92,2.5) circle (1.5pt);
\fill [color=black] (7.5,2) circle (1.5pt);
\fill [color=black] (8,1.5) circle (1.5pt);
\fill [color=black] (9.02,-5.64) circle (1.5pt);
\fill [color=black] (9.54,-5.1) circle (1.5pt);
\end{scriptsize}
\end{tikzpicture}
\caption{An example of a 4-block}
\label{figure_blocks}
\end{figure}

\begin{lemma} \label{lemma_blocks}
Let $\pi \in S_n$ be a~$k$-block and let $\sigma \in S_n$ be an $l$-block. Then $\pi \c \sigma$ is a~$(k\cdot l)$-block.
\end{lemma}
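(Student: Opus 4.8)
The plan is to track how the block decompositions of $\sigma$ and $\pi$ interact under composition. Write $\sigma = s_1 s_2 \cdots s_l$ as a concatenation of $l$ blocks. This induces a partition of the position set $[n]$ into consecutive intervals $P_1, \ldots, P_l$, where $P_j$ is the set of positions occupied by $s_j$; since each $s_j$ consists of consecutive integers and is monotone, $\sigma$ is monotone on $P_j$ and $\sigma(P_j) = V_j$ is an interval in $[n]$. Likewise write $\pi = t_1 \cdots t_k$, obtaining a partition of $[n]$ into consecutive intervals $Q_1, \ldots, Q_k$ such that $\pi$ is monotone on each $Q_m$ and maps $Q_m$ onto an interval of consecutive integers.

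First I would refine the $P_j$. For $j \in [l]$ and $m \in [k]$, set $P_{j,m} = \{ i \in P_j : \sigma(i) \in Q_m \}$, so that $\sigma(P_{j,m}) = V_j \cap Q_m$. Because $\sigma|_{P_j}$ is strictly monotone and the $Q_m$ are consecutive intervals partitioning $[n]$, the interval $V_j$ meets only a contiguous range of the $Q_m$, each $V_j \cap Q_m$ being a subinterval of $V_j$; hence each nonempty $P_{j,m}$, being the preimage of an interval under a monotone injection on an interval, is itself an interval of consecutive positions, and the nonempty ones form a partition of $P_j$ into at most $k$ consecutive subintervals. Over all $j$ this partitions $[n]$ into at most $kl$ consecutive position intervals, listed in left-to-right order (the $P_j$ in order, and within each $P_j$ the pieces in order).

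Then I would check that $\pi \c \sigma$ restricted to any $P_{j,m}$ is a block. On $P_{j,m}$ the map $i \mapsto \sigma(i)$ is monotone (a segment of $s_j$) with image the interval of consecutive integers $V_j \cap Q_m \subseteq Q_m$; restricting $\pi$ to this subinterval of $Q_m$ gives a monotone map whose image is a subinterval of $\pi(Q_m)$, hence again an interval of consecutive integers, being a segment of the block $t_m$. Therefore $i \mapsto \pi(\sigma(i))$ is monotone on $P_{j,m}$ with image an interval of consecutive integers, and $P_{j,m}$ is itself an interval of consecutive positions — exactly the definition of a block. Concatenating these at most $kl$ blocks in left-to-right order reproduces $\pi \c \sigma$, so $\pi \c \sigma$ is a $(k\cdot l)$-block. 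Allowing blocks to be empty makes ``at most $k$'' versus ``exactly $k$'' irrelevant throughout.

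The step that needs the most care — and essentially the only obstacle — is the refinement step: one must be sure that the pieces $P_{j,m}$ are genuinely intervals of positions occurring in the correct order, which relies on $\sigma$ being monotone on each $P_j$ (so that it passes through $Q_1, \ldots, Q_k$ in a monotone order) together with the $Q_m$ being consecutive intervals. Once that is in place, the remaining ingredients — monotone composed with monotone is monotone, and a segment of a block is again a block — are immediate.
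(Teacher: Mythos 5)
Your proof is correct and follows essentially the same route as the paper: the paper also fixes a block of $\sigma$, observes that its image under $\pi$ is a contiguous stretch of $\pi$ read forwards or backwards and hence splits into at most $k$ blocks, and multiplies over the $l$ blocks of $\sigma$. Your refinement into the pieces $P_{j,m}$ just makes this splitting explicit, so the argument matches the paper's, only in greater detail.
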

\begin{proof}
Choose a~block of $\sigma$ at indices $a, a+1, \ldots, a+b$. Then the sequence 
$$\pi(\sigma(a)), \pi(\sigma(a+1)), \ldots, \pi(\sigma(a+b))$$
is a~contiguous subsequence of either 
$\pi(1), \pi(2), \ldots, \pi(n)$ or $\pi(n), \ldots, \pi(2), \pi(1)$
and therefore is a~concatenation of at most $k$ blocks  since $\pi$ itself is a~$k$-block. 
This is true for each of the $l$ blocks of $\sigma$,
therefore $\pi \c \sigma$ is a~$(k\cdot l)$-block.
\end{proof}

Now we can prove our main result.

\begin{thm}\label{l}
The class $\La$ is not composable.
\end{thm}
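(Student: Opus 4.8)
The plan is to argue by contradiction, first reducing to a canonical family of subclasses and then using Lemma~\ref{lemma_blocks} to bound how much structure a composition can build.

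Suppose $\La \subseteq \C_1 \c \cdots \c \C_m$ with each $\C_i$ a proper subclass of $\La$. Each $\C_i$ avoids some layered permutation, and a layered permutation with at most $t$ layers of size at most $a$ is contained in $\delta_a^{\oplus t} := \bigoplus_{i=1}^{t}\delta_a$; hence, taking $a$ and $t$ large enough, we may assume $\C_i \subseteq \D := \La \cap \Av(\delta_a^{\oplus t})$ for every $i$, so that $\La \subseteq \D^m$. A permutation in $\D$ is layered with fewer than $t$ layers of size $\ge a$, hence is the $\oplus$-sum of at most $2t-1$ intervals, each of which (as a pattern) is decreasing or avoids $\delta_a$; note that the class $\Av(\delta_a)\cup\Av(\iota_a)$ that contains all these pieces is hereditary and closed under reversal.

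I would then show, by induction on $j$, that every permutation of $\C_1\c\cdots\c\C_j$ can be cut, at consecutive positions, into at most $K_j = K_j(a,t,m)$ intervals, each lying (as a pattern) in a fixed proper subclass $\mathcal G_j$ of $\La$ manufactured from $\Av(\delta_a)\cup\Av(\iota_a)$. For the inductive step, write such a permutation as $\mu = \nu \c \sigma$ with $\nu \in \C_1\c\cdots\c\C_{j-1}$ and $\sigma \in \C_j \subseteq \D$, and decompose $\sigma$ into its at most $2t-1$ $\oplus$-pieces. By the mechanism of Lemma~\ref{lemma_blocks}, the restriction of $\mu$ to the positions of one such piece equals the restriction of $\nu$ to the value interval of that piece — a window of $\nu$, which inherits a cut from the inductive hypothesis — composed with the pattern of the piece, which is decreasing or avoids $\delta_a$. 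Reversing handles the decreasing pieces directly; for the $\delta_a$-avoiding pieces one invokes Lemmas~\ref{lemma_kl} and~\ref{lemma_behaviour} to see that composing with a permutation of $\I_{a-1}=\Av(\delta_a)$ only merges short windows of $\nu$ and can be absorbed into boundedly many intervals of a class $\mathcal G_j$ whose parameters are allowed to grow with $j$. Summing the contributions of the at most $2t-1$ pieces yields $K_j$. The theorem then follows: the layered permutation $\delta_a^{\oplus N}$ cannot be cut into fewer than $\lceil N/(a-1)\rceil$ intervals from $\Av(\delta_a)\cup\Av(\iota_a)$, since any interval meeting at least $a$ of its layers contains a full layer (so contains $\delta_a$) and the bottom elements of those layers (so contains $\iota_a$); choosing $N$ with $\lceil N/(a-1)\rceil > K_m$ gives $\delta_a^{\oplus N} \in \La \setminus \D^m$, a contradiction.

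The delicate point, and the main obstacle, is the inductive step. In contrast with the earlier theorems about $\La_2$, $\F_2$ and $\La_3$, a proper subclass of $\La$ can contain permutations of unbounded complexity in essentially every naive sense — for example $\bigoplus_{i=1}^{n}\delta_2 \in \La\cap\Av(\delta_3)$ is an $n$-block — so one cannot merely bound a ``number of blocks''. The danger is that composing with the $\delta_a$-avoiding pieces of $\sigma$, which carry the many short layers of a $\D$-permutation, produces an uncontrolled number of intervals; the resolution is that such a composition only redistributes windows of $\nu$ of length less than $a$, so after re-merging it still decomposes into boundedly many pieces of a slightly larger model class — which is exactly why the parameters defining $\mathcal G_j$ have to grow with $j$, mirroring the passage from $k$ to $kl$ in Lemma~\ref{lemma_blocks}. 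Making this bookkeeping precise, together with the behaviour at the boundaries between the pieces of $\sigma$, is the technical heart of the argument.
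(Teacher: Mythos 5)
Your overall strategy --- show that every element of $\C_1\c\cdots\c\C_m$ splits at consecutive positions into boundedly many intervals of bounded complexity, then count how many such intervals a long layered permutation needs --- is a genuinely different route from the paper, which instead replaces the short layers of each factor $\sigma$ by increasing runs to get a $(2n)$-block $N(\sigma)$, proves by induction that $\sigma_k\c\cdots\c\sigma_1$ is $(2nk,8n^2k^2)$-close to $N(\sigma_k)\c\cdots\c N(\sigma_1)$, and then exhibits a layered permutation with many long layers that is far from every bounded-block permutation. Your target structural statement is in fact true (it can be deduced from the paper's closeness result), but as written your argument does not close. As you yourself note, the induction can only deliver intervals in classes $\mathcal G_j$ whose parameters grow with $j$, say $\mathcal G_j\subseteq\Av(\delta_{a_j})\cup\Av(\iota_{a_j})$ with $a_j>a$ for $j\ge 2$; yet your final contradiction is run with the fixed test permutation $\delta_a^{\oplus N}$ against intervals from $\Av(\delta_a)\cup\Av(\iota_a)$. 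Since the longest decreasing subsequence of $\delta_a^{\oplus N}$ has length $a<a_m$, that permutation lies entirely in $\Av(\delta_{a_m})\subseteq\mathcal G_m$, i.e.\ it is a \emph{single} $\mathcal G_m$-interval, and no contradiction results. The repair is easy but must be made: choose the test permutation after $\mathcal G_m$ and $K_m$ are known, e.g.\ $\bigoplus_{i=1}^{N}\delta_{a_m}$ (legitimate, since the hypothesis covers all layered permutations), and prove the interval-count lower bound against $\Av(\delta_{a_m})\cup\Av(\iota_{a_m})$. (Also, $\mathcal G_j$ cannot literally be a ``proper subclass of $\La$'': intervals of these compositions, and $\Av(\delta_a)\cup\Av(\iota_a)$ itself, are not layered.)

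The more serious gap is that the inductive step, which you rightly call the heart of the argument, is not actually proved, and the lemmas you invoke do not do the job. Lemma~\ref{lemma_behaviour}(c) says that composing on the right with an \emph{arbitrary} member of $\I_{a-1}$ amounts to cutting the outer permutation into $a-1$ subsequences and interleaving them arbitrarily; that operation scatters your position-intervals rather than preserving them, and Lemma~\ref{lemma_kl} only bounds the global length of decreasing subsequences --- neither yields a cut of the composed window into at most roughly $K_{j-1}$ intervals lying in a union class. What your step really needs is the specific structure of the $\oplus$-pieces of a factor: each such piece is layered with all layers shorter than $a$, hence it only reverses consecutive position-blocks of length less than $a$ (so it displaces every position by less than $a$). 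With that, one can shift each inherited cut point by less than $a$ to align it with a layer boundary of the piece, check that each new interval is an old $\mathcal G_{j-1}$-piece with $O(a)$ elements added or removed at its ends composed with a consecutive block of short layers, and then apply Lemma~\ref{lemma_kl}/\ref{lemma_extrakl} to bound its $\delta$- and $\iota$-content, while also accounting for what happens at the boundaries between consecutive pieces of the factor. None of this bookkeeping is carried out in the proposal, so at present it is an outline of a plausible alternative proof whose central step (and a consistent endgame) is still missing.
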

\begin{proof}

Every subclass of $\La$ is determined by one or more forbidden layered permutations.
If $\La$ is composable from $k$ subclasses, we may choose one forbidden layered permutation from each of them
and then choose $n$ large enough so that $\pi = \bigoplus_{i=1}^{n+1}\delta_{n+1}$ contains all of the chosen patterns.
That way, $\La \subseteq \C^k$ where $\C = \Av(\pi) \cap \La$.

Clearly every permutation in $\C$ has at most $n$ layers longer than $n$, otherwise it would contain $\pi$.
Our goal is to show that permutations in $\C^k$ are somehow very close to patterns composed from permutations 
that have a~constant number of
non-trivial layers and all other layers are just of size 1. Given a layered permutation we call
a layer of length at most $n$ a \emph{short} layer and a layer of length more than $n$ a \emph{long} layer. 

We say that two permutations $\alpha$ and $\beta$ are $(c,l)$\emph{-close}, if $|\alpha(i) - \beta(i)| \leq c$
for every index $i$ with at most $l$ exceptions.

%are $(c,l)$-close if you can transform one into another by changing at most $l$ values arbitrarily and all other by at most $c$.

For $\sigma \in \C$ we denote by $N(\sigma)$ the permutation created from $\sigma$ by replacing every short layer
by the corresponding number of layers of size 1, i.e. flipping the short layers into increasing blocks. 

We can now formally state our goal: we shall prove that for any $\sigma_1, \sigma_2, \ldots, \sigma_k \in \C$
the permutations $\sigma_k \c \cdots \c \sigma_2 \c \sigma_1$ and $N(\sigma_k) \c \cdots \c N(\sigma_2) \c N(\sigma_1)$
are $(2nk, 8n^2k^2)$-close. We will prove this by induction on $k$.

If $k=1$, we have to show that $\sigma_1$ and $N(\sigma_1)$ are $(2n,8n^2)$-close. Since $N(\sigma)$ is created
by manipulating layers of $\sigma$ of length at most $n$ in place, every element of $\sigma$ is shifted by at most $n$, so they
are even $(n,0)$-close, thus the first step of induction is done.

If $k \geq 2$, suppose that $\sigma = \sigma_k \c \cdots \c \sigma_2 \c \sigma_1$ and 
$\nu = N(\sigma_k) \c \cdots \c N(\sigma_2) \c N(\sigma_1)$
are $(2nk, 8n^2k)$-close and we shall prove the statement for $k+1$.

Given a~layered permutation and one of its layers of size $l+1$ at indices $i, i+1, \ldots, i+l$,
we say that a~number $u$ is \emph{in the area of influence} of this layer if $i \leq u \leq i+l$.

Given a long layer of $\sigma_{k+1}$ or $N(\sigma_{k+1})$ (their long layers are the same),
there are at most $4nk$ indices $u$ such that $|\sigma(u) - \nu(u)| \leq kn$ and
 $\nu(u)$ is in the area of influence of this layer and $\sigma(u)$ is not: at most $2nk$ to the left and to the right of the layer.
 Similarly there are at most $4nk$ indices $u$ such that $|\sigma(u) - \nu(u)| \leq kn$ and $\sigma(u)$ is in the area
 of influence of the considered layer and $\nu(u)$ is not. Since there are at most $n$ long layers, we get that
 in total there are at most $8n^2k$ indices $u$ such that $|\sigma(u) - \nu(u)| \leq kn$ and one of $\{\sigma(u), \nu(u)\}$
 is in the area of influence of a long layer while the other is not in that area.

 By the induction hypothesis, there are at most $8n^2k^2$ indices $u$ such that $|\sigma(u) - \nu(u)| > nk$.
 Together with the at most $8n^2k$ indices from the previous paragraph we get $8n^2k^2 + 8n^2k = 8n^2k(k+1) \leq 8n^2(k+1)^2$
 indices at which we will allow $\sigma_{k+1} \c \sigma$ and $N(\sigma_{k+1}) \c \nu$ to differ arbitrarily in our
 proof that these two permutations are $(2n(k+1), 8n^2(k+1)^2)$-close. It remains to show 
 $|\sigma_{k+1}(\sigma(u)) - N(\sigma_{k+1}(\nu(u))| \leq n(k+1)$ for all the remaining indices $u$ to complete the induction step.

%The same goes the other way around: for each long layer of $\sigma_{k+1}$ there are at most $2nk$ indices $u$ such that
%$\nu(u)$ is not in the area of influence of this layer but $\sigma(u)$ is. Since there are at most $n$ long layers,
%we get that there are at most $4n^2k$ such numbers $u$ which leave or enter the area of influence of a~long layer.
%For all these numbers we allow the values $\sigma_{k+1}(\sigma(u))$ and $N(\sigma_{k+1})(\nu(u))$ to differ arbitrarily.
%Together with at most $4n^2k^2$ numbers from the previous step of induction we get $4n^2k^2 + 4n^2k = 4n^2k(k+1) \leq 4n^2(k+1)^2$
%values which satisfies the condition of the statement. It remains to show that all other values change by at most $n(k+1)$.

For other indices $u \in \{1, 2, \ldots, |\sigma|\}$ not considered so far it holds that $|\nu(u) - \sigma(u)| \leq 2nk$
and that either $\nu(u)$ and $\sigma(u)$ are both in the area of influence of the same long layer of $\sigma_{k+1}$
or they are in areas of influence of short or trivial layers. In the latter case the value of $\sigma(u)$ changes by
at most $n$ after applying $\sigma_{k+1}$ to it and similarly the value of $\nu(u)$ changes by at most $n$ after applying
$N(\sigma_{k+1})$ to it, thus
$|N(\sigma_{k+1})(\nu(u)) - \sigma_{k+1}(\sigma(u))| \leq 2nk + 2n \leq n(k+1)$. In the former case
it is enough to realise that for a given decreasing permutation $\delta_a$ it holds that
$\delta_a(x\pm y) = \delta_a(x) \mp y$, thus if $\sigma(u)$ differs by
$y$ from $\nu(u)$ and they are in the area of influence of the same long layer, after applying $\sigma_{k+1}$ (or $N(\sigma_{k+1})$, which
is the same for the big layers) the values still differ by $y \leq 2nk \leq 2n(k+1)$, which finishes
the induction step.

Notice that for $\sigma \in \C$ the permutation $N(\sigma)$ is a~$(2n)$-block
according to the definition above. 
Thus by Lemma \ref{lemma_blocks}
we get that by composing $k$ such permutations we get a~permutation which is a~$(2n)^k$-block.
As a~result we get that each permutation from $(\C)^k$ is $(c,l)$-close 
to a~$C$-block for suitable fixed constants $c,l,C$. Notice
now that every $C$-block avoids the $(C+1)$-block $\gamma = 214365\cdots(2C+2)(2C+1)$,
so every permutation from $(\C)^k$ is $(c,l)$-close to a~permutation avoiding $\gamma$. We
can construct a~layered permutation which is not $(c,l)$-close to any $\gamma$-avoider as follows.
Choose a~layered permutation with $C+1$ layers of length $l+2c+1$ and consider a~permutation
$(c,l)$-close to it. Then in every layer there are at least $2c+1$ elements whose value 
changed by at most $c$; therefore there exist at least two elements which remained in decreasing order.
Choosing these two elements from every layer forms an occurrence of $\gamma$.
Since $\La$ contains a~permutation which is not $(c,l)$-close to $\gamma$ and $(\C)^k$
does not contain such permutations, we get that $\La \nsubseteq (\C)^k$, achieving contradiction.
\end{proof}

Preceding results and Lemma \ref{lemma_symcomp} imply the following corollary.
\begin{cor}
The classes of co-layered permutations $\La_2^c$, $\La_3^c$ and $\La^c$ are not composable.
\end{cor}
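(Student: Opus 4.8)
The plan is to obtain this purely formally from the uncomposability of $\La_2$, $\La_3$ and $\La$ (Theorems~\ref{l2}, \ref{l3} and \ref{l}) together with Lemma~\ref{lemma_symcomp}, used in its contrapositive form. The only point requiring verification is the hypothesis $\I \subsetneq \A$ of that lemma for each of the classes $\A \in \{\La_2^c,\La_3^c,\La^c\}$.

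First I would check that each of these complementary classes properly contains $\I$. For any $n$ the decreasing permutation $\delta_n$ is layered with a single layer, so $\delta_n \in \La_2$, and hence $\iota_n = \delta_n^c$ lies in $\La_2^c \subseteq \La_3^c \subseteq \La^c$; thus $\I$ is contained in all three classes. On the other hand $12 = \delta_1 \oplus \delta_1 \in \La_2$, so $21 = 12^c \in \La_2^c$, and $21 \notin \I$. Therefore $\I \subsetneq \La_2^c$, $\I \subsetneq \La_3^c$ and $\I \subsetneq \La^c$.

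Now suppose for contradiction that $\La_2^c$ is composable, i.e.\ $k$-composable for some $k \geq 2$. Since $\I \subsetneq \La_2^c$, Lemma~\ref{lemma_symcomp} applied with $\A = \La_2^c$ shows that $(\La_2^c)^c$ is $(2k-1)$-composable. The complementation operator is involutory, so $(\La_2^c)^c = \La_2$; moreover $2k-1 \geq 3 \geq 2$, so $\La_2$ is composable, contradicting Theorem~\ref{l2}. The same argument, with $\La_3$ and Theorem~\ref{l3} in place of $\La_2$ and Theorem~\ref{l2}, rules out composability of $\La_3^c$, and with $\La$ and Theorem~\ref{l} it rules out composability of $\La^c$. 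I do not expect any genuine obstacle in this corollary: it is a short deduction from earlier results, and the only things to be careful about are the applicability check $\I \subsetneq \A$ carried out above and the harmless observation that $(2k-1)$-composability with $k \geq 2$ does count as composability, since $2k-1 \geq 2$.
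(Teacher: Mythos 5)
Your proof is correct and is exactly the argument the paper intends: the corollary is stated as an immediate consequence of Theorems~\ref{l2}, \ref{l3}, \ref{l} and Lemma~\ref{lemma_symcomp}, used contrapositively just as you do, and your verification of the hypothesis $\I \subsetneq \La_2^c \subseteq \La_3^c \subseteq \La^c$ is the right (and only) detail to check.
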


\section{Other results}
\label{ch5}

In the final section of this work we collect several miscellaneous results concerning composability.
First we provide more examples of composable classes, and then we fininish by presenting
 several additional examples of uncomposable classes.

\subsection{Composable principal classes}

In this section, we use results of Section \ref{ch3} and of \cite{JelinekValtr13}
to prove that many classes avoiding a~single decomposable pattern (a permutation which can
be written as a~non-trivial sum of smaller permutations) are composable.

We will base our proof on the following splittability result of Jelínek and Valtr \cite{JelinekValtr13}.

\begin{lemma}[Jelínek, Valtr \cite{JelinekValtr13}]\label{splitlemma}
Let $\alpha, \beta, \gamma$ be three nonempty permutations and let $\pi \in \Av(\alpha \oplus \beta \oplus \gamma)$.
Then $\pi$ can be merged from two sequences $(a)_{i=1}^n$ and $(c)_{i=1}^m$ such that $a$ avoids $\alpha \oplus \beta$,
$c$ avoids $\beta \oplus \gamma$ and for any $i \in [n]$ and $j \in [m]$ either $\pi^{-1}(a_i) < \pi^{-1}(c_j)$ or $a_i < c_j$.
\end{lemma}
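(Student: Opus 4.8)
The plan is to produce the required two-colouring of the points of $\pi$ as the two sides of a monotone staircase. First I would translate the conclusion into geometric language: writing $\pi$ as a merge of a red subsequence $a$ and a blue subsequence $c$, the condition ``$\pi^{-1}(a_i)<\pi^{-1}(c_j)$ or $a_i<c_j$ for all $i,j$'' says exactly that no red point lies strictly to the north-east of a blue point; equivalently, in the dominance order on the points of $\pi$ (greater position \emph{and} greater value), the red points form an order ideal $R$ and the blue points the complementary filter $B$ — that is, $R$ and $B$ are separated by a monotone staircase in the $N\times N$ grid. So it suffices to find an ideal $R$ with $\pi|_R\in\Av(\alpha\oplus\beta)$ and $\pi|_B\in\Av(\beta\oplus\gamma)$.

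The natural first attempt is the greedy ideal ``a point is red iff no copy of $\alpha$ lies strictly to its south-west''. This is automatically an order ideal, and its red part avoids $\alpha\oplus\beta$ essentially for free: in any occurrence of $\alpha\oplus\beta$ every point of the $\beta$-block dominates the whole $\alpha$-block (here one uses that $\beta$ is nonempty), so such a point cannot be red. Unfortunately small examples show that the blue part of this particular ideal need not avoid $\beta\oplus\gamma$, so more care is needed. I would instead take $R$ to be an ideal that is \emph{maximal with respect to inclusion} among all ideals whose restriction avoids $\alpha\oplus\beta$ — such an $R$ exists by finiteness — and again $\pi|_R\in\Av(\alpha\oplus\beta)$ holds by construction.

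The crux, and the step I expect to be the main obstacle, is showing that for this maximal $R$ the complement $B$ avoids $\beta\oplus\gamma$. Suppose not; fix an occurrence of $\beta\oplus\gamma$ inside $B$, with $\beta$-block $P_\beta$ and $\gamma$-block $P_\gamma$ lying strictly to its north-east. Since each point of $P_\beta$ is blue, adjoining to $R$ one such point together with everything it dominates yields a strictly larger ideal, which by maximality must contain a copy of $\alpha\oplus\beta$; this new copy must make essential use of the newly added points (else it would already sit inside $R$). By choosing the adjoined point carefully — say the leftmost, or the lowest, point of $P_\beta$ — and analysing where this new copy of $\alpha\oplus\beta$ lies relative to $P_\beta$ and $P_\gamma$, one should be able to extract a copy of $\alpha$ lying strictly to the south-west of \emph{all} of $P_\beta$; then $\alpha$, $P_\beta$, $P_\gamma$ would form a copy of $\alpha\oplus\beta\oplus\gamma$ in $\pi$, a contradiction. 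The delicate point is that a general pattern $\beta$ has no single ``corner'' dominating all of $P_\beta$, so this last extraction requires a genuinely careful extremal argument (or an induction on $|\pi|$ reducing to the sum-indecomposable case). This is precisely the content of the cited splittability result of Jelínek and Valtr, so I would either reconstruct their argument along these lines or simply invoke \cite{JelinekValtr13}.
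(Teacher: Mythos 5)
You should know that the paper does not prove this lemma at all: it is imported verbatim as a result of Jelínek and Valtr, so the fallback you end with (``simply invoke \cite{JelinekValtr13}'') is exactly what the paper itself does, and to that extent your proposal matches the paper. Your geometric reformulation is correct: the stated condition on $a$ and $c$ says precisely that no $a$-point lies strictly north-east of a $c$-point, i.e.\ the $a$-points form an order ideal in the dominance order, so one seeks a staircase split. Your remark that the naive colouring ``blue iff some copy of $\alpha$ lies strictly south-west'' can fail is also right; for instance $\pi=34125$ with $\alpha=\gamma=1$, $\beta=21$ avoids $\alpha\oplus\beta\oplus\gamma=1324$, yet the greedy blue part is $4\,2\,5$, a copy of $213=\beta\oplus\gamma$.

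As a self-contained proof, however, the proposal has a genuine gap, and it sits exactly where you flag it. For an inclusion-maximal ideal $R$, adjoining the down-set of a blue point $q$ of $P_\beta$ does force a new copy of $\alpha\oplus\beta$, and its $\beta$-part must contain a point dominated by $q$ (if the whole $\beta$-part lay in $R$, then the $\alpha$-part, being dominated by it, would lie in $R$ as well since $R$ is an ideal, so the entire occurrence would sit inside $R$). But this means all that maximality buys you, for each individual blue point $q$, is a copy of $\alpha$ strictly south-west of that single $q$ --- which is precisely the information the greedy colouring already provides and which your own counterexample shows is insufficient to exclude $\beta\oplus\gamma$ from the blue part. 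In particular, taking $q$ to be the leftmost or the lowest point of $P_\beta$ does not yield a copy of $\alpha$ south-west of all of $P_\beta$ (the leftmost choice controls positions but not values, the lowest choice the reverse), and this remains unclear even in the paper's intended application where $\beta$ is decreasing. So the crucial extraction step is missing, it is not even clear that an arbitrary maximal ideal works, and the argument would have to be replaced by the actual Jelínek--Valtr proof or, as in the paper, by the citation.
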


\begin{thm}\label{thm_52}
  If $\alpha$ and $\gamma$ are any non-empty permutations and $\beta = \delta_n$ for a~positive integer $n$, then
  $$\Av(\alpha \oplus \beta \oplus \gamma) \subseteq (\V(\Av(\alpha \oplus \beta),\Av(\beta\oplus \gamma))
\cap \Av(\alpha \oplus \beta \oplus \gamma)) \c \Ho.$$
In particular, $\Av(\alpha \oplus \delta_n \oplus \gamma)$ is 2-composable whenever $\alpha \oplus \delta_n \oplus \gamma
\notin \Ho$.
\end{thm}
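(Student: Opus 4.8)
The plan is to derive the displayed inclusion directly from Lemma \ref{splitlemma} (the Jelínek--Valtr splitting result with $\beta=\delta_n$) together with the machinery of Section \ref{ch3}, in particular Lemma \ref{lemma_behaviour}(a) and the vertical-merge construction. First I would take an arbitrary $\pi\in\Av(\alpha\oplus\delta_n\oplus\gamma)$ and apply Lemma \ref{splitlemma} to obtain a merge of $\pi$ from two subsequences $a$ and $c$, where $a$ avoids $\alpha\oplus\delta_n$, $c$ avoids $\delta_n\oplus\gamma$, and for every $i,j$ either $\pi^{-1}(a_i)<\pi^{-1}(c_j)$ or $a_i<c_j$. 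The key observation is that this ``separation'' condition is exactly what is needed to reorder the two colour classes into a single permutation $\nu$ which is a concatenation: namely, sort all the indices so that, among the positions used by $a$ and $c$, the $a$-positions that are ``horizontally before'' stay before, and break ties using the value order. Concretely I would construct $\nu$ by declaring that the $a$-block comes first and the $c$-block second in a new permutation whose relative order inside each block matches $\pi$; the dichotomy guarantees this is consistent, i.e.\ $\nu$ is order-isomorphic to a concatenation $s_1 s_2$ with $s_1$ order-isomorphic to the $a$-pattern and $s_2$ to the $c$-pattern. Since the $a$-pattern avoids $\alpha\oplus\delta_n$ and the $c$-pattern avoids $\delta_n\oplus\gamma$, we get $\nu\in\V(\Av(\alpha\oplus\delta_n),\Av(\delta_n\oplus\gamma))$.

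Next I would check that $\nu$ also avoids $\alpha\oplus\delta_n\oplus\gamma$. This is the one genuinely nontrivial verification: a priori, rearranging $\pi$ into $\nu$ could create the forbidden pattern. Here the role of $\beta=\delta_n$ is essential — an occurrence of $\alpha\oplus\delta_n\oplus\gamma$ in $\nu$ would have to place its $\alpha$-part and $\gamma$-part either both inside $s_1$, both inside $s_2$, or split across the two blocks; the first two cases are excluded because $s_1$ avoids $\alpha\oplus\delta_n$ (which contains $\alpha\oplus\delta_n\oplus\gamma'$-like subpatterns is not quite it, so one argues more carefully) and $s_2$ avoids $\delta_n\oplus\gamma$; the split case forces the middle $\delta_n$ to straddle the block boundary, and one shows this yields a $\delta_n$ occurrence whose preimages under the original merge, using the separation dichotomy, produce an $\alpha\oplus\delta_n\oplus\gamma$ in $\pi$ itself, a contradiction. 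I expect this case analysis to be the main obstacle, and it is where I would spend the most care; the decreasing nature of $\beta$ is what makes the straddling argument go through, since any long decreasing sequence in $\nu$ can be pulled back along the two sorted blocks.

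Having placed $\nu$ in $\V(\Av(\alpha\oplus\delta_n),\Av(\delta_n\oplus\gamma))\cap\Av(\alpha\oplus\delta_n\oplus\gamma)$, I would invoke Lemma \ref{lemma_behaviour}(a): $\nu\c\Ho_2=\nu\c\Ho$ is precisely the set of permutations obtained from $\nu$ by cutting it into at most two contiguous blocks and interleaving them arbitrarily. Since $\pi$ is obtained from $\nu$ exactly by un-concatenating $s_1 s_2$ and re-interleaving the two pieces back into their original positions — this is what ``$\pi$ is a merge of $a$ and $c$'' means once we know $\nu$ records the two pieces as contiguous blocks — we conclude $\pi\in\nu\c\Ho$, which gives the displayed inclusion. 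For the ``in particular'' clause, note that $\V(\cdots)\cap\Av(\alpha\oplus\delta_n\oplus\gamma)$ is a proper subclass of $\Av(\alpha\oplus\delta_n\oplus\gamma)$ (since, e.g., $\iota_{|\alpha\oplus\delta_n|}\oplus$ something outside is in the big class but not the vertical merge, or more simply any permutation of the class that is not a $2$-part concatenation of the required type), and $\Ho=\Ho_2\subsetneq\Av(\alpha\oplus\delta_n\oplus\gamma)$ precisely when $\alpha\oplus\delta_n\oplus\gamma\notin\Ho$, because $\Ho=\Av(\,\cdot\,)$ has a basis and the condition $\alpha\oplus\delta_n\oplus\gamma\notin\Ho$ says $\Ho$ already avoids this pattern hence is contained in, and properly so, the principal class. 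Thus both factors are proper subclasses and $\Av(\alpha\oplus\delta_n\oplus\gamma)$ is $2$-composable.
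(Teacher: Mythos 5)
Your overall route is the same as the paper's: apply Lemma \ref{splitlemma}, pass to the concatenation $\sigma=s_1s_2$ of the two colour classes (note this needs no appeal to the dichotomy --- it is just the vertical merge), recover $\pi$ from $\sigma$ via Lemma \ref{lemma_behaviour}(a) (equivalently Corollary \ref{cor_splitcompose}), and finally check properness of the two factors. The genuine gap sits exactly at the step you yourself flag as the main obstacle: you never prove that $\sigma$ avoids $\alpha\oplus\delta_n\oplus\gamma$, and the case analysis you sketch is organised around the wrong pieces. The correct split is on where the middle $\delta_n$-part $b$ of a hypothetical occurrence sits: if $b$ lies entirely in $s_1$, then everything preceding it in $\sigma$, in particular the whole $\alpha$-part, also lies in $s_1$, so $s_1$ contains $\alpha\oplus\delta_n$, a contradiction; symmetrically if $b\subseteq s_2$. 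Hence $b$ meets both blocks (for $n=1$ this is already absurd). For $n>1$ one must then pull the occurrence back to $\pi$, and this is where the decreasingness of $\beta$ and the dichotomy are actually used: if two elements of $b$ changed relative order between $\sigma$ and $\pi$, the earlier (hence larger) one would have to lie in the $a$-part and the later (smaller) one in the $c$-part, and the condition ``$\pi^{-1}(a_i)<\pi^{-1}(c_j)$ or $a_i<c_j$'' forbids precisely this swap; so $b$ is decreasing in the same order in $\pi$. Moreover, since $b$ meets both blocks, the $\alpha$-part lies in $s_1$ and the $\gamma$-part in $s_2$, and by transitivity through the elements of $b$ in each block the positional order ($\alpha$-part, then $b$, then $\gamma$-part) survives in $\pi$, while the value conditions are untouched; thus $\pi$ itself would contain $\alpha\oplus\delta_n\oplus\gamma$, the desired contradiction. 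Your sketch merely asserts this pull-back, and, as you half-notice yourself, ``$s_1$ avoids $\alpha\oplus\delta_n$'' does not by itself exclude the $\alpha$- and $\gamma$-parts both lying in $s_1$.

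The properness check is also looser than it should be. For $\V(\A,\B)\cap\C\subsetneq\C$ you need an explicit witness, and ``any permutation of the class that is not a $2$-part concatenation of the required type'' is circular; the paper exhibits $(\alpha\oplus\beta)\ominus(\beta\oplus\gamma)$, which lies in $\C$ but in no concatenation of an $\Av(\alpha\oplus\beta)$-part followed by an $\Av(\beta\oplus\gamma)$-part. For $\Ho\subsetneq\C$, the inclusion does follow from $\alpha\oplus\delta_n\oplus\gamma\notin\Ho$ as you say, but properness is not automatic from ``$\Ho$ has a basis'': one uses Atkinson's result quoted in the paper that the basis of $\Ho$ has three elements, so $\Ho$ cannot coincide with a principal class.
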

\begin{proof}

Let $\C = \Av(\alpha \oplus \beta \oplus \gamma)$, $\A = \Av(\alpha \oplus \beta)$ and $\B = \Av(\beta \oplus \gamma)$.
Lemma \ref{splitlemma} and Corollary \ref{cor_splitcompose} immediately imply that
$\C \subseteq \V(\A,\B) \c \Ho.$

Let $\pi \in \C$ be merged from sequences $a$ and $c$ as in Lemma \ref{splitlemma}
and let $\sigma = ac$. We have to show that $\sigma \in \C$. Suppose for a~contradiction that $\sigma$
contains a~copy of $\alpha \oplus \beta \oplus \gamma$. Let $b$ be the decreasing subsequence of $\sigma$
representing the occurrence of $\beta$. Then $b$ cannot be
contained entirely in $a$ or in $c$ since that would create a~copy of $\alpha \oplus \beta$ in $a$
or of $\beta \oplus \gamma$ in $c$. Thus if $\beta = 1$ the contradiction is reached immediately. 

If $|\beta| > 1$, we would like to show that $b$ is also a~subsequence of $\pi$. Assume it is not,
therefore there are elements $b_i$ and $b_j$ with $i < j$ such that they appear in reverse
order in $\pi$. That can only be achieved if $b_i$ is in $a$ and $b_j$ is in $c$,
which together with $b_i > b_j$ contradicts the properties of $a$ and $c$ from Lemma \ref{splitlemma}.

It follows that the entire occurrence of $\alpha \oplus \beta \oplus \gamma$ is also contained in $\pi$,
which is a~contradiction, thus $\C \subseteq (\V(\A, \B) \cap \C) \c \Ho$.

To prove that $\C$ is really 2-composable for $\alpha \oplus \beta \oplus \gamma \notin \Ho$ it remains
to verify that $\V(\A,\B) \cap \C$ and $\Ho$ are proper subclasses of $\C$. Clearly
$\V(\A,\B) \cap \C \subseteq \C$ and the condition $\alpha \oplus \beta \oplus \gamma \notin \Ho$ implies
$\Ho \subseteq \C$, so it remains to show that the inclusions are proper.
Consider the permutation $(\alpha \oplus \beta) \ominus (\beta \oplus \gamma)$ which is clearly in $\C$
and not in $\V(\A,\B)$. For the class $\Ho$ we use the results of Atkinson, who showed in \cite[Proposition 3.4]{Atkinson99} 
that the class $\Ho$ has
a basis of size 3~and therefore it cannot be equal to a~principal class.
\end{proof}

Note that for the case $\beta = 1$ we get 
\begin{equation*} \V(\Av(\alpha \oplus 1), \Av(1 \oplus \gamma)) \subsetneq \Av(\alpha \oplus 1 \oplus \gamma), \end{equation*}
and thus we may omit the intersection with $\Av(\alpha \oplus 1 \oplus \gamma)$ in the formula of Theorem \ref{thm_52}.
Indeed, if a~permutation is concatenated of two parts, first avoiding $\alpha \oplus 1$ and the second avoiding $1 \oplus \gamma$,
such a~permutation cannot contain an occurrence of $\alpha \oplus 1 \oplus \gamma$ since one of the two parts would contain
the middle 1~and thus the forbidden pattern.

\subsection{More uncomposable classes}

So far we have used classes such as $\V$ or $\Ho$ to prove that other classes are composable.
In this section, we will show that these classes, and classes similar to them, are themselves uncomposable.

We call a~permutation $\eta \in \Ho$ \emph{alternating} if $\eta(2i-1) < \eta(2i) > \eta(2i+1)$ for all
possible values of $i$. We will use the following simple observation about alternating permutations in $\Ho$.
\begin{figure}[ht]
\centering
\begin{tikzpicture}[line cap=round,line join=round,>=triangle 45,x=0.5cm,y=0.5cm]
\clip(-2.5,-3.09) rectangle (5.52,4.99);
\draw (-2,4.5)-- (-2,-2.5);
\draw (-2,-2.5)-- (5,-2.5);
\draw (5,-2.5)-- (5,4.5);
\draw (5,4.5)-- (-2,4.5);
\draw (-2,1.5)-- (5,1.5);
\begin{scriptsize}
\fill [color=black] (-1.5,-2) circle (1.5pt);
\fill [color=black] (-0.5,2) circle (1.5pt);
\fill [color=black] (0.5,-1) circle (1.5pt);
\fill [color=black] (1.5,3) circle (1.5pt);
\fill [color=black] (2.5,0) circle (1.5pt);
\fill [color=black] (3.5,4) circle (1.5pt);
\fill [color=black] (4.5,1) circle (1.5pt);
\end{scriptsize}
\end{tikzpicture}
\caption{The alternating permutation of length 7}
\end{figure}
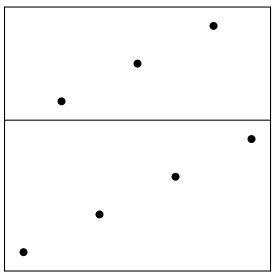

\begin{obs}\label{obs_alt}
Every permutation from $\Ho$ is contained in an alternating permutation from $\Ho$.
\end{obs}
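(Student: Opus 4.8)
The plan is to name one explicit family of alternating permutations in $\Ho$ and then embed an arbitrary $\pi\in\Ho$ into a sufficiently large member of that family. For $m\geq 0$ let $A_m$ be the permutation of length $2m+1$ whose odd positions $1,3,\dots,2m+1$ carry the values $1,2,\dots,m+1$ in increasing order and whose even positions $2,4,\dots,2m$ carry the values $m+2,\dots,2m+1$ in increasing order. A direct check shows $A_m$ is alternating (odd positions are valleys, even positions are peaks) and that $A_m\in\Ho$, since its low values occupy an increasing subsequence of positions, its high values occupy an increasing subsequence of positions, and every low value lies below every high value. So it will suffice to prove that an arbitrary $\pi\in\Ho$ of length $n$ satisfies $\pi\leq A_n$.

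Next I would use the structure of $\Ho=\Ho(\I,\I)$: the values of $\pi$ split into a bottom block $\{1,\dots,a\}$ occupying an increasing subsequence of positions and a top block $\{a+1,\dots,n\}$ occupying an increasing subsequence of positions, with every bottom value below every top value; call a position of $\pi$ \emph{low} or \emph{high} accordingly. Reading the positions of $\pi$ from left to right, I would greedily pick positions $s_1<s_2<\cdots<s_n$ of $A_n$ so that $s_t$ is odd precisely when position $t$ of $\pi$ is low: at step $t$, with $M$ the largest position used so far (starting from $0$), take $s_t$ to be the least position of $A_n$ greater than $M$ of the required parity. Since this position is $M+1$ or $M+2$, after $n$ steps the largest chosen position is at most $2n\leq 2n+1$, so all the $s_t$ lie inside $A_n$; there are also enough odd positions ($n+1\geq a$) and even positions ($n\geq n-a$) available.

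Finally I would verify that $A_n$ restricted to $\{s_1,\dots,s_n\}$ is order-isomorphic to $\pi$. Among the low positions of $\pi$ the values increase left to right (they are $1,\dots,a$ in that order), and the matching $s_t$ are odd positions of $A_n$ selected left to right, whose values likewise increase; the same holds for the high positions; and in $\pi$ a low value is below a high value, exactly as in $A_n$ a value at an odd position is below one at an even position. Hence the restriction realizes $\pi$, giving $\pi\leq A_n$ with $A_n$ alternating and in $\Ho$. I do not foresee a genuine obstacle; the only points needing a moment's care are that the greedy selection never overruns $A_n$ (handled by the ``two new positions per step'' bound) and that all three comparison types (low--low, high--high, low--high) in $\pi$ are respected by the chosen positions of $A_n$.
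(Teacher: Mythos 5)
Your proof is correct: the permutation $A_n = 1,(n+2),2,(n+3),\ldots$ is indeed alternating and lies in $\Ho$, the greedy parity-respecting choice of positions stays within length $2n+1$, and the three comparison cases (low--low, high--high, low--high) exactly match the structure of $A_n$, giving $\pi \leq A_n$. The paper states this as an observation without proof, and your argument is precisely the natural explicit construction that the authors evidently had in mind, so there is nothing to object to.
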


\begin{prop} \label{VHstuff}
The classes $\V$, $\V^c$, $\V(\D, \I)$, $\V(\I, \D)$, $\Ho$, $\Ho^c$, $\Ho(\I, \D)$ \\
and $\Ho(\D, \I)$ are not composable.
\end{prop}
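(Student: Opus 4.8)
The plan is to reduce the eight classes to two representatives using symmetry, and then to dispose of each representative by a counting argument in the spirit of Theorems~\ref{l2} and~\ref{f2}, this time exploiting a gap between polynomial and super-polynomial growth. First I would observe that the eight classes form exactly two orbits under the involutions $\A\mapsto\A^{-1}$, $\A\mapsto\A^r$, $\A\mapsto\A^c$: using Lemma~\ref{lemma_VHinvert} and Lemma~\ref{lemma_basicsym}(b) one checks that $\V^{-1}=\Ho$, that $\V^r=\V^c$ and $\Ho^r=\Ho^c$, and that $\V(\I,\D)^{-1}=\Ho(\I,\D)$, $\V(\I,\D)^c=\V(\D,\I)$, $\V(\I,\D)^r=\V(\I,\D)$; so the orbits are $\{\V,\V^c,\Ho,\Ho^c\}$ and $\{\V(\I,\D),\V(\D,\I),\Ho(\I,\D),\Ho(\D,\I)\}$. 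Since each of these involutions is a cardinality-preserving isomorphism of the containment order on every $\S_n$ that carries permutation classes to permutation classes, and since the argument below uses only such order-theoretic and counting features, it suffices to carry it out for one representative of each orbit, say $\Ho$ and $\V(\I,\D)$.

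For $\Ho$ I would argue as follows. Suppose $\Ho\subseteq\C_1\c\cdots\c\C_k$ with every $\C_i\subsetneq\Ho$. Each $\C_i$ avoids some $\tau_i\in\Ho$; by Observation~\ref{obs_alt} there is an alternating $\eta_i\in\Ho$ with $\tau_i\le\eta_i$, and since the alternating permutations of $\Ho$ form a chain under containment there is a single alternating $\eta$ with $\tau_i\le\eta$ for all $i$. Then every $\C_i$ avoids $\eta$, so $\C_i\subseteq\C^{*}:=\Ho\cap\Av(\eta)$ and hence $\Ho\subseteq(\C^{*})^k$. The key step is a counting estimate: a permutation of $\Ho$ of order $N$ is a shuffle of the increasing word $1\,2\cdots t$ with the increasing word $(t+1)\cdots N$ for some threshold $t$, and it contains $\eta$ (say of length $2m+1$) exactly when the binary string that records, entry by entry, whether the value is $\le t$ or $>t$ has at least $2m+1$ maximal runs; so a permutation of $\C^{*}$ of order $N$ is determined by $N$, by the threshold $t\in\{0,\dots,N\}$, and by the fewer than $2m$ run boundaries of that string, whence $|\C^{*}\cap\S_N|=O(N^{c})$ for a constant $c=c(\eta)$. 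Then $(\C^{*})^k$ has at most $O(N^{ck})$ permutations of order $N$, whereas fixing $t=\lfloor N/2\rfloor$ and varying the shuffle over all placements of the small word shows $|\Ho\cap\S_N|\ge\binom{N}{\lfloor N/2\rfloor}$, which is super-polynomial; for $N$ large this is a contradiction.

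For $\V(\I,\D)$ the same skeleton works with the chain of alternating permutations replaced by the explicit cofinal chain $U_m=1,3,5,\dots,2m-1,2m,2m-2,\dots,4,2$ (odd values increasing, then even values decreasing), which lies in $\V(\I,\D)$. I would verify that $U_m$ is cofinal: given $\pi\in\V(\I,\D)$ of order $n$ whose increasing run uses the value set $S$, sending the $v$-th smallest value of $\pi$ to the entry $2v-1$ of $U_n$ when $v\in S$ and to the entry $2v$ otherwise exhibits $\pi\le U_n$. A permutation of $\V(\I,\D)$ of order $N$ is determined by $S$ up to the at most $N+1$ choices of where the increasing run ends, and it contains $U_m$ exactly when the binary string recording which run each \emph{value} belongs to has at least $2m$ maximal runs; so $\V(\I,\D)\cap\Av(U_m)$ again has only polynomially many permutations of each order, while $|\V(\I,\D)\cap\S_N|\ge 2^N/(N+1)$. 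The argument then concludes exactly as for $\Ho$.

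I expect the main obstacle to be the counting estimate itself: identifying the right complexity parameter of a permutation in each class (the number of maximal runs of the appropriate position- or value-coloring), proving that avoiding the cofinal element forces that parameter to be bounded, and proving that a bounded parameter permits only polynomially many permutations of each order. The symmetry reduction, the passage from the subclasses $\C_i$ to the single class $\C^{*}$, and the final polynomial-versus-super-polynomial comparison should all be routine; the one additional point needing care is the cofinality of the chains $U_m$ in the second orbit, since for the first orbit this is furnished directly by Observation~\ref{obs_alt}.
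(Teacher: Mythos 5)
Your argument is correct in substance, but it takes a genuinely different route from the paper's in its second half. The paper also begins by cutting every proper subclass down to $\C=\Ho\cap\Av(\eta)$ for a single alternating $\eta$ (Observation \ref{obs_alt}), but then argues structurally: avoiding $\eta$ forces every member of $\C$ to be an $N$-block for a constant $N$, Lemma \ref{lemma_blocks} shows that $(\C)^k$ consists of $(N^k)$-blocks, and a long alternating permutation of $\Ho$ is not an $(N^k)$-block; this is run separately for each horizontal merge and transferred to the vertical merges by inversion via Corollary \ref{cor_inversion}. You exploit the same ``bounded number of runs'' consequence of avoiding a cofinal pattern, but convert it into a polynomial upper bound on $|\C\cap\S_N|$ and finish with the product bound $|(\C)^k\cap\S_N|\le|\C\cap\S_N|^k$ against the exponential size of $\Ho$ resp.\ $\V(\I,\D)$, in the spirit of Theorems \ref{l2} and \ref{f2}. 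What this buys is that Lemma \ref{lemma_blocks} is not needed and that you can reduce to two representatives rather than four, because the only composition-specific fact in a counting argument is the cardinality product bound, which is insensitive to reverse and complement. That last point is legitimate but should be made explicit, since composability itself is not known to be preserved by $r$ and $c$ (cf.\ Lemma \ref{lemma_symcomp} and the open question in the conclusion): the safe formulation is, for a symmetric image $\X=\phi(\Ho)$, to pull each proper subclass $\C_i\subsetneq\X$ back through $\phi$, apply the polynomial count to $\phi^{-1}(\C_i)\subseteq\Ho\cap\Av(\eta)$, note that $\phi$ preserves the counts, and only then invoke the product bound.

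Two small repairs. First, the alternating permutations of $\Ho$ do not literally form a chain (for instance $1324$ and $2314$ are incomparable alternating members of $\Ho$ of length $4$); what you need, and what Observation \ref{obs_alt} together with the canonical family $1,\,m{+}2,\,2,\,m{+}3,\dots$ actually gives, is that finitely many elements of $\Ho$ are contained in a single alternating permutation. Second, your ``exactly when'' biconditionals relating containment of $\eta$ resp.\ $U_m$ to the number of maximal runs are off by a small additive constant and depend on which merge representation of the permutation is used; the statement you actually need, and which is true, is one-sided: any representation whose colouring string has at least $2m+2$ runs yields a copy of the cofinal pattern, so avoidance bounds the number of runs of every representation by a constant, and that is all the polynomial count requires.
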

\begin{proof}
We will show the proof for the class $\Ho$, the same approach can be applied to every mentioned 
horizontal merge and the result is transferred by inversion to the vertical merges by to Corollary \ref{cor_inversion}.

Suppose that $\Ho$ is composable from its proper subclasses $\C_1, \ldots, \C_k$. Each of $\C_1, \ldots, \C_k$
avoids a permutation of $\Ho$, thus according to Observation \ref{obs_alt} there is an alternating
permutation $\eta \in \Ho$ such that $\C_i \subseteq \Av(\eta)$ and therefore if $\C = \Av(\eta) \cap \Ho \subsetneq \Ho$
we have $\Ho \subseteq (\C)^k$.

Any permutation $\pi \in \C$ is merged from two sequences $a$ and $b$ of consecutive integers.
We label elements of $\pi$ by $a$ or $b$ depending on which sequence they belong to.
A sequence of elements with alternating labels forms a~copy of an alternating permutation in $\pi$.
The length of the longest sequence of alternating labels in $\pi$ is thus limited by a~constant $N$ determined
by the order of $\eta$, thus $\pi$ can be broken into at most $N$ contiguous parts each having one label.
Since elements labeled with a~single label form a~sequence of consecutive integers, this
implies that $\pi$ is in fact an $N$-block. Since the choice of $\pi$ was arbitrary,
every permutation of $\C$ is an $N$-block and by Lemma \ref{lemma_blocks} every permutation of $(\C)^k$
is an $(N^k)$-block. But a~long enough alternating permutation from $\Ho$ is not an $(N^k)$-block,
therefore $\Ho \nsubseteq (\C)^k$ and the proof is finished.
\end{proof}

\section{Conclusion}

This paper studies the previously unexplored concept of composability of permutation classes.
Given a~permutation class, our main goal is to show, how it can be constructed using smaller permutation classes
and the composition operator, or to prove that this cannot be done. Throughout the paper,
we present both types of results. 

On the positive side, Theorems \ref{thm_Ik} and \ref{thm_k+l-1}
show two distinct ways of constructing the class $\Av(k\cdots 21)$, 
Theorem \ref{l4} provides infinitely many examples of
classes of layered patterns which can be constructed from simpler subclasses
and Theorem \ref{thm_52} shows that many principal classes
avoiding a~decomposable pattern are composable.

On the negative side, in Theorems \ref{l2}, \ref{f2}, \ref{l3} and \ref{l}
we present four different classes of layered patterns which cannot be constructed from any number
of proper subclasses using composition, and Proposition \ref{VHstuff} provides us with
8 more examples of uncomposable classes.

Composability is similar to splittability in that both these properties describe
how a~bigger class is built from smaller ones. We do not know whether these
two properties are somehow connected; however, our research suggests that this may be the case,
since every composable class we have found so far is also splittable. We have
found examples of splittable yet uncomposable classes, namely the classes $\La_2$ and $\La_3$
introduced in Section \ref{ch4}. The class of all layered permutations is an example of a~both uncomposable
and unsplittable class. The last case remains open and we pose it as a~question for future work.

\begin{question}
Is there a~permutation class which is composable and unsplittable?
\end{question}

In Section 3 we showed that if a class is composable and avoids an increasing pattern, then its reverse
and complement are composable. It remains open whether the avoidance condition is necessary.

\begin{question}
Is there a composable class $\A$ such that $\A^r$ or $\A^c$ is not composable?
\end{question}

Splittability has the property that if a class is splittable, it can be split into two parts. This is not the case
for composability as we showed in
Section 4 where we proved that the class $\La_k$ for $k \geq 4$ is $3$-composable but not 2-composable.

\begin{question}
Is there a $4$-composable class which is not $3$-composable? More generally, 
is there a universal constant $K$ such that every composable class is $K$-composable?
\end{question}

Our work may find applications in enumerating permutation classes. Denote by $\gr(\A)$ the growth
rate of the class $\A$ as defined e.g. in \cite{Vatter15}.
It is not difficult to see that if $\C \subseteq \A \c \B$, then $\gr(\C) \leq \gr(\A) \cdot \gr(\B)$.
Using this observation one could try to find upper bounds for growth rates of composable permutation classes.

\section{Acknowledgement}
I am very grateful to Vít Jelínek for many consultations, valuable advice and proofreading of this work.
%%% Bibliography
%%% Bibliography (literature used as a source)
%%%
%%% We employ bibTeX to construct the bibliography. It processes
%%% citations in the text (e.g., the \cite{...} macro) and looks up
%%% relevant entries in the bibliography.bib file.
%%%
%%% The \bibliographystyle command selects, which style will be used
%%% for references from the text. The argument in curly brackets is
%%% the name of the corresponding style file (*.bst). Both styles
%%% mentioned in this template are included in LaTeX distributions.

%\bibliographystyle{plainnat}    %% Author (year)
%\bibliographystyle{unsrt}    %% Author (year)
\bibliographystyle{abbrv}

\renewcommand{\bibname}{References}

%%% Generate the bibliography. Beware that if you cited no works,
%%% the empty list will be omitted completely.

\bibliography{bibliography}

\begin{thebibliography}{10}

\bibitem{AlbertEtAl07}
M.~H. Albert, R.~E.~L. Aldred, M.~D. Atkinson, H.~P. van Ditmarsch, C.~C.
  Handley, D.~A. Holton, and D.~J. McCaughan.
\newblock Compositions of pattern restricted sets of permutations.
\newblock {\em Australasian Journal Of Combinatorics}, 37:43--56, 2007.

\bibitem{Atkinson99}
M.~D. Atkinson.
\newblock Restricted permutations.
\newblock {\em Discrete Mathematics}, 195, 1999.

\bibitem{AtkinsonBeals01}
M.~D. Atkinson and R.~Beals.
\newblock Permutation involvement and groups.
\newblock {\em Quarterly Journal of Mathematics}, 52:415--421, 2001.

\bibitem{AtkinsonStitt02}
M.~D. Atkinson and T.~Stitt.
\newblock Restricted permutations and the wreath product.
\newblock {\em Discrete Mathematics}, 259:19--36, 2002.

\bibitem{AtkinsonBeals99}
M.~D. Atkinsons and R.~Beals.
\newblock Permuting mechanisms and closed classes of permutations.
\newblock In C.~S. Calude and M.~J. Dinneen, editors, {\em Combinatorics,
  Computation \& Logic. Proceedings of DMTCS'99 and CATS'99}, volume 21 (3),
  pages 117--127. ACS Communications, Springer, 1999.

\bibitem{JelinekValtr13}
V.~Jelínek and P.~Valtr.
\newblock Splittings and {R}amsey properties of permutation classes.
\newblock {\em Advances in Applied Mathematics}, 63:41--67, 2015.

\bibitem{KaiserKlazar03}
T.~Kaiser and M.~Klazar.
\newblock On growth rates of closed permutations classes.
\newblock {\em Electronic Journal of Combinatorics}, 9, 2003.

\bibitem{Lehtonen16}
E.~Lehtonen.
\newblock {P}ermutation groups arising from pattern involvement.
\newblock {\em \\ arXiv:1605.05571}, 2016.

\bibitem{LehtonenPoschel16}
E.~Lehtonen and R.~P\"oschel.
\newblock Permutation groups, pattern involvement, and {G}alois connections.
\newblock {\em arXiv:1605.04516}, 2016.

\bibitem{Vatter15}
V.~Vatter.
\newblock Permutation {C}lasses.
\newblock {\em The Handbook of Enumerative Combinatorics}, pages 753--834,
  2015.

\end{thebibliography}

%%% If case you prefer to write the bibliography manually (without bibTeX),
%%% you can use the following. Please follow the ISO 690 standard and
%%% citation conventions of your field of research.

% \begin{thebibliography}{99}
%
% \bibitem{lamport94}
%   {\sc Lamport,} Leslie.
%   \emph{\LaTeX: A Document Preparation System}.
%   2nd edition.
%   Massachusetts: Addison Wesley, 1994.
%   ISBN 0-201-52983-1.
%
% \end{thebibliography}

\end{document}